\renewcommand{\pod}[1]{\allowbreak\mathchoice
  {\if@display \mkern 18mu\else \mkern 8mu\fi (#1)}
  {\if@display \mkern 18mu\else \mkern 8mu\fi (#1)}
  {\mkern4mu(#1)}
  {\mkern4mu(#1)}
}
\renewcommand{\eqref}[1]{(\ref{#1})}   
\theoremstyle{plain}
\newtheorem{theorem}{Theorem}[section]
\newtheorem{corollary}{Corollary}[section]
\newtheorem{lemma}{Lemma}[section]
\newtheorem{proposition}{Proposition}[section]
\newtheorem{algorithm}{Algorithm}
\newtheorem{definition}{Definition}
\theoremstyle{remark}
\begin{document}

\title{Level of distribution of unbalanced convolutions}
\date{\today}
\author{\'Etienne Fouvry}
\address{ Univ. Paris--Sud, Institut de Math\' ematique d'Orsay, UMR 8628, Orsay, F--91405 France, CNRS, Orsay, F--91405, France}
\email{Etienne.Fouvry@u-psud.fr}
\author{Maksym Radziwi\l\l}
\address{Department of Mathematics \\
  Caltech \\
  1200 E California Blvd
  Pasadena, CA, 91125, USA  }
\email{maksym.radziwill@gmail.com}
\keywords{equidistribution in arithmetic progressions, dispersion method}
\subjclass[2010]{Primary 11N69; Secondary 11N25}

\begin{abstract} We show that if an essentially arbitrary sequence supported on an interval containing $x$ integers, is convolved with a tiny Siegel-Walfisz-type sequence supported on an interval containing $\exp((\log x)^{\varepsilon})$ integers then the resulting multiplicative convolution has (in a weak sense) level of distribution $x^{1/2 + 1/66 - \varepsilon}$ as $x$ goes to infinity. This dispersion estimate has a number of consequences for: the distribution of the $k$th divisor function to moduli $x^{1/2 + 1/66 - \varepsilon}$ for any integer $k \geq 1$, the distribution of products of exactly two primes in arithmetic progressions to large moduli, the distribution of sieve weights of level $x^{1/2 + 1/66 - \varepsilon}$ to moduli as large as $x^{1 - \varepsilon}$ and for the Brun-Titchmarsh theorem for almost all moduli $q$ of size $x^{1 - \varepsilon}$, lowering the long-standing constant $4$ in that range. Our result improves and is inspired by earlier work of Green (and subsequent work of Granville-Shao) which is concerned with the distribution of $1$-bounded multiplicative functions in arithmetic progressions to large moduli. As in these previous works the main technical ingredient are the recent estimates of Bettin-Chandee for trilinear forms in Kloosterman fractions and the estimates of Duke-Friedlander-Iwaniec for bilinear forms in Kloosterman fractions.  

   \end{abstract}

\maketitle

\renewcommand{\theenumi}{(\roman{enumi})}



 
\section{Introduction}

 A major open problem in prime number theory is to show  the existence of some $\delta >0$ such that for any integer $a \neq 0$ and for any $A>0$ we have   
 \begin{equation} \label{eq:goal}
\sum_{\substack{q \leq x^{1/2 + \delta}\\ (q,a) = 1}} \Big | \sum_{\substack{p \leq x \\ p \equiv a \pmod{q}}} 1 - \frac{1}{\varphi(q)} \sum_{\substack{p \leq x \\ (p,q) = 1}} 1 \Big | \ll_{a,A} x (\log x)^{-A}
\end{equation}
uniformly for $x\geq 2$.
We would then say that \textit{primes have a level of distribution} $x^{1/2 + \delta}$ \textit{in a weak sense}, and call $\tfrac{1}{2} + \delta$ an \textit{exponent of distribution of the primes in a weak sense}. If we could establish a similar statement but with the maximum over $(a,q) = 1$ inside the sum over $q$ we would then drop the \textit{weak} adjective (see \cite{Fo-Ma} for a precise definition). For brevity we will not further distinguish between the two terms since we will be only concerned with the former ``weak sense''

By Chebyschev's inequality \eqref{eq:goal} implies that for ``almost all'' (i.e all with the exception of a density zero subset) moduli $q \leq x^{1/2 + \delta}$ the primes are well-distributed in arithmetic progressions $n \equiv a \pmod{q}$. 

The inequality \eqref{eq:goal} follows for $\delta < 0$ from the Bombieri-Vinogradov theorem, and for $\delta = 0$ and $A < 2$ from work of Bombieri-Friedlander-Iwaniec \cite{B-F-I2}. Zhang \cite{Zhang} established \eqref{eq:goal} for some $\delta > 0$ with $q$ restricted to $x^{\varepsilon}$ smooth moduli (see also \cite{Polymath}). The problem of establishing \eqref{eq:goal} for some $\delta > 0$ is challenging since it lies beyond the capability of the Generalized Riemann Hypothesis. 

Underpinning any current approach to \eqref{eq:goal} are \textit{dispersion estimates} originally invented by Linnik. Roughly a dispersion estimate asserts that for $M,N \geq 1$ and two arbitrary sequences $\boldsymbol \alpha = (\alpha_{m})_{M < m \leq 2M}$ and $\boldsymbol \beta = (\beta_{n})_{N < n \leq 2N}$ of complex numbers satisfying some minor technical conditions, we have for any $a \neq 0$ fixed, 
\begin{equation} \label{eq:goal2}
\sum_{\substack{Q \leq q \leq 2Q \\ (q,a) = 1}} \Big | \sum_{\substack{x < m n \leq 2 x \\ m n \equiv a \pmod{q}}} \beta_{m} \gamma_{n} - \frac{1}{\varphi(q)} \sum_{\substack{x < m n \leq 2 x \\ (m n, q) = 1 }} \beta_{m} \gamma_{n} \Big | \ll_{a,A} x (\log x)^{-A} 
\end{equation}
for $x \asymp M N$, uniformly in $Q \leq x^{1/2 + \delta}$ for some $\delta > 0$. As usual the case of $\delta < 0$ falls within the scope of techniques related to the Bombieri-Vinogradov theorem, and is well-understood (see \cite[Theorem 0]{BFI1} or \cite[Theorem 9.16]{Opera}).

A necessary assumption in a dispersion estimate is that at least one of the sequences is well-distributed in arithmetic progressions having small moduli. This is referred to as a \textit{Siegel-Walfisz condition}. 
\begin{definition}
  We say that a sequence $\boldsymbol \beta = (\beta_n)$ of complex numbers satisfies a Siegel-Walfisz condition (alternatively we also say that $\boldsymbol \beta$ is \textit{Siegel-Walfisz}), if there exists an integer $k > 0$ such that for any fixed $A > 0$, uniformly in $x \geq 2$, $q > |a| \geq 1, r \geq 1$ and $(a,q) = 1$, we have,
  \begin{equation} \label{cond}
  \sum_{\substack{x < n \leq 2x \\ n \equiv a \pmod{q} \\ (n,r) = 1}} \beta_n = \frac{1}{\varphi(q)} \sum_{\substack{x < n \leq 2x \\ (n, q r) = 1}} \beta_{n} + O_{A}(\tau_k(r) \cdot x (\log x)^{-A}). 
  \end{equation}
  where $\tau_{k}(n)$ is the $k$-th divisor function $\tau_k(n) := \sum_{n_1 \ldots n_k = n} 1$. 
\end{definition}

For $\delta > 0$ there are few results that address \eqref{eq:goal} in wide generality. As we already mentioned at least one of the sequence $\boldsymbol \alpha$, $\boldsymbol \beta$ needs to be Siegel-Walfisz. In all the cases that are known (i.e \cite[Theorem 3]{BFI1}, \cite[Th\'eor\`eme 1]{FoActaMath} and \cite[Corollaire 1]{FoAnnENS}) the Siegel-Walfisz sequence needs to be supported on an interval of length at least $x^{\varepsilon} \cdot (Q / \sqrt{x} + 1)^{2}$ (and no longer than say $x^{1/6 - \varepsilon}$ or $x^{1/12 - \varepsilon}$). In particular the length of this interval is at least a power of $x$ as soon as $Q$ increases beyond $\sqrt{x}$ by a small power of $x$. 


Our first result is a new dispersion estimate that roughly shows that \eqref{eq:goal2} can be obtained with $Q = x^{1/2 + 1/66 - \varepsilon}$ even if the Siegel-Walfisz sequence $\boldsymbol \beta$ is supported on a tiny interval of length $\exp((\log x)^{\varepsilon})$ for any sufficiently small $\varepsilon > 0$. We find this rather striking, since this means that a tiny smoothing of an otherwise arbitrary sequence supported on $x$ integers allows one to suddenly reach a level of distribution $x^{1/2 + 1/66 - \varepsilon}$. We call such a convolution of two sequences of drastically different sizes an \textit{unbalanced convolution}.  



\begin{corollary} \label{cor:Main}
  Let $k > 0$ and $\varepsilon > 0$ be given. Let $\boldsymbol \alpha = (\alpha_{m})_{M < m \leq 2M}$ and $\boldsymbol \beta = (\beta_{n})_{N < n \leq 2N}$ be two sequences of complex numbers such that $|\alpha_{m}| \leq \tau_k(m)$ and $|\beta_{n}| \leq \tau_k(n)$ for all $m,n \geq 1$. Suppose that $\boldsymbol \beta$ is Siegel-Walfisz. Then for every $A > 0$, uniformly in $M,N \geq 2$ with $M N / 2 \leq x \leq 4 M N$ we have, 
  \begin{equation} \label{eq:dispmain}
  \sum_{\substack{Q \leq q \leq 2Q \\ (q,a) = 1}} \Big | \sum_{\substack{x < m n \leq 2 x \\ m n \equiv a \pmod{q}}} \alpha_{m} \beta_n - \frac{1}{\varphi(q)} \sum_{\substack{x < m n \leq 2 x \\ (m n, q) = 1}} \alpha_m \beta_{n} \Big | \ll_{A} x (\log x)^{-A}
  \end{equation}
  provided that either of the following three conditions holds
  \begin{enumerate}
  \item $\exp((\log x)^{\varepsilon}) \leq N \leq Q^{- 11 / 12} \cdot x^{17/36 - \varepsilon}$ and $1 \leq |a| \leq x /12$.
  \item $\exp((\log x)^{\varepsilon}) \leq N \leq x^{7/90 - \varepsilon}$, $Q \leq x^{53/105 - \varepsilon}$ and $1 \leq |a| \leq x/12$.
  \item $\exp((\log x)^{\varepsilon}) \leq N \leq x^{101/630 - \varepsilon}$, $Q \leq x^{53/105 - \varepsilon}$ and $1 \leq |a| \leq (x/4)^{\varepsilon / 1000}$. 
    \end{enumerate}
  \end{corollary}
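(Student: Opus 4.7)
Since the result is labelled a \emph{corollary}, I expect it to be deduced from a master dispersion theorem stated later in the paper, whose proof follows Linnik's dispersion method combined with the Bettin--Chandee and Duke--Friedlander--Iwaniec bounds on Kloosterman fractions advertised in the abstract. The bulk of the work lies in that master theorem; the corollary itself is a parameter optimization repeated in each of the three regimes.

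For the master estimate, I would first dualize \eqref{eq:dispmain} by introducing unimodular coefficients $c_q$ with $|c_q|=1$ to remove the absolute values, and then apply Cauchy--Schwarz in the $m$--variable to eliminate the arbitrary sequence $\boldsymbol\alpha$. Using $\sum_m |\alpha_m|^2 \ll M(\log x)^{O(1)}$, one reduces matters to estimating
\[
\Sigma \;=\; \sum_{M<m\le 2M}\Bigl|\sum_{Q\le q\le 2Q}c_q\Bigl(\sum_{\substack{N<n\le 2N\\ mn\equiv a\,(q)}}\beta_n-\frac{1}{\varphi(q)}\sum_{\substack{N<n\le 2N\\ (n,q)=1}}\beta_n\Bigr)\Bigr|^2.
\]
Expanding the square produces a quadruple sum over $(q_1,q_2,n_1,n_2)$ weighted by $\beta_{n_1}\bar\beta_{n_2}$, with an inner sum in $m$ counting residues modulo $[q_1,q_2]$. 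One then splits off the diagonal $n_1=n_2$, whose cancellation against the $\varphi(q)^{-1}$--terms is handled by the Siegel--Walfisz hypothesis \eqref{cond} on $\boldsymbol\beta$. On the off--diagonal I would apply Poisson summation in $m$ modulo $[q_1,q_2]$; the $h=0$ frequency cancels exactly against the complementary terms, and the nonzero frequencies produce Kloosterman fractions of the shape
\[
e\!\Bigl(\frac{a\overline{n_1}\,h\,q_2-a\overline{n_2}\,h\,q_1}{q_1q_2}\Bigr),
\]
weighted by a smooth kernel in $h$ of length roughly $Q^2/M$.

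The resulting multilinear exponential sum in $(h,n_1,n_2,q_1,q_2)$ is then treated by the Bettin--Chandee trilinear bound in $(h,n_1,n_2)$ when enough averaging is available, and by the Duke--Friedlander--Iwaniec bilinear bound when one of the variables has to be frozen. The three hypotheses of Corollary~\ref{cor:Main} correspond to three different balances between these inputs: in (i) the Bettin--Chandee bound is used throughout and tolerates $|a|\le x/12$; (ii) shrinks $N$ in exchange for the larger modulus range $Q\le x^{53/105-\varepsilon}$; and (iii) exploits a tiny $|a|\le(x/4)^{\varepsilon/1000}$ to relax the constraint on $N$ further at the same level of $Q$. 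In each case, matching the Poisson--plus--Kloosterman bound against $x(\log x)^{-A}$ reduces to a polynomial inequality among $M,N,Q,|a|$ which directly yields the exponents $17/36$, $7/90$, $101/630$, and $53/105$.

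The principal obstacle, in my view, is controlling the off--diagonal when $N$ is as small as $\exp((\log x)^\varepsilon)$: the averaging in $n_1,n_2$ is essentially trivial, so the Bettin--Chandee bound must be applied in a near--degenerate regime in which the $h$--averaging alone has to supply the cancellation. Simultaneously one must invoke \eqref{cond} carefully to strip the small common factors $(n_i,q_i)$ from the off--diagonal and to absorb the resulting loss into the permitted error $\tau_k(r)\cdot x(\log x)^{-A}$. This interplay between Poisson truncation, trilinear Kloosterman input, and the Siegel--Walfisz correction is what ultimately pins down the three admissible regimes.
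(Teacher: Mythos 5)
Your outline captures the broad structure of the master dispersion estimate (unimodular $c_q$, Cauchy--Schwarz in $m$, expand the square, Poisson in $m$, Kloosterman fractions, Bettin--Chandee / Duke--Friedlander--Iwaniec), and you correctly identify that the Siegel--Walfisz hypothesis handles the near--diagonal. But there are two genuine gaps relative to the paper's argument.

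First, you assert that all three parts of the corollary arise by ``balancing'' the trilinear and bilinear Kloosterman inputs differently inside the same master theorem. This is not how parts (ii) and (iii) are obtained. Theorem~\ref{Generalresult} only yields part (i) of Proposition~\ref{pr:prep}, and it genuinely degrades as $N$ grows (your claim that (ii) ``shrinks $N$'' is backwards: (ii) and (iii) allow \emph{larger} $N$ than (i) at the cost of a slightly smaller $Q$). The paper proves (ii) and (iii) by splitting the $N$--range: for $N\le x^{1/105-\varepsilon}$ it invokes part (i), and for intermediate $N$ it imports, unchanged, the earlier dispersion theorems of Fouvry (\cite[Corollaire~1]{FoAnnENS} for (ii), \cite[Th\'eor\`eme~1]{FoActaMath} for (iii), the latter explaining the artificially tiny $|a|$--range there). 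Without these external inputs the exponents $7/90$ and $101/630$ are not reachable by the Bettin--Chandee machinery, since the diagonal in the $n_1,n_2$--sum becomes too costly as $N$ grows.

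Second, the corollary is not the same statement as the master dispersion estimate: it carries the multiplicative constraint $x<mn\le 2x$, which neither Theorem~\ref{Generalresult} nor Proposition~\ref{pr:prep} has. The paper removes this constraint by approximating the sharp cutoff $\mathbf{1}_{x<mn\le 2x}$ by a smooth $f(mn/x)$ with derivative bounds of size $(\log x)^{O(1)}$, bounding the boundary error via Shiu's theorem, and then Mellin--inverting $f$ so that the twisted sequences $\alpha_m m^{-it}$, $\beta_n n^{-it}$ (for $|t|\ll(\log x)^{O(1)}$) fall within the scope of Proposition~\ref{pr:prep} with a Siegel--Walfisz hypothesis preserved after integration by parts. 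Your sketch omits this step entirely; it is not cosmetic, as the constraint couples $m$ and $n$ and must be removed before Cauchy--Schwarz in $m$ is viable.
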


Note that the left-hand side of \eqref{eq:dispmain} is identically zero if $x$ falls outside of the interval $[M N / 2, 4 M N]$. Here $i)$ gives the strongest estimate in the $Q$-aspect for very small $N$, allowing for $Q$ to go up to $x^{1/2 + 1/66 - 3 \varepsilon}$ provided that $N \leq x^{\varepsilon}$, where-as $ii)$ and $iii)$ give stronger uniformity in the $N$-aspect at the price of a slightly weaker level of distribution. 

A numerically stronger, but conditional, version of Corollary \ref{cor:Main} appears in Fouvry's thesis \cite{FoTh}. Fouvry's result depends on the assumption of the still unproven Hooley's $R^{\star}$--conjecture on cancellations in short incomplete Kloosterman sums. To obtain the unconditional Corollary \ref{cor:Main} we appeal instead to results of Duke-Friedlander-Iwaniec \cite{D-F-I} and Bettin-Chandee \cite{Be-Ch}. These results can be used as unconditional substitutes for Hooley's $R^{\star}$--conjecture ``on average''. A similar observation is implicit in the recent work of Green \cite{Green} which is the second starting point for our work.


  %

Our dispersion estimate has a number of interesting corollaries, many of them relying on the observation that most integers $n$ can be factored as $n = p m$ with $p$ a small prime in the range $[\exp((\log x)^{\varepsilon}), x^{\varepsilon}]$. The first corollary concerns the distribution of the $k$th divisor function in arithmetic progressions to large moduli.

\begin{corollary}\label{dk} Fix an integer  $k\geq 1$ and $\varepsilon >0$. Then uniformly for $x \geq 2$, $Q\leq x^{\frac{17}{33} - \varepsilon}$ and $1 \leq \vert a \vert \leq x/12$, one has the inequality
\begin{equation} \label{eq:dk}
\sum_{\substack{Q \leq q \leq 2Q \\ (q,a)=1}} \Big | \sum_{\substack{x < n \leq 2x \\ n \equiv a \pmod{q}}} \tau_k(n) - \frac{1}{\varphi(q)} \sum_{\substack{x < n \leq 2x \\ (n,q) = 1}} \tau_k(n) \Big | \ll_{\varepsilon} \frac{x}{(\log x)^{1-\varepsilon}}.
\end{equation}
\end{corollary}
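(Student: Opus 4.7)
Set $\varepsilon_{0} := \varepsilon/(Ck)$ with $C$ a sufficiently large absolute constant, and put $P_{1} := \exp((\log x)^{\varepsilon_0})$, $P_{2} := x^{\varepsilon_{0}}$, $I := [P_{1}, P_{2}]$. The numerical identity $(17/33)\cdot(11/12) = 17/36$ shows that Corollary \ref{cor:Main}(i) applies to convolutions $\alpha\ast\beta$ with $|\alpha|,|\beta|\leq\tau_{k}$, $\beta$ supported on $I$, and $Q\leq x^{17/33-\varepsilon}$; indeed the admissibility condition $N\leq Q^{-11/12}x^{17/36-\varepsilon}$ becomes $N\leq x^{O(\varepsilon)}$ in this range, which is satisfied by our $P_{2}$.

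The starting point is the Dirichlet convolution identity
\[
\tau_{k}(n)\log n \;=\; k\,(\Lambda\ast\tau_{k})(n),
\]
obtained by differentiating $\zeta(s)^{k}$ and using $\zeta'/\zeta = -\sum_{n}\Lambda(n)/n^{s}$. Split $\Lambda = \Lambda_{I} + (\Lambda - \Lambda_{I})$, where $\Lambda_{I} := \Lambda\cdot\mathbf{1}_{I}$. For the main term, $\Lambda_{I}$ coincides with $\log p\cdot\mathbf{1}_{p\text{ prime in }I}$ up to a negligible $O(P_{2}^{1/2})$-contribution from higher prime powers, and is Siegel-Walfisz by the classical theorem for primes in arithmetic progressions. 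Applying Corollary \ref{cor:Main}(i) with $\beta=\Lambda_{I}$ and $\alpha=\tau_{k}$, one obtains
\[
\sum_{\substack{Q\leq q\leq 2Q\\(q,a)=1}}\bigl|\Delta(\Lambda_{I}\ast\tau_{k};q,a)\bigr|\;\ll_{A}\;x(\log x)^{-A}
\]
for every $A>0$, and Abel summation over dyadic ranges of $n$ removes the $\log n$ factor with a harmless $\log x$ loss.

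The residual piece $(\Lambda-\Lambda_{I})\ast\tau_{k} = \Lambda_{\leq P_{1}}\ast\tau_{k} + \Lambda_{>P_{2}}\ast\tau_{k}$ is handled by iterating the identity inside the inner divisor factor. For $d\leq P_{1}$ one applies $\tau_{k}(n/d)\log(n/d)=k(\Lambda\ast\tau_{k})(n/d)$ to extract another prime in $I$ dividing $n/d$; for $d>P_{2}$ one swaps $d\leftrightarrow m:=n/d<2x/P_{2}$ so that $m$ plays the role of the short variable in the convolution, and iterates. After $O_{k}(1/\varepsilon)$ iterations, every surviving term is either directly handled by Corollary \ref{cor:Main} or supported on ``rough'' integers $n\in(x,2x]$ with no prime factor in $I$. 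By Selberg-Delange,
\[
\sum_{\substack{n\leq 2x\\(n,\prod_{p\in I}p)=1}}\tau_{k}(n)\;\ll\;x(\log x)^{k-1}\prod_{p\in I}\!\bigl(1-1/p\bigr)^{k}\;\ll\;x(\log x)^{-1+O(k\varepsilon_{0})},
\]
and their contribution to $\sum_{q}|\Delta(\tau_{k};q,a)|$ is absorbed into $x/(\log x)^{1-\varepsilon}$ by combining this mass estimate with the crude bound $\sum_{q\in[Q,2Q]}\mathbf{1}_{q\mid n-a}\leq\min(\tau(n-a),\,2x/Q+1)$ and Cauchy-Schwarz against the second moment $\sum_{n \text{ rough}}\tau_{k}(n)^{2}\ll x(\log x)^{-1+O(k^{2}\varepsilon_{0})}$.

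The \textbf{main obstacle} is the rough-integer bookkeeping: one must verify that the iterative decomposition terminates after $O_{k}(1/\varepsilon)$ steps, and that the accumulated loss from repeatedly dividing by $\log(n/d)$ and from the Cauchy-Schwarz bound on the rough contribution stays within the target $(\log x)^{\varepsilon}$-margin. Choosing $C=C(k)$ large enough in the definition of $\varepsilon_{0}$ makes the rough-integer exponent $k^{2}\varepsilon_{0}$ sufficiently small, and the cumulative error fits into $x/(\log x)^{1-\varepsilon}$ upon harmless relabeling of $\varepsilon$.
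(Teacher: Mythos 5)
Your route is genuinely different from the paper's: the paper deduces Corollary~\ref{dk} directly from Corollary~\ref{cor:mult}, whose proof isolates a ``good'' set $\mathcal{S}$ of integers having exactly one prime factor in a short interval $\mathcal{I}_\ell\subset\mathcal{J}$, factors $n=bpc$ uniquely on $\mathcal{S}$, and applies Corollary~\ref{cor:Main}(i), while the complement is discarded via a Shiu-type bound in progressions. You instead use the Vaughan/Linnik identity $\tau_k(n)\log n=k(\Lambda*\tau_k)(n)$, which is an attractive idea, and your numerics for the admissibility of Corollary~\ref{cor:Main}(i) are fine. However, there are two genuine gaps.

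The first and most concrete is your treatment of the rough integers. You propose to bound their contribution via Cauchy--Schwarz against $\tau(n-a)$ together with the pointwise bound $\sum_{q\sim Q}\mathbf{1}_{q\mid n-a}\leq\min(\tau(n-a),2x/Q+1)$. This is too lossy: for $Q$ near $x^{1/2+1/66}$ one has $\sum_{n\sim x}\tau(n-a;Q,2Q)^2\gg Q^2\gg x$, so even with the ``rough'' second moment $\ll x(\log x)^{-1+O(k^2\varepsilon_0)}$ the Cauchy--Schwarz product is of order $Q\sqrt{x}(\log x)^{O(1)}\gg x$, worse than the trivial bound; the $2x/Q$-route fares no better since $(x/Q)\cdot x(\log x)^{-1+O(k\varepsilon_0)}\gg x^{3/2-\varepsilon}$. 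The correct device — used in the paper's Lemma~\ref{le:trivialS} — is to apply Shiu's theorem (Lemma~\ref{le:shiu}) directly to the progression-restricted sum, giving
\[
\sum_{\substack{n\sim x,\ n\text{ rough}\\ n\equiv a\,(q)}}\tau_k(n)\ \ll\ \frac{x}{\varphi(q)}(\log x)^{k\varepsilon_0-1},
\]
after which $\sum_{q\sim Q}\varphi(q)^{-1}\ll\log\log x$ closes the estimate. Without the $1/\varphi(q)$ gain from Shiu, the sum over $q\sim Q$ with $Q>\sqrt{x}$ cannot be controlled.

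The second issue is the iteration on $(\Lambda-\Lambda_I)*\tau_k$. Extracting primes $d\leq P_1$ reduces the size of $n/d$ by at most $P_1=\exp((\log x)^{\varepsilon_0})$, so there is no reason the small-prime branch closes after $O_k(1/\varepsilon)$ steps; the termination, if any, must come from the geometric suppression by $(\log P_1/\log x)$ per step, which you do not invoke. Likewise, for $d>P_2$ the ``swap $d\leftrightarrow m$'' does not by itself produce a short Siegel--Walfisz factor: $m=n/d$ ranges over $[1,2x/P_2]$ and $\tau_k(m)$ is not Siegel--Walfisz, so one must iterate on $m$ as well, and then control the accumulated $1/\log(\cdot)$ denominators, which do not factor as a convolution. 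All of this can probably be repaired, but it is substantially more delicate than what you sketch, and the paper's route through Corollary~\ref{cor:mult} (a clean one-step decomposition $n=bpc$ on $\mathcal{S}$, with no iteration and no $\log$ bookkeeping) avoids all of it.
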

An interesting aspect of Corollary \ref{dk} is that it becomes stronger as $k$ increases. The trivial bound for the left-hand side of \eqref{eq:dk} is $\ll x (\log x)^{k - 1}$. Therefore when $k$ is large we are saving $k$ powers of the logarithm over the trivial bound. We note that stronger results are known in the cases $k = 1,2,3$ (see \cite{d3}). However Corollary \ref{dk} is the first non-trivial result for $k > 3$ in the range $Q > x^{1/2 + \delta}$ with $\delta > 0$. It is likely that if we could replace $(\log x)^{1 - \varepsilon}$ with $(\log x)^{1 + \varepsilon}$ in \eqref{eq:dk} then interesting consequences for prime numbers would ensue. 

Corollary \ref{dk} follows from a result that applies to general multiplicative functions.

\begin{corollary} \label{cor:mult} Fix an integer $k \geq 1$ and $\varepsilon > 0$. Let $g: \mathbb{N} \rightarrow \mathbb{C}$ be a multiplicative function such that $|g(n)| \leq \tau_{k}(n)$ for all integer $n \geq 1$. Suppose that the sequence $n \mapsto \mathbf{1}_{n \text{ is prime}} \cdot g(n)$ is Siegel-Walfisz. Then, uniformly for $x \geq 2$, $Q \leq x^{\frac{17}{33} - \varepsilon}$, and $1 \leq |a| \leq x/12$, we have
  \begin{equation} \label{eq:cormult}
  \sum_{\substack{Q < q \leq 2Q \\ (a,q) = 1}} \Big | \sum_{\substack{x < n \leq 2x \\ n \equiv a \pmod{q}}} g(n) - \frac{1}{\varphi(q)} \sum_{\substack{x < n \leq 2x \\ (n,q) = 1}} g(n) \Big | \ll_{\varepsilon} \frac{x}{(\log x)^{1 - \varepsilon}}.
  \end{equation}
\end{corollary}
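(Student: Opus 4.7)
My plan is to deduce Corollary \ref{cor:mult} from the dispersion estimate Corollary \ref{cor:Main}(i) by writing $g$, modulo manageable errors, as a convolution of a Siegel--Walfisz prime sequence with an arbitrary sequence, via a Ramaré-type identity. Fix small parameters $\eta=\varepsilon/(100k)$ and $\delta=\varepsilon/100$, let $\mathcal{P}$ denote the primes in $I=[\exp((\log x)^{\eta}),\, x^{\delta}]$, and set $R_0 := \sum_{p\in\mathcal{P}}1/p\asymp\log\log x$ and $\omega_{\mathcal{P}}(n) = \#\{p\in\mathcal{P}: p\mid n\}$. I would first reduce to $g$ supported on squarefree integers via the Dirichlet-series factorization $g=g_{\mathrm{sf}} \ast h$, where $h$ is multiplicative, supported on squarefull integers, and satisfies $\sum_n |h(n)|/n^{1/2+\varepsilon}<\infty$; the non-squarefree contribution is then summably small.

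I would then decompose $g = g\,\mathbf{1}[\omega_{\mathcal{P}} = 0] + g\,\mathbf{1}[\omega_{\mathcal{P}}\geq 1]$. For the first summand, Shiu's estimate in arithmetic progressions (applicable since $q\leq x^{17/33-\varepsilon}\leq x^{1-\varepsilon}$) gives
\[
\sum_{\substack{n\equiv a\,(q)\\ (n,P(I))=1\\ n\leq 2x}} \tau_k(n) \;\ll\; \frac{x\,(\log x)^{k-1}}{\varphi(q)}\Bigl(\frac{\log y_1}{\log y_2}\Bigr)^{\!k},
\]
and summing over $q \in (Q, 2Q]$ using $\sum_{q \in (Q, 2Q]} 1/\varphi(q)\ll 1$ yields a bound $\ll x/(\log x)^{1-\varepsilon}$ thanks to $\eta\leq\varepsilon/k$. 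For the second summand, the identity
\[
\omega_{\mathcal{P}}(n)\,g(n) \;=\; \sum_{\substack{pm=n\\ p\in\mathcal{P},\,(p,m)=1}} g(p)\,g(m) \;=:\; (\beta \ast \alpha)(n)
\]
expresses $g\cdot\omega_{\mathcal{P}}$ as a convolution with $\beta_p=g(p)\,\mathbf{1}_{\mathcal{P}}(p)$ (Siegel--Walfisz by hypothesis) and $\alpha_m=g(m)$; dyadically decomposing $\mathcal{P}$ and applying Corollary \ref{cor:Main}(i) to each piece (whose range condition $N\leq Q^{-11/12}x^{17/36-\varepsilon'}$ is satisfied for $N\leq y_2=x^{\delta}$ and $Q\leq x^{17/33-\varepsilon}$ once $\delta<\varepsilon/12$) yields $\sum_q |T_q(\beta\ast\alpha, a)|\ll_A x/(\log x)^A$.

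The main obstacle is converting the bound on $T_q(g\cdot\omega_{\mathcal{P}})$ into one on $T_q(g\cdot\mathbf{1}[\omega_{\mathcal{P}}\geq 1])$. Writing $g = g\omega_{\mathcal{P}}/R_0 + g\,(1-\omega_{\mathcal{P}}/R_0)$, the first piece is handled as above and it remains to bound $\sum_q |T_q(g\,(1-\omega_{\mathcal{P}}/R_0), a)|$; the naive Cauchy--Schwarz combined with Turán--Kubilius $\sum_n(\omega_{\mathcal{P}} - R_0)^2\ll xR_0$ loses too many powers of $\log x$ through $\sum_n \tau_k(n)^2$. I would resolve this by iterating the Ramaré expansion with a family of disjoint prime intervals $I_1,\ldots,I_J$: expanding $g\prod_{j\leq J}(1 - \omega_{\mathcal{P}_j}/R_j)$, each intermediate term becomes a convolution $\pm(\beta_S\ast\alpha)/\prod_{j\in S}R_j$ with $\beta_S = \ast_{j\in S}\beta_j$ still Siegel--Walfisz and supported in a dyadic range to which Corollary \ref{cor:Main}(i) applies, while the terminal residual is controlled through the (near-)independence of the fluctuations $\omega_{\mathcal{P}_j}-R_j$ across disjoint $\mathcal{P}_j$ together with a product Turán--Kubilius bound. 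Calibrating $J$ and the $I_j$'s so that all range conditions of Corollary \ref{cor:Main}(i) remain satisfied at each level while the product concentration delivers the full $(\log x)^{1-\varepsilon}$ saving is the technical heart of the argument.
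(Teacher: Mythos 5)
Your high-level strategy (extract a small prime factor in a short range, reduce to a bilinear form, invoke Corollary~\ref{cor:Main}(i)) matches the paper, but the specific decomposition is different and the critical step of your argument does not close. You decompose $g = g\,\mathbf{1}[\omega_{\mathcal P}=0] + g\,\mathbf{1}[\omega_{\mathcal P}\geq 1]$, handle the first piece by Shiu, and express $g\,\omega_{\mathcal P}$ as a convolution. The problem, which you yourself flag, is passing from $g\,\omega_{\mathcal P}$ back to $g\,\mathbf{1}[\omega_{\mathcal P}\geq 1]$. Your proposed fix, iterating the approximate expansion $g\prod_{j\leq J}(1-\omega_{\mathcal P_j}/R_j)$ and controlling the terminal residual via Cauchy--Schwarz and a product Tur\'an--Kubilius bound, cannot deliver the required saving: since the $\mathcal P_j$ are disjoint subsets of primes in $[\exp((\log x)^{\eta}),x^{\delta}]$, Mertens gives $\sum_j R_j\leq\log\log x+O(1)$, so by AM--GM $\prod_j R_j\leq e^{(\log\log x)/e}=(\log x)^{1/e}$ regardless of $J$ and the $I_j$'s. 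Meanwhile Cauchy--Schwarz costs $\|g\|_2\ll x^{1/2}(\log x)^{(k^2-1)/2}$, so the residual's $L^1$-norm is at best $\gg x(\log x)^{(k^2-1)/2-1/(2e)}$; already for $k=1$ this is roughly $x(\log x)^{-0.18}$, far above the target $x(\log x)^{-1+\varepsilon}$. A correct Ramar\'e-type argument would instead use the \emph{exact} identity $\mathbf{1}[\omega_{\mathcal P}(n)\geq 1]=\sum_{p\mid n,\,p\in\mathcal P}\bigl(\omega_{\mathcal P}(n/p)+\mathbf{1}[p\nmid n/p]\bigr)^{-1}$, which carries the normalization inside the convolution (the $m$-weight $g(m)/(\omega_{\mathcal P}(m)+1)$ is a general bounded sequence, perfectly acceptable for Corollary~\ref{cor:Main}) and leaves no residual to iterate away; with $1/R_0$ in place of the exact weight you can never reach the stated exponent of the logarithm.

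The paper avoids the overcounting issue entirely by a combinatorial restriction. It sets $\mathcal J=[\exp((\log x)^{\varepsilon}),x^{\varepsilon}]$, tiles $\mathcal J$ into $L_0\ll(\log x)^{A+1}$ multiplicatively thin sub-intervals $\mathcal I_{\ell}=[M_{\ell},M_{\ell+1})$ with $M_{\ell+1}/M_{\ell}=1+\Delta$ and $\Delta\asymp(\log x)^{-A}$, and works only with the set $\mathcal S$ of $n$ having a prime factor in $\mathcal J$ \emph{and} at most one prime factor in each $\mathcal I_{\ell}$. For $n\in\mathcal S$ the factorization $n=bpc$ (with $p\in\mathcal I_{\ell}$ the unique prime in the interval containing the smallest prime factor of $n$ lying in $\mathcal J$, $b$ $\exp((\log x)^{\varepsilon})$-smooth, $c$ $M_{\ell+1}$-rough) is genuinely unique, so each $\ell$ contributes a clean bilinear form $\alpha_{p,\ell}\beta_{m,\ell}$ to which Corollary~\ref{cor:Main}(i) applies directly, with no normalization and no residual; one sums the $\ll(\log x)^{A+1}$ good bounds over $\ell$. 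The complement $n\notin\mathcal S$ is bounded trivially, $\ll x\varphi(q)^{-1}\bigl((\log x)^{-1+\varepsilon k}+(\log x)^{k-A}\bigr)$, by Shiu's estimate for the $n$ with no prime factor in $\mathcal J$ and by a second-moment count for the $n$ with two primes in some $\mathcal I_{\ell}$, which after summing $1/\varphi(q)$ over $q\sim Q$ fits the allowed $x(\log x)^{-1+\varepsilon}$.
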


Corollary \ref{cor:mult} improves on work of Green \cite{Green}, and subsequent work of Granville-Shao \cite{GrSh} that extended Green's work to all moduli. We notice that if one is only interested in prime moduli then the assumption that $g(p)$ is Siegel-Walfisz can be omitted (see Theorem \ref{NOS-W}). Both Green and Granville-Shao restrict their attention to multiplicative functions $g$ such that $|g(n)| \leq 1$ and obtain in these cases a weaker exponent of distribution $\tfrac{20}{39} < \tfrac{17}{33}$. Roughly speaking Corollary \ref{cor:mult} is non-trivial for multiplicative functions for which there exists an $\varepsilon > 0$ such that $|g(p)| > \varepsilon$ for a positive proportion of primes. We are not aware of any naturally occurring multiplicative function that does not fulfill this condition.




Since most integers with exactly $k$ prime factors have a small prime factor in the range $[\exp((\log x)^{\varepsilon}), x^{\varepsilon}]$ we can apply our dispersion estimate in this case as-well. 

\begin{corollary} \label{corollary4} Fix $k \geq 2$ integer and $\varepsilon > 0$. 
  Let $\Omega(n)$ denote the number of prime factors of $n$ counted with multiplicity. 
  Then, uniformly for $x \geq 12$, $Q \leq x^{17/33 - \varepsilon}$ and $1 \leq |a| \leq x/12$, we have
\begin{equation} \label{eq:corollary4}
\sum_{\substack{Q \leq q \leq 2Q \\ (a,q) = 1}} \Big | \sum_{\substack{x < n \leq 2x \\ n \equiv a \pmod{q} \\ \Omega(n) = k}} 1 - \frac{1}{\varphi(q)} \sum_{\substack{x < n \leq 2x \\ (n,q) = 1 \\ \Omega(n) = k}} 1 \Big | = o \Big ( x \cdot \frac{(\log\log x)^{k - 1}}{\log x} \Big )
\end{equation}
as $x$ tends to infinity.
\end{corollary}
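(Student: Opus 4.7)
The strategy is to extract a prime factor of $n$ in a range $[y_1, y_2]$ suitable for Corollary \ref{cor:Main}, apply the dispersion estimate to the resulting bilinear form, and handle the remaining integers using classical Brun--Titchmarsh/Shiu-type bounds. Throughout I write $M := x (\log\log x)^{k-1}/\log x$ for the target size. First reduce to squarefree $n$: non-squarefree $n \in (x, 2x]$ with $\Omega(n) = k$ decompose as $n = p^2 m'$ with $\Omega(m') = k-2$, and the Shiu--Brun--Titchmarsh bound for $\mathbf{1}_{\Omega(m')=k-2}$ together with $\sum_p 1/p^2 = O(1)$ shows their contribution to $\sum_q|\cdot|$ is $O(x(\log\log x)^{k-3}/\log x) = o(M)$.

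\textbf{Step 1 (main bilinear form).} Fix small $\varepsilon_1, \varepsilon_2 > 0$ with $\varepsilon_2 < 11\varepsilon/12$, set $y_1 = \exp((\log x)^{\varepsilon_1})$, $y_2 = x^{\varepsilon_2}$, and let $W(m) := \#\{p \in [y_1, y_2] \cap \mathrm{Primes} : p \mid m\}$. For squarefree $n$ with $\Omega(n) = k$ and $W(n) \geq 1$, the identity
$$\mathbf{1}_{\Omega(n) = k,\, n \text{ sqf},\, W(n) \geq 1} \;=\; \sum_{\substack{n = pm \\ p \in [y_1, y_2] \cap \mathrm{Primes} \\ \Omega(m) = k - 1,\, m \text{ sqf}}} \frac{1}{W(m) + 1}$$
holds: any such $n$ with $W(n) = j$ admits exactly $j$ factorizations $n = pm$ with $p \in [y_1, y_2]$ and $m$ squarefree (the squarefreeness of $n$ forces $p \nmid m$) having $W(m) = j - 1$, each contributing $1/j$. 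The right-hand side is a bilinear form $\sum_{pm = n} \alpha_m \beta_p$ with $|\alpha_m| \leq 1 \leq \tau_{k-1}(m)$ and $\beta_p = \mathbf{1}_{p \text{ prime}}$ Siegel--Walfisz. Dyadic decomposition $p \in [N, 2N] \subset [y_1, y_2]$ places each piece in the admissible range of Corollary \ref{cor:Main}(i) at $Q \leq x^{17/33 - \varepsilon}$ (the condition $\varepsilon_2 < 11\varepsilon/12$ is precisely what guarantees $y_2 \leq Q^{-11/12} x^{17/36 - \varepsilon'}$ for a suitable $\varepsilon' > 0$). Summing $O(\log x)$ bounds of size $\ll_A x/(\log x)^A$ gives $\sum_q|\cdot|_{\mathrm{good}} \ll_A x/(\log x)^{A-1} = o(M)$. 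The non-squarefree terms arising on the right-hand side (corresponding to $p \mid m$) are absorbed as in the reduction step.

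\textbf{Step 2 (bad part).} It remains to bound $\sum_q|\cdot|_{\mathrm{bad}}$, from squarefree $n$ with $\Omega(n) = k$ and $W(n) = 0$. Writing $n = p_1 n'$ with $p_1 = P^-(n)$ and $\Omega(n') = k - 1$, split into Case A ($p_1 > y_2$, all primes large) and Case B ($p_1 < y_1$, at least one small prime). For each fixed $p_1$, the Shiu--Brun--Titchmarsh upper bound
$$\#\{n' \in [x/p_1, 2x/p_1],\, n' \equiv b\!\!\pmod{q},\, \Omega(n') = k - 1\} \;\ll\; \frac{(x/p_1) (\log\log x)^{k-2}}{\varphi(q) \log x}$$
holds uniformly for $q \ll (x/p_1)^{1 - \eta}$, which covers $q \in [Q, 2Q]$ with $Q \leq x^{17/33 - \varepsilon}$ since $p_1 \leq (2x)^{1/k}$. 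Using the triangle inequality and $\sum_{Q \leq q \leq 2Q} 1/\varphi(q) = O(1)$, one gets $\sum_q|\cdot|_{p_1} \ll (x/p_1)(\log\log x)^{k-2}/\log x$. Then $\sum_{y_2 < p_1 \leq (2x)^{1/k}} 1/p_1 = O_{\varepsilon_2, k}(1)$ bounds Case A by $O(x(\log\log x)^{k-2}/\log x) = o(M)$, while $\sum_{p_1 < y_1} 1/p_1 \sim \varepsilon_1 \log\log x$ bounds Case B by $O(\varepsilon_1 M)$.

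Combining the two steps, $\sum_q|\cdot| \leq C \varepsilon_1 M + o_x(M)$. A standard diagonal argument ($\varepsilon_1 \to 0$ after $x \to \infty$) then yields $\sum_q|\cdot| = o(M)$, as claimed. The main obstacle will be verifying the Shiu--Brun--Titchmarsh estimate uniformly subject to the ancillary constraints imposed on $n'$ in each case (no prime factor in $[y_1, y_2]$; $P^-(n') \geq p_1$ in Case A) and carefully bookkeeping the non-squarefree contributions through the bilinear analysis.
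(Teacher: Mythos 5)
Your Step 1 uses a nicer device than the paper: the weight $1/(W(m)+1)$ turns the overcounting of factorizations $n=pm$ into an exact identity, where the paper instead restricts to integers with at most one prime factor in each dyadic interval $[y2^j, y2^{j+1})$ and pays a separate sieve-theoretic penalty for the excluded integers. This is a genuine, if minor, streamlining. The application of Corollary \ref{cor:Main}(i) and the bookkeeping for non-squarefree $n$ also look sound, modulo the dyadic localization in $m$ which you would need to spell out.

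Step 2, however, has a genuine gap at $k=2$ in Case A. You extract $p_1 = P^-(n)$ and apply a Shiu/Brun--Titchmarsh bound to $n' = n/p_1$ in the interval $[x/p_1, 2x/p_1]$, claiming this is valid for $q \sim Q \leq x^{17/33-\varepsilon}$ because $p_1 \leq (2x)^{1/k}$. For $k=2$ this gives $p_1$ as large as $\sqrt{2x}$, so $(x/p_1)^{1-\eta}$ can be as small as $x^{(1-\eta)/2}$, which is \emph{smaller} than $Q \leq x^{17/33-\varepsilon}$ whenever $\varepsilon < 1/66 - \eta/2$, since $17/33 > 1/2$. In that range the Brun--Titchmarsh bound on the short interval for $n'$ simply does not apply, so the claimed uniformity fails. (For $k \geq 3$ the inequality $(1-1/k)(1-\eta) > 17/33$ holds and the argument is fine.) The remedy is exactly what the paper does: for Case A do not peel off $p_1$ at all, but apply the linear sieve (of dimension one, sifting primes $\leq y_2$, uniformly for $q \leq x^{1-\eta}$) directly to $n$ itself, giving
\[
\#\{n \sim x: n \equiv a \pmod q,\ P^-(n) > y_2\} \ll \frac{x}{\varphi(q)\,\varepsilon_2 \log x},
\]
and then sum $1/\varphi(q)$ over $q \sim Q$. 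Case B is safe because there $p_1 < y_1$ is tiny and $(x/p_1)^{1-\eta} \gg x^{1-2\eta}$.
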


A sieve bound shows that the number of integers with exactly $k$ prime factors congruent to $a \pmod{q}$ with $(a,q) = 1$ is $$\ll_{k} \frac{x}{\varphi(q)} \cdot \frac{(\log\log x)^{k - 1}}{\log x}.$$
Thus Corollary \ref{corollary4} implies that as $x \rightarrow \infty$, for almost all $q \leq x^{17/33 - \varepsilon}$ with $(q,a)=1$, we have
$$
\sum_{\substack{x < n \leq 2 x \\ n \equiv a \pmod{q} \\ \Omega(n) = k}} 1 = (1 + o(1)) \cdot \frac{1}{\varphi(q)} \sum_{\substack{x < n \leq 2x \\ (n,q) = 1 \\ \Omega(n) = k}} 1. 
$$
We are unable at the moment to address the case of $k = 1$ which remains a challenging open problem. Previous results, such as the results of Fouvry-Iwaniec \cite{FoIwMathematika} required $n = p_1 p_2 \ldots p_k$ to factor in a specific way where each $p_i$ is localized in certain special intervals. Thus this did not allow one to obtain a result like Corollary \ref{corollary4}. We notice, by the way, that a result saying that $n = p_1 p_2$ with $p_1 \asymp \exp((\log x)^{\varepsilon})$ has an exponent of distribution of at least $\tfrac{17}{33} - \varepsilon$ follows immediately from Corollary \ref{cor:Main}. Although we already said it before, we do find it striking that such a small perturbation of the sequence of primes leads to such a high level of distribution.

Combining Corollary \ref{cor:Main} and Dirichlet's divisor switching technique allows us to achieve a very high level of distribution for sieve weights of level up to $x^{17/33 - \varepsilon}$. 


\begin{corollary} \label{cor:dirichlet} Let $\varepsilon > 0$ and $k > 0$ be given. Let $\boldsymbol \lambda = (\lambda_d)_{1 \leq d \leq z}$ be a sequence of complex numbers with $|\lambda_d| \leq \tau_k(d)$. Then, we have
$$
\sum_{\substack{q \sim Q  \\ (q,a)=1}} \Big | \sum_{\substack{x < n \leq 2 x \\ n \equiv a \pmod{q}}} \Big ( \sum_{\substack{d | n \\ d \leq z}} \lambda_d \Big ) - \frac{1}{\varphi(q)} \sum_{\substack{x < n \leq 2x \\ (n,q) = 1}} \Big ( \sum_{\substack{d | n \\ d \leq z}} \lambda_d \Big ) \Big | \ll _{A} \frac{x}{(\log x)^{A}},
$$
provided that either of the following three conditions holds: 
\begin{enumerate}
\item $x \geq 12$, $z \leq x^{53/105 - \varepsilon}$,  $x^{1 - \varepsilon} > Q > x^{529/630 + \varepsilon}$ and $1 \leq |a| \leq x^{\varepsilon / 10000}$
\item $x \geq 12$, $z \leq x^{53/105 - \varepsilon}$, $x^{1 - \varepsilon} > Q > x^{83 / 90 + \varepsilon}$ and $1 \leq |a| \leq x^{1 - 3 \varepsilon}$
\item $x \geq 12$, $z \leq x^{1/2 + \delta - \varepsilon}$, $x^{1 - \varepsilon} > Q > x^{(71 + 66 \delta)/72 + \varepsilon}$, and $1 \leq |a| \leq x^{1 - 3 \varepsilon}$ for any fixed $0 < \delta < \tfrac{1}{66}$. In this case the implicit constant in $\ll_{A}$ depends additionally on $\delta$. 
\end{enumerate}
\end{corollary}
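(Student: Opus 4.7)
Let $T$ denote the left-hand side. By the standard trick of introducing $|\eta_q| \leq 1$ we write $T = \sum_{q \sim Q,\, (q,a)=1} \eta_q (S_a(q) - M_a(q))$, where $S_a(q) = \sum_{x < n \leq 2x,\, n \equiv a \pmod{q}} F(n)$ and $M_a(q)$ is the corresponding main term, with $F(n) := \sum_{d \mid n,\ d \leq z} \lambda_d$. The decisive feature of the statement is that $Q$ exceeds $x^{1/2}$ by a definite power of $x$ in every case, permitting Dirichlet's divisor switching.

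Since $Q > x^{1/2}$, for every pair $(n, q)$ with $n \in (x, 2x]$, $n \equiv a \pmod{q}$, $q \sim Q$, we have $n - a = q\ell$ with $\ell$ in the small range $\ell \sim L := x/Q$. Substituting this into $T$ and interchanging the order of summation, $T$ is recast as a dispersion-type sum modulo the small modulus $\ell$:
\[ T = \sum_{\ell \sim L}\, \sum_{\substack{x < n \leq 2x \\ n \equiv a \pmod{\ell}}} F(n)\,\eta_{(n-a)/\ell}\,\mathbf{1}\!\left[\tfrac{n-a}{\ell} \sim Q\right] - \text{main term}. \]
In cases (i), (ii), (iii) we have $L \leq x^{101/630 - \eps}$, $L \leq x^{7/90 - \eps}$, and $L \leq x^{(1 - 66\delta)/72 - \eps}$ respectively, all within the range permitted by the corresponding case of Corollary~\ref{cor:Main}.

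To fit the bilinear form required by Corollary~\ref{cor:Main}, which needs a Siegel--Walfisz factor supported on a tiny interval, we extract a small prime factor of $n$: as in the proof of Corollary~\ref{corollary4}, all but a negligible subset of $n \sim x$ admits a factorization $n = p n'$ with $p$ prime in $[\exp((\log x)^\eps),\, x^\eps]$. A dyadic partition-of-unity argument then assembles the inner sum as a bilinear convolution $\sum_{pn' \equiv a \pmod{\ell}} \mathbf{1}[p \text{ prime}]\, \gamma_{n'}$, in which $\mathbf{1}[p \text{ prime}]$ supplies the Siegel--Walfisz factor and $\gamma_{n'}$ is a sequence bounded by a fixed divisor function, absorbing both the sieve weights $\lambda_d$ and the weight $\eta_{(n-a)/\ell}$. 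Applying Corollary~\ref{cor:Main} with modulus $\ell$ then yields $T \ll_A x(\log x)^{-A}$: case (i) of the present corollary uses Corollary~\ref{cor:Main}(iii), case (ii) uses Corollary~\ref{cor:Main}(ii), and case (iii) uses Corollary~\ref{cor:Main}(i); in all three instances the constraint on $|a|$ is inherited directly from the corresponding case of Corollary~\ref{cor:Main}.

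The main obstacle is the careful bookkeeping around the Dirichlet switch: the switched main term must cancel the switched version of $M_a(q)$ to within an admissible error, and the weight $\eta_{(n-a)/\ell}$, which couples the summation variable $n$ to the outer modulus $\ell$, must be absorbed into the sequence $\gamma$ while preserving the divisor bounds demanded by Corollary~\ref{cor:Main}. This is feasible because Corollary~\ref{cor:Main} places the absolute value entirely inside the outer sum, so the bound applies verbatim once $\gamma$ is shown to satisfy the required $\tau_{k'}$-bound uniformly in the remaining parameters.
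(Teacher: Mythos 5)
Your general plan is right --- the smallness of $x/Q$ does enable divisor switching, and the exponent arithmetic (the target $L$-ranges, and the mapping of cases (i),(ii),(iii) of the present corollary onto cases (iii),(ii),(i) of Corollary~\ref{cor:Main}) is correct --- but the specific implementation of the switch has a genuine gap that I do not see how to repair as written.

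After unfolding the absolute value with phases $\eta_q$ and substituting $n = a + q\ell$, you arrive (correctly) at a sum over the small variable $\ell \sim L = x/Q$, with inner weight $\eta_{(n-a)/\ell}\,F(n)\,\mathbf{1}[(n-a)/\ell \sim Q]$ over $n$. You then propose to extract a tiny prime $p \mid n$, write $n = p n'$, and apply Corollary~\ref{cor:Main} with modulus $\ell$, taking $\mathbf{1}[p \text{ prime}]$ as the Siegel--Walfisz factor and ``absorbing $\eta_{(n-a)/\ell}$ into $\gamma_{n'}$.'' That absorption is not possible. The phase $\eta_{(n-a)/\ell} = \eta_{(pn'-a)/\ell}$ is a genuinely coupled function of $p$, $n'$ and the modulus $\ell$; it does not factor as $\alpha_p\,\gamma_{n'}$, and Corollary~\ref{cor:Main} requires the coefficients $\alpha_m,\beta_n$ to be fixed (independent of the modulus) since the absolute value is taken around each discrepancy and then summed. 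The only $q$-dependence the corollary tolerates is an overall unimodular phase $c_q$ multiplying the whole discrepancy; a coefficient $\gamma_{n'}$ that varies with $\ell$ is outside its scope. (A subsidiary but related issue: $F(pn') = \sum_{d \mid pn',\ d \le z}\lambda_d$ is also not a function of $n'$ alone.)

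The correct way to perform the switch --- the one in the paper --- is to first expand $F(n) = \sum_{d \mid n,\ d \le z}\lambda_d$ and move $d$ to the outside. Writing $n = db$, $(d,a)=1$, and $db - a = qr$ turns the inner sum into $\sum_{d \le z}\lambda_d\sum_{q\sim Q,\ r \sim x/Q,\ qr \equiv -a \pmod d}\xi_q$, a bilinear form in $(q,r)$ with modulus $d \le z$. Here the unfolded phase $\xi_q$ depends only on the bilinear variable $q$, so it is directly the coefficient $\alpha_m = \xi_m$, and the Siegel--Walfisz sequence is the trivial $\beta_r = \mathbf{1}_{r\sim x/Q}$ --- no prime extraction is needed at all. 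The prime-extraction step, which you imported from the proofs of Corollaries~\ref{cor:mult} and \ref{corollary4}, is what creates the coupling; in those corollaries the sieve/phase structure of the present problem is absent, which is why it works there but not here. You would also need, as the paper does, to first reduce to $(d,a)=1$ (splitting over $\Delta = (d,a)$) so that $d\mid n$ and $n \equiv a \pmod q$ imply $(d,q)=1$, and to verify that the switched main term and the switched $\tfrac1{\varphi(q)}$-term both reduce to $x\sum_{q,d}\xi_q\lambda_d/(qd)$ up to admissible errors; your proposal flags this bookkeeping but the $d$-versus-$\ell$ mix-up has to be fixed before it can be carried out.
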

To make comparison between the $Q$-ranges in Corollary \ref{cor:dirichlet} easier we record the numerical values
$$
\frac{529}{630} = 0.83962 \ldots \ , \ \frac{83}{90} = 0.92222 \ldots \ , \ \frac{71}{72} = 0.98611 \ldots
$$
Corollary \ref{cor:dirichlet} implies an improvement in the Brun-Titchmarsh theorem for almost all moduli $q \in [x^{1 - \varepsilon}, 2 x^{1 - \varepsilon}]$ which is new for all sufficiently small $\varepsilon > 0$.

\begin{corollary} \label{cor:tit}
  Let $\tfrac 12 < \theta < 1$, $\varepsilon > 0$ and $C > 0$ be given.
  Then, for every $A > 0$ we have,  
  \begin{equation} \label{eq:tit} 
  \#\Big \{ q \in [x^{\theta}, 2 x^{\theta}] \text{ and } (q,a) = 1: \pi(x; q,a) > \frac{(4 - \tfrac{2}{53} + \varepsilon) x}{\varphi(q) \log x} \Big \} \ll_{A} \frac{x^{\theta}}{(\log x)^{A}} 
  \end{equation}
  uniformly in $x \geq 2$ and $1 \leq |a| \leq (\log x)^{C}$. Moreover,
  \begin{enumerate}
  \item If $\theta > \frac{83}{90}$ then \eqref{eq:tit} holds uniformly in $x \geq 2$ and $1 \leq |a| \leq x^{1 - 3 \varepsilon}$
  \item If $\theta = 1 - \eta$ with $0 < \eta < \tfrac{1}{210}$ then \eqref{eq:tit} holds with $4 - \tfrac{2}{53}$ replaced by $\frac{66}{17 - 36 \eta}$, uniformly in $x \geq 2$ and $1 \leq |a| \leq x^{1 - 3\varepsilon}$
  \end{enumerate}
\end{corollary}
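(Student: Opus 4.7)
The plan is to combine Selberg's upper bound sieve with the dispersion estimate of Corollary~\ref{cor:dirichlet} to replace the sum in arithmetic progression by its natural main term for almost all $q$. Set $y = x^{53/210 - \varepsilon'}$ with $\varepsilon' > 0$ a small multiple of $\varepsilon$, and let $(\lambda_d)_{d \leq y}$ be universal (i.e., $q$-independent) Selberg weights for sifting out primes $p \leq y$, with $\lambda_1 = 1$, $|\lambda_d| \leq 1$, and $\lambda_d$ supported on squarefree $d$. Setting $\mu_D := \sum_{[d_1, d_2] = D} \lambda_{d_1} \lambda_{d_2}$, one has $|\mu_D| \leq \tau_3(D)$ and $\mu_D$ is supported on $D \leq y^2 \leq x^{53/105 - \varepsilon}$. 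Selberg's inequality gives
\begin{equation*}
\pi(x; q, a) \leq \sum_{\substack{n \leq x \\ n \equiv a \pmod{q}}} \Big( \sum_{\substack{d \mid n \\ d \leq y}} \lambda_d \Big)^2 + y = \sum_{D \leq y^2} \mu_D \sum_{\substack{n \leq x \\ n \equiv a \pmod{q} \\ D \mid n}} 1 \;+\; y,
\end{equation*}
which is exactly the type of sieve sum handled by Corollary~\ref{cor:dirichlet}.

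For the main claim with $\theta > 529/630$ and $|a| \leq (\log x)^C$, I apply Corollary~\ref{cor:dirichlet}(1) to the coefficients $\mu_D$ (noting $(\log x)^C \ll x^{\varepsilon/10000}$). The resulting $\ell^1$ bound of $O_{A}(x / (\log x)^A)$ on the deviations, combined with Chebyshev's inequality, implies that for all but $O(x^\theta / (\log x)^{A - 1})$ exceptional $q \in [x^\theta, 2 x^\theta]$ with $(q, a) = 1$,
\begin{equation*}
\pi(x; q, a) \leq \frac{1}{\varphi(q)} \sum_{\substack{n \leq x \\ (n, q) = 1}} \Big( \sum_{\substack{d \mid n \\ d \leq y}} \lambda_d \Big)^2 + o \Big( \frac{x}{\varphi(q) \log x} \Big).
\end{equation*}
Standard Selberg sieve asymptotics bound the first term by $(1 + o(1)) \cdot x / (\varphi(q) \log y) \leq (4 - \tfrac{2}{53} + \varepsilon) \cdot x / (\varphi(q) \log x)$, since $\log y = (53/210 - \varepsilon') \log x$. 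For $\theta$ in the complementary range $(1/2, 529/630]$, the target bound is far weaker than existing asymptotic formulas for primes in arithmetic progressions to moduli slightly beyond $\sqrt{x}$ (e.g., of Bombieri--Friedlander--Iwaniec type \cite{B-F-I2}) and therefore requires no new input.

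The two extensions follow the same template using different clauses of Corollary~\ref{cor:dirichlet}. For (1) ($\theta > 83/90$, $|a| \leq x^{1 - 3\varepsilon}$), substitute clause (2); the sieve level $y^2 = x^{53/105 - \varepsilon}$ and the constant $4 - \tfrac{2}{53}$ are unchanged. For (2) ($\theta = 1 - \eta$ with $\eta < 1/210$), use clause (3) with $\delta = 1/66 - 12\eta/11 - C\varepsilon$ for a suitable constant $C > 11/12$ (so that $(71 + 66\delta)/72 + \varepsilon < 1 - \eta$) and sieve level $y^2 = x^{1/2 + \delta - \varepsilon}$; the Selberg bound $(1+o(1)) \cdot x/(\varphi(q) \log y) = 4/(1 + 2\delta) \cdot x/(\varphi(q) \log x)$ then yields the constant $66/(17 - 36\eta) + O(\varepsilon)$. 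The main obstacle is purely technical: to invoke Corollary~\ref{cor:dirichlet} uniformly over the dyadic range of $q$, the Selberg weights $\lambda_d$ must be $q$-independent, which is arranged by sifting primes on $[1, x]$ with universal weights and absorbing the bounded discrepancy between $V(y) = \sum_{d \leq y,\; (d, q) = 1} \mu^2(d)/\varphi(d)$ and its unrestricted analogue into the $o(1)$ error term.
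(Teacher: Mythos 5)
Your proposal follows essentially the same approach as the paper: take a $q$-independent upper-bound sieve of level $z \le x^{53/105 - \varepsilon}$, expand the square into a sieve-weight sum $\sum_{D \le z} \mu_D \mathbf 1_{D \mid n}$, apply Corollary~\ref{cor:dirichlet} to replace the progression count by its main term for almost all $q$, and evaluate the resulting sieve constant $2/\log z = 1/\log y$. The paper uses the explicit Barban--Vehov/Graham weights $\lambda_d = \mu(d)(1 - \log d/\log\sqrt z)$, which sidesteps the $q$-uniformity concern you raise at the end (the weights are automatically $q$-independent, and the asymptotic $(2 + o(1))x/(\varphi(q)\log z)$ is a classical fact), whereas you use abstract optimal Selberg weights; this is a cosmetic difference. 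Your numerics for parts (1) and (2) are correct, up to a sign-flipped typo (you need $C > 12/11$, not $C > 11/12$, in the choice of $\delta$ for part (2)).

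There is one concrete issue: for the main estimate \eqref{eq:tit} you invoke Corollary~\ref{cor:dirichlet}(1) only for $\theta > 529/630 \approx 0.84$, and wave at ``Bombieri--Friedlander--Iwaniec type'' results for the complementary range $\tfrac12 < \theta \le 529/630$. This does not work as stated: BFI-type theorems address moduli up to $x^{1/2 + \delta}$ for some fixed small $\delta$, not up to $x^{0.84}$. The correct fallback, which the paper uses for the analogous range $\tfrac12 < \theta \le 83/90$, is Fouvry's almost-all Brun--Titchmarsh theorem \cite{FouvryInv} quoted in \eqref{eq:fouvtit}, which already beats the constant $4 - 2/53$ for $\theta < 104/105$ but only uniformly in $|a| \le (\log x)^C$ — exactly the restriction that the unqualified form of \eqref{eq:tit} carries. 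Substituting \cite{FouvryInv} for the BFI reference in the complementary range closes this gap and completes a proof that is otherwise the same as the paper's.
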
.

Thus as $\theta$ approaches $1$ the constant in the Brun-Titchmarsh inequality approaches $\frac{66}{17} = 3.88 \ldots$ for almost all moduli $q \in [x^{\theta}, 2 x^{\theta}]$. The important point in Corollary \ref{cor:tit} is that it breaches in all ranges the value $4$ which is a consequence of techniques related to the Bombieri-Vinogradov theorem.  Conjecturally we expect that the optimal constant is equal to $1$.

For comparison we note that for $\frac{9}{10} < \theta < 1$ Fouvry showed in \cite{FouvryInv} that 
\begin{equation} \label{eq:fouvtit}
\pi(x; q, a) \leq \Big ( \frac{4}{2 - \theta} + o(1) \Big )\cdot \frac{x}{\varphi(q) \log x} 
\end{equation}
for almost all $q \in [x^{\theta}, 2 x^{\theta}]$ with $(q,a) = 1$, with at most $\ll_{A} x^{\theta} (\log x)^{-A}$ exceptions. 
In the range $\tfrac 12 < \theta < \tfrac {9}{10}$ Fouvry obtains numerically stronger results than \eqref{eq:fouvtit}. In the range $\tfrac 12 < \theta < 0.56$ the strongest currently known results are due to Baker-Harman \cite{BakerHarman}. Their work is motivated by applications to the size of the greatest prime factor of $p - 1$ with $p$ prime.


\subsection{Precise statement of the main theorem}

We now discuss the precise estimates that we obtain and from which Corollary \ref{cor:Main} follows. We also set-up here the notations that will be used throughout the remainder of the paper, and discuss the main ingredients in the proof of Corollary \ref{cor:Main}. 

Let $\boldsymbol \beta = (\beta_n)$ be a sequence of real numbers supported on $N < n \leq 2N$. The $\ell_{2}$ norm is defined by
$$
\| \boldsymbol \beta \|_{2, N} = \Big ( \sum_{N < n \leq 2N} |\beta_{n}|^2 \Big )^{1/2}. 
$$
To avoid constantly writing $N < n \leq 2N$ in subscripts, we will abbreviate this in subscripts as $n \sim N$, however outside of subscripts the notation $\sim$ corresponds to the usual asymptotic notation. 

Given a sequence $\boldsymbol \beta = (\beta_{n})$ and integers $a,q$ with $q \geq 1$ and $(a,q) = 1$ we measure the distribution of $\boldsymbol \beta$ in arithmetic progressions $a \pmod{q}$ by considering the discrepancy,
\begin{equation} \label{defE}
E ( \boldsymbol \beta, N, q, a) := \sum_{\substack{n\sim N \\ n\equiv a \pmod q}} \beta_n -\frac{1}{\varphi (q) } \sum_{\substack{n\sim N \\ (n,q) =1}}\beta_n,
\end{equation}
and the slight variant,
\begin{equation} \label{defEmodif}
E^{\star} ( \boldsymbol \beta, N, q, a; r) := \sum_{\substack{n \sim N \\ n \equiv a \pmod{q} \\ (n,r) = 1}} \beta_{n} - \frac{1}{\varphi(q)} \sum_{\substack{n \sim N \\ (n, q r) = 1}} \beta_{n}.
\end{equation}
defined for all $r \geq 1$.

When dealing with two sequences $\boldsymbol \alpha = (\alpha_{m})$ and $\boldsymbol \beta = (\beta_{n})$ supported respectively on integers $M < m \leq 2M$ and $N < n \leq 2N$, we define
$$
E ( \boldsymbol \alpha, \boldsymbol \beta, M, N, q, a) :=  \underset{\substack{m\sim M,\ n \sim N \\ mn \equiv a \pmod q}}{\sum \ \sum} \alpha_m \beta_n -\frac{1}{\varphi (q) }\underset{\substack{m\sim M,\ n \sim N \\ (mn ,q)=1}}{\sum \ \sum} \alpha_m \beta_n.
$$
We are interested in understanding 
\begin{equation}\label{defDelta}
\Delta (\boldsymbol \alpha, \boldsymbol \beta, M, N, Q, a)
:=
\sum_{\substack{q\sim Q \\ (q,a)=1}} \Bigl\vert E(\boldsymbol \alpha, \boldsymbol \beta, M, N , q, a)\Bigr\vert.
\end{equation}
The contribution of the small moduli to $\Delta(\boldsymbol \alpha, \boldsymbol \beta, M, N, q, a)$ is captured by
\begin{equation} \label{defcalE}
\mathcal{E}^{\star}( \boldsymbol \beta, N, Q) := \sum_{\delta} \sum_{v \sim Q / \delta} \sum_{(\delta', \delta) = 1} |E^{\star} ( \boldsymbol \beta, N, \delta, \delta'; v)|^2. 
\end{equation}
Notice that if the sequence $\boldsymbol \beta$ is Siegel-Walfisz, satisfies the bound $|\beta_{n}| \leq \tau_k(n)$ for some $k > 0$, and
if  $N > Q^{\varepsilon}$, then (according to Lemma \ref{874}) we have
$$
\mathcal{E}^{\star}(\boldsymbol \beta, N, Q) \ll_{A} N^2 Q (\log N)^{-A}.
$$

\begin{theorem}\label{Generalresult} Let $k \geq 1$ be an integer and let $\varepsilon > 0$ be given. Suppose that $M,N,Q$ and $D$ are such that $$M > Q (M N)^{\varepsilon} \ , \ M > N > D^{10}.$$
  Let $X = M N$. Let $\boldsymbol \alpha = (\alpha_{m})_{m \sim M}$ and $\boldsymbol \beta = (\beta_{n})_{n \sim N}$ be two sequences of complex numbers such that $|\alpha_{m}| \leq \tau_k(m)$ and $|\beta_{n}| \leq \tau_k(n)$ for all $m, n \geq 1$. Then, for all integers $1 \leq |a| \leq X / 3$ we have, 
  \begin{align*}
    \Delta (\boldsymbol \alpha, & \boldsymbol \beta, M, N, Q, a)  \ll_{k,\varepsilon} \| \boldsymbol \alpha \|_{2,M} \cdot \Big ( M Q^{-1} \mathcal{E}^{\star}(\boldsymbol \beta , N, Q)  \\ & +  
    (\log X)^{\kappa} N^2 Q  + (\log X)^{\kappa} D^{-\frac{1}{2}} MN^2 +
D^{C} X^\varepsilon ( M^\frac{3}{20} N^\frac{59}{20}  Q^\frac{33}{20}+ N^\frac{23}{8} Q^\frac{15}{8})  
\Big )^\frac{1}{2} 
  \end{align*}
for some constants $\kappa = \kappa(k)$ and $C = C(k, \varepsilon)$, depending only on $k$ and $k, \varepsilon$ respectively.   
\end{theorem}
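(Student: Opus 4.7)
My plan is to prove Theorem~\ref{Generalresult} by Linnik's dispersion method, following the framework of Bombieri--Friedlander--Iwaniec and using the Kloosterman-fraction inputs of \cite{D-F-I} and \cite{Be-Ch} in the style of Green \cite{Green} and Fouvry \cite{FoTh}. First I introduce phases $\eta_q$ with $|\eta_q|\le 1$ encoding the sign of $E(\boldsymbol\alpha,\boldsymbol\beta,M,N,q,a)$, write $\Delta$ as $\sum_{m\sim M}\alpha_m \sum_{q\sim Q,(q,a)=1} \eta_q E_m(q)$ where $E_m(q)$ is the bracketed discrepancy in $n$, and apply Cauchy--Schwarz in $m$ to obtain $|\Delta|^2 \le \|\boldsymbol\alpha\|_{2,M}^2\cdot \mathcal{D}$ with $\mathcal{D} = \sum_m \bigl|\sum_q \eta_q E_m(q)\bigr|^2$. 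Expanding the square and swapping summations to put $m$ innermost, $\mathcal{D}$ splits as $S_1 - 2\,\Re\, S_2 + S_3$, corresponding to the three pairings of (congruence) and (main-term) for the two copies of $E_m(q)$.

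For $S_3$ and for the portion of $S_1,S_2$ where $q_1 = q_2$ (or more generally where $(q_1,q_2)$ is large), I bound the innermost $m$-sum trivially and use the Siegel--Walfisz quality of $\boldsymbol\beta$ packaged into $\mathcal{E}^\star(\boldsymbol\beta,N,Q)$ to clean up the remaining sum over $(n_1,n_2,q_1,q_2)$. This yields the contributions $MQ^{-1}\mathcal{E}^\star(\boldsymbol\beta,N,Q)$ and $(\log X)^\kappa N^2 Q$ in the stated bound, with the $(\log X)^\kappa$ factor arising from a divisor-type estimate for the number of $(n_1,n_2)$ sharing small prime factors with $q_1 q_2$.

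The off-diagonal part, summed over coprime $q_1,q_2$, carries the main content. For fixed $(q_1,q_2,n_1,n_2)$ the inner $m$-sum counts $m\sim M$ in a single residue class modulo $q_1 q_2$, which via Poisson summation becomes a sum over a dual variable $h$ of Kloosterman fractions of shape $e\bigl(a\,\overline{n_1 q_2 - n_2 q_1}\, h / q_1 q_2\bigr)$ weighted by smooth, rapidly decaying factors. I truncate $|h|\le H$ with $H \asymp D\cdot Q^2/M$; the tail $|h|>H$ contributes $(\log X)^\kappa D^{-1/2} MN^2$ thanks to the smoothness of the weight and the hypothesis $M>Q(MN)^\varepsilon$ that makes Poisson lossless. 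Within $|h|\le H$ the remaining triple sum in $(n_1,n_2,h)$ is bounded by splitting by the effective ranges of the variables: in the genuinely trilinear regime I invoke the Bettin--Chandee bound, yielding $D^C X^\varepsilon M^{3/20} N^{59/20} Q^{33/20}$, while in the degenerate regime where one variable is effectively fixed I invoke the Duke--Friedlander--Iwaniec bilinear bound, yielding $D^C X^\varepsilon N^{23/8} Q^{15/8}$.

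The main obstacle is the careful bookkeeping needed to make the Kloosterman-fraction estimates applicable: the coprimality and divisibility constraints on $(n_1,n_2,q_1,q_2,h,a)$ must be reduced by M\"obius inversion and by peeling off greatest common divisors into the clean bilinear and trilinear forms treated in \cite{D-F-I} and \cite{Be-Ch}, with each such reduction absorbed into the $D^C$ factor. The hypotheses $M>Q(MN)^\varepsilon$, $M>N$ and $N>D^{10}$ enter precisely here: they guarantee, respectively, that Poisson summation is lossless, that $\boldsymbol\beta$ sits on the short side of the Cauchy--Schwarz, and that the Poisson cutoff $H$ is large enough for both the tail estimate and the Kloosterman inputs to be simultaneously nontrivial. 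Balancing the parameter $D$ optimally in the final inequality is what will eventually yield the exponents of distribution claimed in Corollary~\ref{cor:Main}.
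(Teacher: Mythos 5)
Your plan follows the paper's approach exactly: Cauchy--Schwarz in $m$, dispersion into the three sums $W-2\Re V+U$, evaluation of each by Poisson summation, and an invocation of the Bettin--Chandee/Duke--Friedlander--Iwaniec estimates for Kloosterman fractions on the nonzero Fourier frequencies, with a truncation parameter $D$ controlling the gcd decomposition. Two points deserve correction before you try to execute the details.

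First, you misidentify the source of the $(\log X)^{\kappa} D^{-1/2}MN^2$ term. In the paper this does \emph{not} come from the tail of the Poisson sum. With the explicit smooth $\psi$ of Lemma~\ref{existenceofpsi}, the Poisson cutoff is $H\asymp \mathcal L^4 Q^2/M$ (no $D$), and the tail contributes only $O(M^{-1})$ per tuple, hence a total $\ll\mathcal L^{\kappa}M^{-1}N^2Q^2\ll\mathcal L^\kappa N^2Q$ (by $M>QX^\varepsilon$), which is absorbed into the $N^2 Q$ term. The $D^{-1/2}MN^2$ term instead arises from \emph{truncating the gcd-decomposition variables} $d=(n_1,n_2)$, $\delta=(q_1,q_2)$ and the $\delta^\infty$-parts $d_1,\delta_1,\delta_2$ at $D$ (Lemma~\ref{approxW} and again when dropping these conditions in the main term computation \eqref{MTW}). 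This truncation is logically prior to Poisson: without localizing those gcd variables you cannot write the double congruence on $m$ as a single class modulo $\gamma k_1'k_2'$ and you cannot carry out the Bezout manipulations that produce Kloosterman fractions $e(\vartheta a\overline{m}/n)$ with the moduli $k_1',k_2'\sim Q/\delta$ sitting in the right places. If you set up the $D$-parameter as a Poisson cutoff enlargement instead, the $D^{-1/2}$ error will not materialize and the later reductions will not go through cleanly.

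Second, the Kloosterman fraction you write, $e\bigl(a\,\overline{n_1q_2-n_2q_1}\,h/q_1q_2\bigr)$, is not quite the correct CRT-dual phase (it is missing a factor, and more importantly the modulus is $q_1q_2\asymp Q^2$), and it is in any case not yet of the trilinear shape $e(\vartheta a\overline{m}/n)$ with $m,n$ of comparable size required by Lemma~\ref{trilinear}. The paper instead keeps the factored form $e\bigl(ah\,\overline{\gamma dd_1\nu_1'}\,\overline{k_2'}/k_1'+ah\,\overline{\gamma d\nu_2}\,\overline{k_1'}/k_2'\bigr)$ and applies Bezout twice to put the variable part into the form $e\bigl(\vartheta h\,\overline{(\gamma dd_1\nu_2 k_1')}/(\nu_1'k_2')\bigr)$, with $\nu_2 k_1'$ and $\nu_1' k_2'$ playing the roles of the two moduli of size $\asymp NQ$; both the $M^{3/20}N^{59/20}Q^{33/20}$ and the $N^{23/8}Q^{15/8}$ pieces then come out of a single application of Bettin--Chandee (whose estimate already contains both the trilinear and the DFI-type term), rather than from a case split between ``genuinely trilinear'' and ``degenerate'' regimes. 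Your plan's intent is right, but the reduction to the exact form of Lemma~\ref{trilinear} is the technical heart of the argument and needs to be done carefully, not merely absorbed into $D^C$.
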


Theorem \ref{Generalresult} is particularly useful when $Q$ is significantly larger than $\sqrt{M N}$ but $N$ is small compared to $M$ (for instance $M^{\varepsilon} < N < M^{1/105}$) When $N$ is not small compared to $M$ (say $N > M^{1/20}$ or $N > M^{1/10}$) we can appeal to previous results of Fouvry \cite[Th\'eor\`eme 1]{FoActaMath}, \cite[Corollaire 1]{FoAnnENS} and Bombieri-Friedlander-Iwaniec \cite[Theorem 3]{BFI1}. This leads to the stronger exponents appearing in $ii)$ and $iii)$ of Corollary \ref{cor:Main}. Finally Corollary \ref{cor:Main} also differs from Theorem \ref{Generalresult} by the addition of the multiplicative constraint $x < m n \leq 2 x$. This is essentially a technicality.

In \S \ref{NoSiegel} we will prove a variant of Theorem \ref{Generalresult}, where we make no Siegel--Walfisz type assumption for the sequence $\boldsymbol \beta$, but where the summation over $q$  is restricted to prime moduli.
\begin{theorem}\label{NOS-W}
  Let $k \geq 1$ be an integer and let $\varepsilon > 0$ be given. 
  Let $\boldsymbol \alpha = (\alpha_{m})_{m \sim M}$ and $\boldsymbol \beta = (\beta_{n})_{n \sim N}$ be two sequences of complex numbers such that $|\alpha_{m}| \leq \tau_k(m)$ and $|\beta_{n}| \leq \tau_k(n)$ for all $m,n \geq 1$. Let $X = M N$. Then, uniformly in $M, N \geq 1$, and $1 \leq |a| \leq X / 3$, 
 \begin{equation} \label{eq:ineqprimes} 
\sum_{\substack{q \sim Q \\ q \text{ prime } \\ (q,a)=1}} \Big \vert \, E(\boldsymbol \alpha, \boldsymbol \beta, M, N , q, a)\, \Big \vert  =O_{\varepsilon, k}
\Big (\, MN \exp \Big (-(\sqrt {\log N})/2 \Big ) \, \Big ), 
\end{equation}
provided that $\exp((\log X)^{\varepsilon}) \leq N \leq Q^{-11/12} X^{17/36 - \varepsilon}$ and $Q \geq \exp(\sqrt{\log N})$.
\end{theorem}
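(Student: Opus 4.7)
The plan is to follow the proof of Theorem \ref{Generalresult} while restricting the outer sum over $q$ to primes, and to locate and replace every invocation of the Siegel--Walfisz hypothesis on $\boldsymbol{\beta}$ by an unconditional bound that exploits the hypothesis $Q \geq \exp(\sqrt{\log N})$. After introducing signs $\eta_q \in \{-1, 1\}$ to remove the absolute values and applying Cauchy--Schwarz in the $m$-variable to factor out $\boldsymbol{\alpha}$, the problem reduces to a second moment estimate in $m \sim M$. Expanding the resulting square produces a double sum over pairs of primes $q_1, q_2 \sim Q$, which we split into a diagonal part $q_1 = q_2$ and an off-diagonal part $q_1 \neq q_2$.

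The diagonal contribution is handled directly via Shiu's bound on $\sum_{n \sim N} \tau_k(n)^2$, using that there are only $O(Q/\log Q)$ primes in $[Q, 2Q]$; this comfortably fits within the target error. In the off-diagonal part, the key structural fact is that $\gcd(q_1, q_2) = 1$ automatically, so the bilinear and trilinear Kloosterman-fraction bounds of Duke--Friedlander--Iwaniec and Bettin--Chandee apply essentially verbatim from Theorem \ref{Generalresult} in the stated range $N \leq Q^{-11/12} X^{17/36 - \varepsilon}$. Where Siegel--Walfisz previously entered --- through the quantity $\mathcal{E}^{\star}(\boldsymbol{\beta}, N, Q)$ --- one now faces bounds on $|E^{\star}(\boldsymbol{\beta}, N, \delta, \delta'; v)|$ only for auxiliary moduli $\delta$ that are consistent with $q_1, q_2$ being prime. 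Concretely, $\delta$ is forced to be either $1$ (for which $E^{\star} \equiv 0$) or of order $\delta \sim Q$. In the second case, for any $\delta \sim Q \geq \exp(\sqrt{\log N})$, Shiu's inequality together with the trivial bound on the number of $n \sim N$ in a fixed residue class modulo $\delta$ yield
$$|E^{\star}(\boldsymbol{\beta}, N, \delta, \delta'; v)| \ll \frac{N (\log N)^{O_k(1)}}{\delta} \ll N \exp(-\sqrt{\log N}) (\log N)^{O_k(1)},$$
and summing over $\delta'$, $v$, and $\delta$ produces $\mathcal{E}^{\star}(\boldsymbol{\beta}, N, Q) \ll N^2 Q \exp(-\sqrt{\log N}) (\log N)^{O_k(1)}$ in place of the Siegel--Walfisz bound $\ll N^2 Q (\log N)^{-A}$.

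Collecting all error terms and taking a square root yields the target estimate $MN \exp(-\sqrt{\log N}/2)$, the factor $1/2$ arising from the square root intrinsic to the dispersion method. The main obstacle is to verify that, once $q$ is restricted to primes, every occurrence of the Siegel--Walfisz hypothesis in the proof of Theorem \ref{Generalresult} is indeed of the restricted form above, i.e., the auxiliary moduli $\delta$ always lie in $\{1\} \cup \{\text{primes} \sim Q\}$, with no residual dependence on Siegel--Walfisz for a genuinely small $\delta$. This requires a careful line-by-line revisit of the dispersion bookkeeping from the proof of Theorem \ref{Generalresult} --- in particular, one must check that the $\delta$-sum underlying the appearance of $\mathcal{E}^{\star}$ in that proof inherits the primality restriction from the outer $q$-sum --- but no new analytic ingredient beyond Shiu's inequality is required.
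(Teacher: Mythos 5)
Your proposal tracks the paper's own proof quite closely: both run the dispersion argument exactly as in Theorem~\ref{Generalresult} with $c_q$ supported on primes, reach the quantity $T(Q)$, observe that primality of $q_1=\delta k_1$ and $q_2 = \delta k_2$ forces $\delta$ into $\{1\}\cup\{\text{primes}\sim Q\}$, note $E^\star(\boldsymbol\beta,N,1,1;k)=0$, and then replace the Siegel--Walfisz input by an unconditional estimate that profits from $Q\geq\exp(\sqrt{\log N})$. The only substantive difference is how that last estimate is obtained: the paper converts $\sum_{(\delta',\delta)=1}|E^\star(\boldsymbol\beta,N,\delta,\delta';1)|^2$ into a character sum by orthogonality and applies the large sieve inequality, yielding the clean factor $(N+Q^2)\|\boldsymbol\beta\|_{2,N}^2/Q^2$, whereas you propose to bound $E^\star$ pointwise via Shiu.

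One caution: your pointwise claim $|E^\star(\boldsymbol\beta,N,\delta,\delta';v)|\ll N(\log N)^{O_k(1)}/\delta$ is false once $\delta\sim Q$ exceeds $N^{1-\epsilon}$ --- Shiu's lemma does not apply there, and the progression $n\equiv\delta'\pmod\delta$ with $n\sim N$ then contains $O(1)$ terms, so $E^\star$ can be of size $N^{o(1)}$ rather than $N/\delta$. The estimate you actually need is the $L^2$ bound over $\delta'$,
\begin{equation*}
\sum_{(\delta',\delta)=1}|E^\star(\boldsymbol\beta,N,\delta,\delta';1)|^2 \ \ll\ N(\log N)^{O_k(1)} + \frac{N^2(\log N)^{O_k(1)}}{\delta},
\end{equation*}
the first (diagonal) term coming from $\sum_n|\beta_n|^2$ and being the one your pointwise claim misses. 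This corrected $L^2$ bound is exactly what the large sieve gives in one step and still yields $\mathcal{E}^\star\ll N^2 Q\exp(-\sqrt{\log N})(\log N)^{O_k(1)}$ in the restricted sense you describe, so the architecture of your argument survives; you just cannot sum a pointwise $N/\delta$ estimate. (Also, since $E$ is complex-valued, the extraction step should use unimodular complex numbers $c_q$, not signs $\eta_q\in\{\pm1\}$, but this is cosmetic.)
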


In particular \eqref{eq:ineqprimes} is true for $Q = X^{1/2}$ and $\exp((\log X)^{\varepsilon}) \leq N \leq X^{1/72 - \varepsilon}$.
One can formulate the analogue of Corollary \ref{cor:Main} for Theorem \ref{NOS-W}, but since the details are very similar we forego this. 

Before we embark on the proof of Theorem \ref{Generalresult} and \ref{NOS-W} let us quickly explain where lies, on the technical level, the main difference compared to earlier dispersion estimates. As we already remarked several times in the introduction previous dispersion estimates required one to take $N > X^{2 \delta}$ if one was aiming to achieve a level of distribution $Q = X^{1/2 + \delta - \varepsilon}$ with $\delta > 0$ small. 

Our proof starts in the usual way, by applying the dispersion method of Linnik, and after several transformations, the main problem boils down to giving a non-trivial bound for the following trilinear sum 
 $$
\Sigma (U,V,W):= \frac{1}{U} \sum_{u\sim U}\  \sum_{v\sim V}\  \sum_{w\sim W} x_u y_v z_w e \Bigl( u \frac{\overline v}{w}\Bigr),
 $$
 where $x_u$, $y_v$ and $z_w$ are unspecified coefficients, with modulus less than $1$. 

 Let us explain in more detail: In the situation in which $N = M^{o(1)}$ we roughly find that $Q^{1 - o(1)} \leq V, W \leq Q^{1 + o(1)}$ and $1 \leq U \leq Q^{2 + o(1)} / M$, while the bound that we are looking for is $M N^2 X^{-\varepsilon}$. Thus we notice that the trivial bound is $Q^{2 + o(1)}$ while any non-trivial bound of the form $Q^{2 - \delta}$ for some fixed $\delta > 0$ would be sufficient to establish an exponent of distribution $\frac{1}{2 - \delta} > \tfrac{1}{2}$. 

 If one were to follow through the proof of previous dispersion estimates we would apply here Cauchy-Schwarz on the $v$ variable in order to smoothen it. This then leads after Poisson summation to the problem of bounding $Q^{o(1)}$ Kloosterman sums of modulus $Q^{2 + o(1)}$. Subsequently applying the Weil bound fails to recover the trivial bound! Another option would be to apply spectral theory and this remains an interesting possibility that deserves to be explored further. However one can circumvent these difficulties by using the results of Bettin-Chandee \cite{Be-Ch} and Duke-Friedlander-Iwaniec \cite{D-F-I}. Those rely on a variant of the amplification method which is particularly efficient in the (most difficult) regime in which $V$ and $W$ are nearby.  

 However when $N$ is not tiny it is a useful idea to apply Cauchy-Schwarz followed by spectral theory. This is the approach taken in earlier dispersion estimates. In this case the application of Cauchy-Schwarz leads to a diagonal term that creates the condition $Q \leq N^{1/2} X^{1/2}$. However since $N$ is no longer tiny this diagonal term is not troublesome. 

 We make two closing remarks. First of all, in \cite[Conjecture 1]{BCR} is stated an optimal conjectural refinement of the bounds of Bettin-Chandee.
 Using this conjectural bound in Section \ref{se:tran} instead of the bounds of Bettin-Chandee leads to an optimal form of Corollary \ref{cor:Main} valid for all $Q \leq X^{1 - \delta}$ and $N < X^{1/2 - \delta}$, for any fixed $\delta > 0$.  The conjecture \cite[Conjecture 1]{BCR} is however very deep; one of its consequences is the Lindel\"of hypothesis. 
%
 %
 Secondly, in the companion paper \cite{Fo-Ra} we investigate another way of applying the Linnik dispersion method. By using the Cauchy--Schwarz inequality in a different way to bound $\vert \Delta \vert$ (see  \eqref{Cauchy} below)  we prove that we can also pass through the barrier  $Q = X^{1/2}$ as soon as $N$ is slightly larger than $X^{1/2}$.
 \\
 \newline
 \noindent{\bf Acknowledgement.} 
 The second author would like to thank the Laboratoire de Math\'{e}matiques
 d'Orsay for its invitation and for its hospitality. The second author also acknowledges support of an NSERC DG grant, the CRC program and a Sloan Fellowship. We would like to thank Sandro Bettin an James Maynard for their comments on the paper. 

\section{Conventions and lemmas}

\subsection{Conventions} Throughout $X$ will stand for $M N$ and $\mathcal{L}$ for $\log (2 M N)$. 
 
The letters $\kappa_1, \kappa_2, \kappa_3, ...$  will denote  functions only depending on the parameter $k$ appearing in the size condition $|\alpha_{m}| \leq \tau_k(m)$ and $|\beta_{n}| \leq \tau_k(n)$. Although  possible, there is no advantage in explicitly writing the values of $\kappa_1$, $\kappa_2$, $\kappa_3$,...
Similarly $C_1$, $C_2$, $C_3$,... will denote absolute positive constants whose value could be specified explicitly but there is (at the moment) no good reason to do so. 

If $f$ is a smooth real function, its Fourier transform is defined by
$$
\hat f (\xi) = \int_{-\infty}^\infty f(t) e( -\xi t) \, {\rm d} t,
$$
where $e(\cdot)= \exp (2 \pi i \cdot).$

\subsection{Lemmas}
Our first lemma is  a  classical finite version of  the Poisson summation formula 
in arithmetic progressions, with a good error term.

\begin{lemma} \label{existenceofpsi}There exists  
  a  smooth function $\psi\ : \ \mathbb R \longrightarrow \mathbb R^+$, compactly supported in $[1/2, 5/2]$ such that $\psi(t) = 1$ for $1 \leq t \leq 2$, and whose derivatives satisfy the inequality
\begin{equation*}
\vert \psi^{(j+1)} (t)\vert \leq  8 (4^j\, j!)^2,
\end{equation*}
for every integer $j$ and  every real $t$.  Furthermore,  uniformly for  integers $a$ and $q \geq 1$,  for  $M \geq 1$ and $H\geq  (q/M)\log ^4 2M$
one has the equality
\begin{equation}\label{ineqpsi}
\sum_{m\equiv a \bmod q} \psi \Bigl( \frac{m}{M}\Bigr) =\hat{\psi}(0)  \frac {M}{q}
+ \frac{M}{q} \sum_{0 < \vert h \vert \leq H}e \bigl( \frac{ ah}{q} \bigr)\hat \psi \Bigl( \frac{h}{q/M}\Bigr) +O(M^{-1}).
\end{equation}
Finally, uniformly for $q\geq 1$ and $M\geq 1$ one has the equality
\begin{equation}\label{Poissoncoprime}
\sum_{(m,q)=1} \psi \Bigl( \frac{m}{M}\Bigr) =\frac{\varphi (q)}{q}\hat{\psi}(0)  M + O \bigl(\tau_2 (q) \log^4 2M \bigr).
\end{equation}

\end{lemma}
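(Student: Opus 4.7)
The plan is to construct $\psi$ explicitly as an infinite convolution of smooth bumps of geometrically decreasing widths, then deduce \eqref{ineqpsi} from Poisson summation in arithmetic progressions together with integration by parts to truncate the dual sum, and finally deduce \eqref{Poissoncoprime} from the same Poisson argument by M\"obius inversion on the coprimality condition.

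For the existence of $\psi$, I would take $\psi = \chi * f_1 * f_2 * \cdots$, where $\chi$ is the indicator of a closed interval strictly between $[1,2]$ and $[1/2,5/2]$, and each $f_k$ is a smooth non-negative bump with $\int f_k = 1$, supported in $[-\varepsilon_k, \varepsilon_k]$, with $\sum_k \varepsilon_k$ small enough that $\mathrm{supp}(\psi) \subset [1/2, 5/2]$ and $\psi \equiv 1$ on $[1,2]$. Choosing $f_k(t) = \varepsilon_k^{-1} f(t/\varepsilon_k)$ with a fixed profile $f$ and $\varepsilon_k$ decaying geometrically (so that $\|f_k'\|_1 \leq C\varepsilon_k^{-1}$ with $\sum \varepsilon_k$ convergent) and distributing the $j{+}1$ derivatives across the first $j{+}1$ convolution factors via Young's inequality yields
$$
\|\psi^{(j+1)}\|_\infty \leq \prod_{k=1}^{j+1} \|f_k'\|_1.
$$
A careful balancing of the $\varepsilon_k$ against the $j!$-type combinatorial factors that appear when several derivatives land on the same $f_k$ produces the stated bound $8(4^j j!)^2$.

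For \eqref{ineqpsi}, write $m = a + nq$ and apply Poisson to $n \mapsto \psi((a+nq)/M)$. A change of variable identifies the Fourier coefficient at frequency $h$ as $(M/q)\,e(ha/q)\,\hat\psi(hM/q)$, giving
$$
\sum_{m \equiv a \bmod q} \psi\Bigl(\frac{m}{M}\Bigr) = \frac{M}{q} \sum_{h \in \mathbb{Z}} e\Bigl(\frac{ah}{q}\Bigr)\hat\psi\Bigl(\frac{hM}{q}\Bigr).
$$
The $h = 0$ term is the main term. For $h \neq 0$, integration by parts $j{+}1$ times combined with the derivative bound yields $|\hat\psi(\xi)| \leq 16(4^j j!)^2 (2\pi|\xi|)^{-(j+1)}$ for every $j \geq 0$, and optimizing $j$ proportional to $\sqrt{|\xi|}$ via Stirling produces an upper bound of shape $\exp(-c\sqrt{|\xi|})$. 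For $|h| > H$, we have $|\xi| = hM/q > \log^4 2M$, so the tail sums to an error much smaller than $M^{-1}$, as required.

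For \eqref{Poissoncoprime}, M\"obius inversion gives $\sum_{(m,q)=1}\psi(m/M) = \sum_{d \mid q}\mu(d)\sum_k \psi(dk/M)$, and each inner sum is the special case $a = 0$, modulus $d$ of the preceding identity. Summing the main contributions produces $\hat\psi(0)M \sum_{d \mid q} \mu(d)/d = (\varphi(q)/q)\hat\psi(0)M$, while the error contributions, summed over $d \mid q$ and $h \neq 0$ using the same decay estimate for $\hat\psi$, are bounded by $O(\tau_2(q)\log^4 2M)$ after a routine dyadic split. The main obstacle in the entire argument is producing the construction in paragraph two with the sharp explicit constants $8$ and $4$; once the derivative bound is in hand, the Poisson step and the M\"obius deduction are routine.
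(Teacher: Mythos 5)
The paper's proof of this lemma is essentially two sentences of citation: it cites \cite[Corollary, p.~368]{B-F-I2} for the existence of $\psi$ (after rescaling), cites \cite[Lemma 7]{B-F-I2} for \eqref{ineqpsi}, and then deduces \eqref{Poissoncoprime} from \eqref{ineqpsi} via M\"obius inversion with the trivial bound $|\hat\psi| = O(1)$. You instead try to re-derive everything from scratch, which is a legitimate but different project. Your Poisson-summation derivation of \eqref{ineqpsi} and your M\"obius deduction of \eqref{Poissoncoprime} are sound (though note that the hypothesis $H \geq (q/M)\log^4 2M$ makes the dual sum empty for $d \leq M/\log^4 2M$, so the trivial bound $\hat\psi \ll 1$ already gives \eqref{Poissoncoprime}; the decay estimate for $\hat\psi$ is not needed there).

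The genuine gap is in the construction of $\psi$. You choose widths $\varepsilon_k$ decaying \emph{geometrically}, say $\varepsilon_k = c\,r^k$ with $0<r<1$, and observe via Young's inequality (one derivative per factor) that $\|\psi^{(j+1)}\|_\infty \le \prod_{k=1}^{j+1}\|f_k'\|_1 \asymp \prod_{k=1}^{j+1}\varepsilon_k^{-1}$. But with geometric widths this product equals $c^{-(j+1)}\,r^{-(j+1)(j+2)/2}$, which is of order $\exp(c'j^2)$ --- \emph{much} larger than the target $8(4^j j!)^2 = \exp(O(j\log j))$. Geometric decay is too aggressive. To get a bound of shape $(C^j j!)^2$ you need \emph{polynomially} decaying widths, e.g.\ $\varepsilon_k \asymp 1/k^2$, which still makes $\sum_k\varepsilon_k$ converge (so the support stays inside $[1/2,5/2]$) while giving $\prod_{k\le j+1}\varepsilon_k^{-1} \asymp C^{j}((j+1)!)^2$, which is exactly the right order. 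Furthermore, your remark about ``$j!$-type combinatorial factors that appear when several derivatives land on the same $f_k$'' is a red herring: in the distribution scheme you describe, each factor receives at most one derivative, so no multinomial count arises; the $(j!)^2$ comes entirely from the product $\prod_{k\le j+1}\varepsilon_k^{-1}$. (The polynomially-decaying construction is precisely what \cite[p.~368]{B-F-I2}, which the paper cites, carries out.) Once the widths are corrected, the rest of your argument --- the superpolynomial decay $|\hat\psi(\xi)| \ll \exp(-c\sqrt{|\xi|})$ obtained by optimizing $j$, the truncation at $|h|\le H$, and the M\"obius step --- goes through.
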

\begin{proof} In \cite[Corollary, p. 368]{B-F-I2} such a function $\psi$ (named $\sigma$ there) is built  with $A=1$ and $B=2$ but with the interval $[1/2, 5/2]$ replaced by $[-1, 4]$.  By a re-scaling, we obtain the inequality concerning the derivative  $\psi^{(j+1)}$.
   The equality 
\eqref{ineqpsi} is \cite[Lemma 7]{B-F-I2} with a different normalization. 

Finally \eqref{Poissoncoprime} is a consequence of \eqref{ineqpsi} combined with $$\mathbf{1}_{(m,q) = 1} = \sum_{\substack{d | m \\ d | q}} \mu(d)$$
and the trivial inequality $\vert \hat \psi \vert =O(1)$.
\end{proof}
We cite below a classical upper bound of Shiu's \cite[Theorem 2]{Sh} for multiplicative functions in arithmetic progressions.

\begin{lemma}\label{le:shiu}
  Let $\varepsilon, k > 0$ be given. Let $g : \mathbb{N} \rightarrow \mathbb{C}$ be a multiplicative function such that $|g(n)| \leq \tau_k(n)$. Then, uniformly in $2 \leq x^{\varepsilon} \leq y \leq x$, $q \leq y x^{-\varepsilon}, (a,q) = 1$,
  $$
  \sum_{\substack{x - y \leq n \leq x \\ n \equiv a \pmod{q}}} |g(n)| \ll_{k, \varepsilon} \frac{y}{\varphi(q)} \cdot \prod_{p \leq x} \Big ( 1 + \frac{|g(p)| - 1}{p} \Big ). 
  $$
  \end{lemma}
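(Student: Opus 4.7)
The plan is to follow Shiu's original elementary argument. Fix a threshold $z = y^{\eta}$ with $\eta = \eta(k, \varepsilon) > 0$ small, and for each $n$ in the sum decompose $n = m b$, where $m$ collects the prime-power factors of $n$ with prime $\leq z$ and $b$ those with prime $> z$. Multiplicativity gives $|g(n)| = |g(m)||g(b)|$. Because $b \leq x$ has all prime factors $> z = y^{\eta} \geq x^{\eta \varepsilon}$, one has $\omega(b) \leq 1/(\eta \varepsilon)$ and every exponent $j$ in $p^{j} \| b$ satisfies $j = O_{\eta,\varepsilon}(1)$, so $|g(b)| \leq \tau_{k}(b) = O_{k, \varepsilon}(1)$ uniformly.

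Next, I would swap the order of summation and, for each fixed smooth $m \leq y x^{-\varepsilon/2}$ with $(m, q) = 1$, estimate the inner sum over integers $b \in ((x - y)/m, x/m]$ with $b \equiv a \overline{m} \pmod{q}$ and $(b, P(z)) = 1$ using the Rosser--Iwaniec upper bound sieve. This yields
$$
\sum_{\substack{(x-y)/m < b \leq x/m \\ b \equiv a \overline{m} \pmod{q} \\ (b, P(z)) = 1}} 1 \;\ll\; \frac{y/m}{\varphi(q) \log z},
$$
uniformly in the stated ranges. The complementary tail $m > y x^{-\varepsilon/2}$ is handled by Rankin's trick applied to $|g(m)| \leq \tau_{k}(m)$ and is easily shown to be negligible. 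Summing the main range over $m$ with weight $|g(m)|$ produces
$$
\frac{y}{\varphi(q) \log z} \prod_{\substack{p \leq z \\ p \nmid q}} \Big( 1 + \frac{|g(p)|}{p} + \frac{|g(p^{2})|}{p^{2}} + \cdots \Big),
$$
where the prime-power terms with $j \geq 2$ collapse into an $O_{k}(1)$ multiplicative factor, since $\sum_{p} \sum_{j \geq 2} \tau_{k}(p^{j}) p^{-j}$ converges.

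Finally, I would combine the $1/\log z$ saving with Mertens' theorem, which gives $\log z \asymp \prod_{p \leq z}(1 - 1/p)^{-1}$, to rewrite the bound as
$$
\frac{y}{\varphi(q)} \prod_{p \leq z} \Big( 1 + \frac{|g(p)| - 1}{p} \Big).
$$
Extending the product to all $p \leq x$ costs only a bounded factor: the tail $z < p \leq x$ contributes a convergent Euler product, using $|g(p)| \leq k$ and $\sum_{p} 1/p^{2} < \infty$. This produces the claimed inequality. The principal obstacle is calibrating $\eta$ so that the Rosser--Iwaniec error term stays admissible uniformly across the full range $x^{\varepsilon} \leq y \leq x$ and for all moduli $q \leq y x^{-\varepsilon}$; choosing $\eta$ sufficiently small in terms of $\varepsilon$ ensures that the effective sieve level $y/(m q)$ dominates any fixed power of $z$ demanded by the fundamental lemma, and all implicit constants depend only on $k$ and $\varepsilon$ as required.
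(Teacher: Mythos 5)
The paper does not prove this lemma; it cites it directly as a theorem of Shiu, so there is no in-paper argument to compare against. Your sketch is a fair outline of the opening move of Shiu's proof (decompose $n = mb$ into $z$-smooth and $z$-rough parts, note $|g(b)| = O_{k,\varepsilon}(1)$, sieve over $b$), but it has a genuine gap at precisely the step where Shiu's proof is delicate.

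The cutoff $m \leq y x^{-\varepsilon/2}$ is miscalibrated: for the upper-bound sieve to produce the saving $\ll (y/m)/(\varphi(q)\log z)$, the sieve level $y/(mq)$ must exceed a fixed power of $z$, but with $m$ near $y x^{-\varepsilon/2}$ and $q$ near its allowed maximum $y x^{-\varepsilon}$ one has $y/(mq) \asymp x^{3\varepsilon/2}/y$, which falls below $1$ as soon as $y > x^{3\varepsilon/2}$; the interval for $b$ then contains at most one integer, and the quoted sieve bound is false. The correct cutoff must involve $q$, for instance $m \leq y/(q z^{C})$, after which the sieve level is $\geq z^{C}$ automatically; shrinking $\eta$ plays no role in securing the sieve level, contrary to your closing remark. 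The deeper gap is the complementary range: the assertion that Rankin's trick ``easily'' disposes of the $z$-smooth $m$ above the cutoff is not justified. For each such $m$ the count of admissible $b$ is $\ll y/(mq) + 1$, and the ``$+1$'' contribution, summed over smooth $m$ weighted by $\tau_k(m)$, is not controlled by a Rankin bound on $\sum_{m > T} \tau_k(m)/m$ over $z$-smooth $m$; nor does such a Rankin bound with a fixed $\eta$, by itself, produce the required $1/\log x$-type saving inherent in the target. In Shiu's proof this complementary range is handled by a nontrivial combinatorial splitting of $n$ according to the sizes and multiplicities of its prime factors, together with a density estimate for $z$-smooth numbers; that is the substance of the argument, and your sketch elides it.
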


  An immediate consequence of Shiu's theorem is the following Lemma for the divisor function. We cite this special case since it will be frequently used. For a more precise statement see \cite[Theorem 2]{Sh}
  
\begin{lemma}\label{dkinarith} 
 For every $k\geq 1$,  $\ell\geq 1$, and $\varepsilon >0$, there exists a constant $C(k,\ell, \varepsilon) > 0$
such that for all $x >y> x^\varepsilon$, $1 \leq q \leq y x^{-\varepsilon}$, and every integer $a$ co-prime to $q$ we have, 
$$
\sum_{\substack{x-y <n\leq x \\ n \equiv a \pmod q}} \tau_k^\ell (n) \leq C({k, \ell, \varepsilon})\,\frac{ y}{\varphi(q)} (\log x)^{k^\ell -1}.
$$
\end{lemma}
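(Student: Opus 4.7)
The plan is to deduce Lemma \ref{dkinarith} directly from Shiu's bound (Lemma \ref{le:shiu}) applied to the multiplicative function $g(n) := \tau_k^{\ell}(n)$. First I would verify the two hypotheses needed to feed $g$ into Lemma \ref{le:shiu}: (a) multiplicativity of $g$, which is immediate from multiplicativity of $\tau_k$; and (b) the pointwise bound $|g(n)| = \tau_k^{\ell}(n) \le \tau_K(n)$ for some integer $K = K(k,\ell)$. For (b) it suffices to check the inequality at prime powers (since both sides are multiplicative and take the same value on distinct primes), and one can take $K = k^{\ell}$: at $p^j$ one has $\tau_k^{\ell}(p^j) = \binom{j+k-1}{k-1}^{\ell}$ and $\tau_{k^{\ell}}(p^j) = \binom{j + k^{\ell} - 1}{k^{\ell} - 1}$, and the latter dominates the former (this is a standard comparison of binomial products; if desired one could simply replace $K$ by a larger constant depending on $k$ and $\ell$ to make the comparison immediate).

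With these verifications in hand, Lemma \ref{le:shiu} yields, uniformly in the stated range of $x,y,q,a$,
$$
\sum_{\substack{x-y < n \le x \\ n \equiv a \pmod{q}}} \tau_k^{\ell}(n) \ll_{k,\ell,\varepsilon} \frac{y}{\varphi(q)} \prod_{p \le x} \Big( 1 + \frac{\tau_k^{\ell}(p) - 1}{p} \Big) = \frac{y}{\varphi(q)} \prod_{p \le x} \Big( 1 + \frac{k^{\ell} - 1}{p} \Big),
$$
since $\tau_k^{\ell}(p) = k^{\ell}$ at every prime.

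The remaining step is the Mertens-type estimate
$$
\prod_{p \le x} \Big( 1 + \frac{k^{\ell} - 1}{p} \Big) \ll_{k,\ell} (\log x)^{k^{\ell} - 1},
$$
which follows by taking logarithms and applying $\sum_{p \le x} 1/p = \log\log x + O(1)$, together with the convergence of $\sum_p 1/p^2$. Combining this with the previous display produces the claimed bound with $C(k,\ell,\varepsilon)$ absorbing the implied constants from Lemma \ref{le:shiu} and from Mertens.

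The only potential obstacle is the pointwise comparison $\tau_k^{\ell} \le \tau_K$, but as noted this is an elementary prime-power check; no arithmetic input beyond Shiu's theorem and Mertens' estimate is required.
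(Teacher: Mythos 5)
Your proof is correct and follows exactly the route the paper intends: the paper introduces Lemma \ref{dkinarith} as an ``immediate consequence of Shiu's theorem'' (Lemma \ref{le:shiu}), and you supply precisely the missing details — the pointwise majorization of $\tau_k^\ell$ by some $\tau_K$, the observation that $\tau_k^\ell(p)=k^\ell$, and the Mertens estimate $\prod_{p\le x}(1+(k^\ell-1)/p)\ll(\log x)^{k^\ell-1}$.
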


Our main tool is a bound for trilinear forms in Kloosterman fractions. It is due to Bettin and Chandee  \cite[Theorem 1]{Be-Ch}. Their result has at its origin the paper of Duke, Friedlander and Iwaniec (\cite [Theorem 2]{D-F-I}) which deals with bilinear forms. These two papers show cancellations in  exponential sums involving Kloosterman fractions $a \overline m/n$ with $m \asymp n$. The result of Bettin-Chandee produces extra cancellations when summing over $a$, a feature that is used in the proof of Theorem \ref{Generalresult}.

\begin{lemma}\label{trilinear} Let $\varepsilon > 0$ be given. There exists a constant $C = C(\varepsilon) > 0$ such that for every non-zero integer $\vartheta$, and for every sequence of complex numbers $\boldsymbol \alpha = (\alpha_{m})$, $\boldsymbol \beta = (\beta_{n})$, and $\boldsymbol \nu = (\nu_{a})$, and for every $A, M, N \geq 1$, we have, 
 \begin{multline*}
\Bigl\vert \, \sum_{a\sim A} \sum_{m\sim M} \sum_{n \sim N} \alpha (m) \beta (n) \nu (a) e \Bigl(\vartheta \frac{a \overline m}{n} \Bigr)\,   \Bigr\vert \leq C(\varepsilon) \Vert \boldsymbol \alpha\Vert_{2, M} \, \Vert \boldsymbol \beta\Vert_{2, N} \, \Vert \boldsymbol \nu\Vert_{2, A} 
\\
\times  \Bigl( 1+ \frac{\vert \vartheta\vert A}{MN}\Bigr)^\frac{1}{2}  \Bigl( (AMN)^{\frac{7}{20} + \varepsilon} \, (M+N) ^\frac{1}{4} +(AMN)^{\frac{3}{8} +\varepsilon} (AN+AM) ^\frac{1}{8}
\Bigr). 
 \end{multline*}
 \end{lemma}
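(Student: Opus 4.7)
The plan is to reproduce the amplification argument of Bettin--Chandee, which refines the bilinear Kloosterman fraction estimates of Duke--Friedlander--Iwaniec by exploiting the extra averaging over the $a$-variable. The cornerstone identity throughout is the reciprocity formula
\[
\frac{a\overline{m}}{n} \equiv \frac{a}{mn} - \frac{a\overline{n}}{m} \pmod{1},
\]
which interchanges the roles of $m$ and $n$; this symmetry is what ultimately produces the symmetric factor $(M+N)^{1/4}$ (resp.\ $(AN+AM)^{1/8}$) on the right-hand side of the claimed bound. As a preliminary step I would insert compactly supported smooth weights (using, for instance, the function $\psi$ of Lemma \ref{existenceofpsi}) on each of the three variables so that the subsequent applications of Poisson summation are legitimate.

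The main execution proceeds by Cauchy--Schwarz in the pair $(m,a)$, pulling out $\|\boldsymbol\alpha\|_{2,M}\|\boldsymbol\nu\|_{2,A}$, thereby reducing to bounding $\sum_{m\sim M}\sum_{a\sim A}\big|\sum_{n\sim N}\beta(n)\,e\bigl(\vartheta a\overline m /n\bigr)\big|^{2}$. Expanding the square splits off a diagonal contribution $n_{1}=n_{2}$ (which is harmless and gives the $\|\boldsymbol\beta\|_{2,N}^{2}$ piece) from an off-diagonal sum over $n_{1}\neq n_{2}$ of the shape $\sum_{m\sim M}\sum_{a\sim A} e\bigl(\vartheta a(\overline m/n_{1}-\overline m/n_{2})\bigr)$. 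Completing the $m$-sum via Poisson turns this into a (weighted) incomplete Kloosterman sum modulo $n_{1}n_{2}/(n_{1},n_{2})$, and the Weil bound gives a first estimate. The essential refinement is to apply reciprocity before bounding so that the $a$-variable becomes available for a genuine amplification: after reciprocity one may view the problem as estimating a sum of Kloosterman sums amplified by a third variable, which can be attacked either by a second Cauchy--Schwarz plus Weil, or by the large-sieve / spectral input of Deshouillers--Iwaniec type. Optimising the split between the three variables produces the two distinct terms $(AMN)^{7/20+\varepsilon}(M+N)^{1/4}$ and $(AMN)^{3/8+\varepsilon}(AN+AM)^{1/8}$ in the stated bound.

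The hard part is the near-diagonal regime $M\asymp N$, which is precisely the hardest case in the DFI framework: there the standard completion-plus-Weil step only barely recovers the trivial bound, and the extra cancellation must come from the averaging in $a$. Balancing the Cauchy--Schwarz with reciprocity is delicate because applying it too early collapses the amplifying variable, while applying it too late leaves non-smooth combinations of Farey-like fractions that are difficult to bound. The factor $(1+|\vartheta|A/(MN))^{1/2}$ is the loss from the enlarged conductor $mn$ appearing in the $a/(mn)$ summand of the reciprocity identity, which controls the effective modulus of the completed sum; if this factor were not permitted, then the reciprocity swap would be unavailable for large $|\vartheta|A$ and the symmetric $(M+N)^{1/4}$ factor would be unreachable. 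Carrying out these manipulations with the correct bookkeeping, exactly as in \cite[Theorem 1]{Be-Ch}, would complete the proof.
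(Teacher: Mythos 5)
The paper does not prove Lemma~\ref{trilinear} at all: it is a verbatim citation of Bettin--Chandee \cite[Theorem 1]{Be-Ch}, and the text accompanying the statement simply attributes it to that paper and to its bilinear ancestor \cite[Theorem 2]{D-F-I}. There is therefore no ``paper's proof'' to reproduce, and the appropriate content here would have been a short attribution, not an argument.

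Your proposal is instead a sketch of how you imagine the Bettin--Chandee proof goes, and its final sentence defers the actual execution back to their paper. As a sketch it gestures at the right toolbox (the reciprocity identity $a\overline m/n\equiv a/(mn)-a\overline n/m\pmod 1$, Cauchy--Schwarz, completion/Poisson, Weil, and an amplification over the $a$-variable), but it does not rise to a proof, and the specific chain of moves you describe is not internally verified. In particular: (i) applying Cauchy--Schwarz in $(m,a)$ to pull out $\Vert\boldsymbol\alpha\Vert_2\Vert\boldsymbol\nu\Vert_2$ at the outset, then expanding in $n_1,n_2$ and completing in $m$, is essentially the Duke--Friedlander--Iwaniec bilinear scheme with $a$ carried as a dead parameter --- by itself it cannot produce any gain from the $a$-average, which is the entire point of the trilinear improvement; (ii) the step you single out as ``essential'' (applying reciprocity first so that $a$ ``becomes available for genuine amplification'') is exactly the step you leave unspecified, so nothing in the sketch explains how the exponents $7/20$ and $3/8$, or the factor $(AN+AM)^{1/8}$, actually arise; and (iii) the justification offered for the factor $(1+|\vartheta|A/(MN))^{1/2}$ is plausible in spirit but is not connected to a concrete summation-by-parts computation. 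Since the statement being proved is a quantitative inequality with specific numerology, an outline that does not track exponents cannot be checked, and a reader following your sketch would not be able to reconstruct the cited bound. The honest move here is to do what the paper does: state the lemma and cite \cite[Theorem 1]{Be-Ch}, rather than present an uncommitted paraphrase of the proof as if it were one.
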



\section{Preparation of the dispersion}\label{dispersion}  We now follow the computation of a dispersion as it appears in \cite{FoActaMath}, \cite{BFI1}  and \cite{FoAnnENS} for instance.
If $(q,a) = 1$ define $c_q$ to be a complex number of modulus $1$ such that   $c_{q} E(\boldsymbol \alpha, \boldsymbol \beta, M, N , q, a) = |E (\boldsymbol \alpha, \boldsymbol \beta, M, N, q, a)|$. If $(q,a) > 1$ then set $c_{q} = 0$. Inverting summations in the definition \eqref{defDelta}, applying  the  Cauchy--Schwarz inequality, inserting the $\psi$--function of Lemma \ref{existenceofpsi} and finally expanding the square we see that
\begin{align}\label{Cauchy}
|\Delta (\boldsymbol \alpha, \boldsymbol \beta, M, N, Q, a)| &=
\Big | \sum_{m\sim M}  \alpha_m  \Bigl(  \underset{\substack{q \sim Q\ n\sim N \\ mn \equiv a \bmod q}}{\sum\  \sum } c_q \beta_n  - 
\underset{\substack{q \sim Q\ n\sim N \\ (mn, q)=1}}{\sum\  \sum } \frac{c_q}{\varphi (q)} \beta_n \Bigr)\Big | \nonumber \\
& \leq \Vert \boldsymbol \alpha\Vert_{2,M} \Bigl\{ \sum_{m} \psi \Bigl ( \frac{m}{M}\Bigr ) \Bigl |  \underset{\substack{q \sim Q\ n\sim N \\ mn \equiv a \bmod q}}{\sum\  \sum } c_q \beta_n  -
\underset{\substack{q \sim Q\ n\sim N \\ (mn, q)=1}}{\sum\  \sum } \frac{c_q }{\varphi (q)}\beta_n \Bigr |^2\Bigr\}^\frac{1}{2}\nonumber \\
& \leq  \Vert \boldsymbol \alpha\Vert_{2,M}\Bigl\{ W(Q)- 2 \Re V(Q) +U (Q)\Bigr\}^{\frac 12}, 
\end{align}
where

\begin{enumerate}
  \item the sum  $U(Q)$ is defined by
\begin{equation}\label{defU}
U(Q)= \sum_m\ \psi 
\Bigl( \frac{m}{M} \Bigr)\Bigl | \, \underset{\substack{q \sim Q\ n\sim N \\ (mn, q)=1}}{\sum\  \sum } \frac{c_q }{\varphi (q)}\beta_n \,\Bigr |^2,
\end{equation}
\item the sum $V(Q)$ is defined by 
\begin{equation}\label{defV}
V(Q)= \sum_{m} \psi \Bigl( \frac{m}{M} \Bigr) \Bigl(  \underset{\substack{q \sim Q\ n\sim N \\ mn \equiv a \bmod q}}{\sum\  \sum } c_q \beta_n \Bigr) \overline{\Bigl(  
\underset{\substack{q \sim Q\ n\sim N \\ (mn, q)=1}}{\sum\  \sum } \frac{c_q}{\varphi (q)} \beta_n \Bigr)},
\end{equation}
\item and the most important sum  $W(Q)$ is defined by 
\begin{equation}\label{defW}
W(Q) =  \sum_{m} \psi \Bigl( \frac{m}{M} \Bigr)  \Bigl |  \underset{\substack{q \sim Q\ n\sim N \\ mn \equiv a \bmod q}}{\sum\  \sum } c_q \beta_n \Bigr |^2.
\end{equation}
\end{enumerate}

\section{Study of $U(Q)$} Expanding the square in \eqref{defU} we have the equality
\begin{equation*}
U (Q) =\sum_{q_1\sim Q} \sum_{q_2\sim Q} \frac{c_{q_1}}{\varphi (q_1)} \cdot \frac{\overline{c_{q_2}}}{\varphi (q_2)}\sum_{\substack{n_1\sim N \\ (n_1, q_1)=1}}
\sum_{\substack{n_2\sim N \\ (n_2, q_2)=1}} \beta_{n_1} \overline{\beta_{n_2}} \sum_{(m, q_1q_2)=1} \psi \Bigl( \frac{m}{M}\Bigr),
\end{equation*}
which combined with  \eqref{Poissoncoprime} of Lemma \ref{existenceofpsi} gives  the equality
\begin{multline*}
U (Q) = \hat \psi (0) M  \sum_{q_1\sim Q} \sum_{q_2\sim Q} \frac{c_{q_1}}{\varphi (q_1)} \cdot \frac{\overline{c_{q_2}}}{\varphi (q_2)} \cdot \frac{\varphi (q_1q_2)}{q_1 q_2}\,\sum_{\substack{n_1\sim N \\ (n_1, q_1)=1}}
\sum_{\substack{n_2\sim N \\ (n_2, q_2)=1}} \beta_{n_1} \overline{\beta_{n_2}} \\
+  O\bigl(  \mathcal L^{\kappa_1} N^2 \bigr).
\end{multline*}
It remains to sum over $\delta = (q_1,q_2)$ to get  the final equality
\begin{multline}\label{finalU}
U(Q) = \hat \psi (0) M\sum_\delta \frac{1}{\delta \varphi (\delta)} \underset{\substack{k_1,  k_2 \sim Q/\delta \\ (k_1,k_2)=1}}{\sum\ \sum} \frac{ c_{\delta k_1} \overline{c_{\delta k_2}}}{k_1k_2} \Bigl( \sum_{\substack{n_1 \sim N \\ (n_1, \delta k_1) =1}} \beta_{n_1} \Bigr) \, \overline{\Bigl( \sum_{\substack{n_2 \sim N \\ (n_2, \delta k_2) =1}} \beta_{n_2} \Bigr)}\\
+  O\bigl(   \mathcal L^{\kappa_1} N^2 \bigr).
\end{multline}

\section{Study of $V(Q)$} Expanding the products in \eqref{defV} and inverting summations we obtain the equality 
\begin{equation}\label{V1}
V(Q)=  \sum_{q_1\sim Q} \sum_{q_2\sim Q} c_{q_1} \frac{\overline{c_{q_2}}}{\varphi (q_2)}\sum_{\substack{n_1\sim N \\ (n_1, q_1)=1}}
\sum_{\substack{n_2\sim N \\ (n_2, q_2)=1}} \beta_{n_1} \overline{\beta_{n_2}} \sum_{\substack{m\equiv a \overline{n_1}\bmod q_1 \\ (m,q_2) =1}} \psi \Bigl( \frac{m}{M}\Bigr).
\end{equation}
Let $d$ be an integer such that $d\mid q_2$ and $(d,q_1) =1$. Let $\lambda_0 \bmod dq_1$ be the unique solution of the congruences $\lambda_0 \equiv a\overline{n_1} \bmod q_1$  and $\lambda_0 \equiv 0 \bmod d$.  By \eqref{ineqpsi} of  Lemma \ref{existenceofpsi} we have, for $H=(dq_1\mathcal L^4)/M 
$
 the equality  
\begin{align*}
\sum_{\substack{m\equiv a \overline{n_1}\bmod q_1 \\ d \vert m}} \psi \Bigl( \frac{m}{M}\Bigr) 
&= \hat \psi (0) \frac{M}{dq_1} + \frac{M}{dq_1}\sum_{1\leq \vert h \vert \leq H} e \Bigl( h\frac {\lambda_0}{dq_1}\Bigr) \hat \psi \Bigr( \frac{h}{(dq_1)/M} \Bigr) +O (M^{-1})\\
& = \hat \psi (0) \frac{M}{dq_1} + O ( \mathcal L^4).
\end{align*}
as a consequence of $\vert \hat \psi \vert = O(1).$ By the M\" obius inversion formula, we deduce the equality
$$
\sum_{\substack{m\equiv a \overline{n_1}\bmod q_1 \\ (m,q_2) =1}} \psi \Bigl( \frac{m}{M}\Bigr) =  \hat \psi (0)  \frac{M}{q_1} \sum_{\substack{d\vert q_2 \\ (d, q_1)=1}} \frac{\mu (d)}{d}
+ O \bigl( \tau_2 (q_2) \mathcal L^4 \bigr).
$$
Inserting this equality into \eqref{V1} and introducing $\delta = (q_1, q_2)$, we obtain the equality
\begin{multline*}
V(Q)= \hat \psi (0) M \sum_\delta \frac{1}{\delta \varphi (\delta)} \underset{\substack{k_1 \sim Q/\delta\ k_2 \sim Q/\delta \\ (k_1, k_2)=1}}{\sum\  \ \sum}\frac{c_{\delta k_1} \overline{c_{\delta k_2}}}{k_1 k_2} 
\Bigl( \sum_{\substack{n_1 \sim N\\ (n_1, \delta k_1) =1}} \beta_{n_1} \Bigr) \, \overline{\Bigl( \sum_{\substack{n_2 \sim N\\ (n_2, \delta k_2) =1}}
\beta_{n_2}\Bigr)}  \\ 
+ O \bigl(  \mathcal L ^{\kappa_2} N^2 Q\bigr).
\end {multline*}
Comparing with \eqref{finalU} we obtain the equality 
\begin{equation}\label{V=U}
V(Q) =U(Q) + O \bigl(  \mathcal L ^{\kappa_3}N^2 Q \bigr).
\end{equation}

\section{Study of $W(Q)$} We now turn our attention to the most delicate sum, for which we will appeal to bounds for exponential sums.
Expanding the square in \eqref{defW}
we have
\begin{equation}\label{defW1}  
W(Q)=  \sum_{q_1 \sim Q} \sum_{q_2 \sim Q} c_{q_1} \overline{c_{q_2}}\sum_{n_1 \sim N} \sum_{n_2 \sim N} \beta_{n_1} \overline{\beta_{n_2}} \sum_{\substack{m, mn_1\equiv a \bmod q_1\\ mn_2 \equiv a \bmod q_2}} \psi \Bigl( \frac{m}{M} \Bigr).
\end{equation}
It is worth noticing that since $\psi (t)=0$ out of the interval $[1/2, 5/2]$, since $n\sim N$ and since $\vert a \vert \leq X/3$ we always
have 
\begin{equation}\label{mnn=a}
mn - a\ne 0.
\end{equation}
  This remark will simplify the proof of Lemma \ref{approxW}.

\subsection{Controlling the multiplicative decomposition of the variables.}
In this subsection we want to arithmetically prepare the variables $q_1$, $q_2$, $n_1$ and $n_2$  appearing in \eqref{defW1}, so as to facilitate the application of Lemma \ref{trilinear}.
This preparation is now classical (see for instance \cite[p.235--237]{FoActaMath}).  We adopt the following notational conventions to decompose the variables $q_1, q_2, n_1, n_2$ in a unique way:
\begin{equation}\label{conv}
\begin{cases} d = (n_1,n_2)\\
n_1= d\nu_1, \ n_2 = d \nu_2,\\
\nu_{1}= d_{1}\nu'_{1}\text{ with } d_1 \vert d^\infty \text{  and } (\nu'_1 ,d) =1,\\
\delta = (q_1, q_2),\\
q_1 = \delta k_1, \, q_2 = \delta k_2,\\
k_1= \delta_1 k'_1 \text{ with } \delta_1 \vert \delta^\infty \text{  and } (k'_1 ,\delta) =1,\\
k_2= \delta_2 k'_2 \text{ with } \delta_2 \vert \delta^\infty \text{  and } (k'_2 ,\delta) =1.
\end{cases}
\end{equation}

Since $(a, q_1 q_2) = 1$ and we are summing over integers $m$ such that $mn_1 \equiv a \bmod q_1$ and $mn_2 \equiv a \bmod q_2$ we also have
\begin{equation}\label{coprim}
(dd_1 \nu'_1,\delta \delta_1 k'_1) = (d\nu_2,  \delta \delta_2 k'_2) =1.
\end{equation}
This will be used several times below without further notice.

Given $D, D_{1}, \Delta, \Delta_1, \Delta_2 \geq 1$ let $W(Q,D,D_{1},\Delta, \Delta_1, \Delta_2)$ be the contribution to the right--hand side of \eqref{defW1} of the integers $q_1, q_2, n_1, n_2, m$ satisfying
\begin{equation}\label{extracond}
d\leq D,d_1  \leq D_{1}, \delta\leq \Delta, \, \delta_1 \leq \Delta_1, \, \delta_2 \leq \Delta_2.
\end{equation}
The parameters $D, D_1, \Delta, \Delta_1, \Delta_2$ will be chosen small. In fact to make things simple we will choose them to be equal, and thus set, 
\begin{equation}\label{W(QD)}
W(Q, D) = W(Q, D, D, D, D, D).
\end{equation}
The purpose of the following lemma is to prove that we can approximate $W (Q)$ by $W(Q,  D,D_{1, } \Delta, \Delta_1, \Delta_2)$.
\begin{lemma}\label{approxW}
  Let $k > 0$. Let $\boldsymbol \beta = (\beta_{n})$ be a sequence such that $|\beta_{n}| \leq \tau_k(n)$ for all $n \geq 1$. Then there exists $\kappa = \kappa (k)$ such that,   for every $\varepsilon >0$, for every $D, D_{1},\Delta, \Delta_1, \Delta_2 \geq 1$, one has 
\begin{multline*}
W(Q)- W(Q,D,D_{1 },\Delta, \Delta_1, \Delta_2) \\
=O_{\varepsilon }\Bigl(\mathcal L^\kappa MN^2 (D^{-1} + D_{1}^{-\frac{1}{2}}+ \Delta^{-1} + \Delta_1^{-\frac{1}{2}} + \Delta_2^{-\frac{1}{2}})  \Bigr),
\end{multline*}
uniformly for $M$, $N$ and $Q\geq 2$ satisfying  
$$
M \geq QX^\varepsilon,
\ M > N >\Delta^{10} \text{ and } 
 1 \leq \vert a \vert \leq X/3.
 $$
 In particular we have 
 \begin{equation}\label{W-W(QD)}
 W(Q)-W(Q,D)= O_{\varepsilon }\bigl(\mathcal L^\kappa D^{-\frac{1}{2}} MN^2\bigr)
 \end{equation}
 uniformly for $D$, $M$, $N$ and $Q\geq 2$ satisfying  
$$
M \geq QX^\varepsilon,
\ M >  N >D^{10} \text{ and } 
 1 \leq \vert a \vert \leq X/3.
 $$
\end{lemma}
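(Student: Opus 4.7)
The strategy is to bound $W(Q) - W(Q, D, D_1, \Delta, \Delta_1, \Delta_2)$ by the triangle inequality into five sub-sums, according to which of the inequalities in \eqref{extracond} fails, and bound each separately to produce the five error terms in the lemma.

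After passing to absolute values (using $|c_q| \leq 1$ and $|\beta_n| \leq \tau_k(n)$), the uniform inner estimate in every case is
$$
\sum_{q_1, q_2 \sim Q} \sum_{\substack{m \sim M \\ mn_1 \equiv a (q_1), \, mn_2 \equiv a (q_2)}} \psi(m/M) \;\leq\; \sum_{m \sim M} \psi(m/M)\, \tau(mn_1 - a) \tau(mn_2 - a) \;\ll\; M \mathcal{L}^{\kappa},
$$
valid for any $n_1, n_2 \sim N$: by \eqref{mnn=a} the integers $mn_i - a$ are nonzero, hence each admissible $q_i$ is a positive divisor of $mn_i - a$, and the final upper bound follows from Cauchy--Schwarz together with the standard second-moment estimate $\sum_{m \sim M} \tau(mn - a)^2 \ll M \mathcal{L}^{\kappa}$, which is uniform for $n \sim N < M$ and $|a| \leq X$ via Lemma \ref{le:shiu} applied on a progression modulo $n/(a,n)$.

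For the cases $d > D$ and $\delta > \Delta$, the remaining outer sum over $(n_1, n_2)$ (respectively $(q_1, q_2)$) with a common factor exceeding the threshold is handled by the Shiu-type bound of Lemma \ref{dkinarith}: for instance,
$$
\sum_{d > D} \Big( \sum_{\substack{n \sim N \\ d \mid n}} \tau_k(n) \Big)^{\!2} \;\ll\; \mathcal{L}^{\kappa} \sum_{d > D} \tau_k(d)^2 \frac{N^2}{d^2} \;\ll\; \mathcal{L}^{\kappa} \frac{N^2}{D},
$$
and multiplying by the inner $M \mathcal{L}^{\kappa}$ yields the claimed $\mathcal{L}^{\kappa} M N^2 / D$. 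For the three ``smooth-part'' cases $d_1 > D_1$, $\delta_1 > \Delta_1$, $\delta_2 > \Delta_2$, I apply Rankin's trick exploiting the divisibility constraints ($d_1 \mid d^\infty$, etc.): writing
$$
\sum_{\substack{d_1 > D_1 \\ d_1 \mid d^\infty}} \frac{\tau_k(d_1)}{d_1} \;\leq\; D_1^{-1/2} \sum_{d_1 \mid d^\infty} \frac{\tau_k(d_1)}{d_1^{1/2}} \;=\; D_1^{-1/2} \prod_{p \mid d}\Big(1 - p^{-1/2}\Big)^{-k},
$$
and the Euler product is absorbed into the overall $\mathcal{L}^{\kappa}$ factor, yielding the weaker $D_1^{-1/2}$ savings (and analogously $\Delta_1^{-1/2}, \Delta_2^{-1/2}$).

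The main technical obstacle will be tracking the accumulated divisor-function factors $\tau_k(d), \tau_k(d_1), \tau_k(\delta), \tau_k(\delta_1), \tau_k(\delta_2)$ uniformly through Shiu's bound and the Rankin estimate, so they combine into a single $\mathcal{L}^{\kappa}$ prefactor; the hypotheses $M > Q X^{\varepsilon}$, $M > N > \Delta^{10}$ and $|a| \leq X/3$ ensure all the relevant divisor sums remain in the range of applicability of Lemma \ref{dkinarith}. The bound \eqref{W-W(QD)} then follows from the combined estimate by specializing $D = D_1 = \Delta = \Delta_1 = \Delta_2$, since the dominant error is the $\mathcal{L}^{\kappa} M N^2 D^{-1/2}$ contribution from the three smooth-part cases (the $D^{-1}$ and $\Delta^{-1}$ terms being smaller).
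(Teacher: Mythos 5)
Your decomposition into five cases according to which inequality in \eqref{extracond} fails matches the paper, and the bounds you give for the two cases tied to the $n$-variables ($d>D$ and $d_1>D_1$) are essentially the paper's argument: convert both congruences $mn_i\equiv a\pmod{q_i}$ into divisor conditions $q_i\mid mn_i-a$, bound the $(q_1,q_2,m)$-sum by $\sum_m\psi(m/M)\,\tau_2(|mn_1-a|)\tau_2(|mn_2-a|)\ll M\mathcal L^{\kappa}$, and then run a Rankin/Shiu argument in the outer $(n_1,n_2)$-sum.

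However, this ``uniform inner estimate'' cannot serve in the three cases $\delta>\Delta$, $\delta_1>\Delta_1$, $\delta_2>\Delta_2$: there the constraint to be exploited sits on the $q$-variables, so summing $q_1,q_2$ away inside the inner estimate erases it, leaving only the trivial bound $\ll MN^2\mathcal L^{\kappa}$ for $W(Q)$ itself. You acknowledge the need to switch roles (``outer sum over $(q_1,q_2)$''), but then the inner sum is over $(n_1,n_2,m)$ and one must count $m$ in the intersection of two progressions mod $q_1$ and mod $q_2$, i.e.\ mod $[q_1,q_2]=q_1q_2/\delta$. Since $Q$ may exceed $\sqrt M$, this modulus can be much larger than $M$, and the ``$+1$'' in the progression count is then lethal: the naive estimate produces a term of order $N^2Q^2/\Delta^2$ which is not $\ll MN^2/\Delta$ in the regime of interest. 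The paper sidesteps this with a hybrid: it retains exactly one congruence, say $m\equiv a\overline{n_2}\pmod{q_2}$, and converts only the other, $q_1\mid mn_1-a$, to a divisor factor $\tau_2(|mn_1-a|)$; then Lemma \ref{dkinarith} gives $\sum_{m\equiv a\overline{n_2}\,(q_2)}\tau_2(|mn_1-a|)\ll\mathcal L^{\kappa}M/q_2$, and this is exactly where the hypothesis $M\geq QX^{\varepsilon}$ is used. A further split $\delta\lessgtr N^{1/4}$ together with the hypothesis $N>\Delta^{10}$ controls the residual ``$+1$'' in the count of pairs $n_1\equiv n_2\pmod\delta$. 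Your sketch invokes the hypotheses only as a vague guarantee that Lemma \ref{dkinarith} applies, but it neither identifies the hybrid step that makes the $q$-cases tractable nor the place where the lower bound $N>\Delta^{10}$ is genuinely needed, so as written the argument does not close cases (iii)--(v).
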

\begin{proof} The proof is a consequence  of bounds for the divisor function $\tau_k$ in arithmetic progressions  (see Lemma \ref{dkinarith}). We bound
$\vert W(Q)- W(Q,D, D_{1},\Delta, \Delta_1, \Delta_2)\vert$ by noticing that each of the $q_1, q_2, n_1, n_2, m$ that contributes to $|W(Q) - W(Q, D, D_1, \Delta, \Delta_1, \Delta_2)|$ falls into one of the five cases below. We then estimate the contribution of each case. 
\begin{enumerate}
\item {\it the contribution of $q_1, q_2, n_1, n_2, m$ with $d> D$.} The contribution of such $q_1, q_2, n_1, n_2, m$  is less than (recall \eqref{mnn=a})
\begin{align*} 
 & \sum_{d >D}  \sum_{\nu_1 \sim N/d} \sum_{\nu_2\sim N/d} \tau_k (d\nu_1) \tau_k (d \nu_2) \sum_{M/2 <m <5 M/2  } \tau_2 (\vert dm\nu_1-a\vert )\tau_2 (\vert dm\nu_2-a\vert )\\
& \ll  \mathcal L^{\kappa_4}  M N^2 \sum_{d > D}  \tau_k (d)^2d^{-2}
 \ll \mathcal L^{\kappa_5}\,D^{-1}MN^{2} ,
 \end{align*}
\item {\it the contribution of $q_1, q_2, n_1, n_2, m$ with $d \leq D$ and $d_{1}> D_{1}$}. The contribution of such $q_1, q_2, n_1, n_2, m$ is bounded by
\begin{align*} 
 & \sum_{d \leq D} \sum_{\substack{d_{1}>D_{1} \\ d_1 | d^{\infty}}} \sum_{\nu'_1 \sim N/(dd_1)} \sum_{\nu_2\sim N/d} \tau_k (dd_{1}\nu'_1) \tau_k (d \nu_2)
  \\ & \qquad \qquad \qquad  \times \sum_{M/2 <m <5 M/2 } \tau_2 (\vert mdd_{1}\nu'_1-a\vert )\tau_2 (\vert md\nu_2-a\vert )
 \\
& \ll  \mathcal L^{\kappa_6}  M N^2 \sum_{d \leq D}  \tau_k (d)^2 d^{-2} \sum_{d_{1 }\vert d^\infty} \Bigl( \frac{d_{1}}{D_{1}}\Bigr)^\frac{1}{2}  \frac{\tau_k(d_1)}{d_{1}} \ll \mathcal L^{\kappa_7}\,D_{1}^{-\frac{1}{2}}MN^{2} ,
 \end{align*}
\item {\it the contribution of $q_1, q_2, n_1, n_2, m$ with $\delta > \Delta$.} The conditions of summation over $m$ in \eqref{defW1} imply that we necessarily have
\begin{equation}\label{n1congn2}  
n_1 \equiv n_2 \bmod \delta,
\end{equation} 
hence, thanks to the assumption $M> QX^\varepsilon$, and using the trivial bound $\tau_k(n_2) \ll N^{1/10}$, the contribution of integers $q_1, q_2, n_1, n_2, m$ with $\delta > \Delta$ is in absolute value less than 
\begin{align*}
& \sum_{\delta > \Delta}  \sum_{k_2 \sim Q/\delta}\  
\underset{\substack{n_1 \sim N,\ n_2 \sim N \\  n_1 \equiv n_2 \bmod \delta}}{\sum \ \sum}  \tau_k (n_1) \tau_k (n_2)
\sum_{\substack{M/2 <m <5 M/2  \\ m\equiv a \overline{n_2} \bmod \delta  k_2}} \tau_2 (\vert mn_1-a\vert )  \\
& \ll  \mathcal L^{\kappa_8} \Big ( \sum_{\Delta < \delta < N^\frac{1}{4}}    \sum_{k_2 \sim Q/\delta}  \sum_{n_1 \sim N} \tau_k (n_1)   \frac{N}{\delta}   \cdot  \frac{M}{\delta k_2} \\ & \qquad \qquad \qquad + N^\frac{1}{10}\sum_{\substack{N M \geq \delta >N^{1/4}}}   \sum_{k_2 \sim Q/\delta}  \sum_{n_1 \sim N} \tau_k (n_1)  \Bigl( \frac{N}{\delta}  +1 \Bigr)   \cdot  \frac{M}{\delta k_2} 
\Bigr ) \\
& \ll \mathcal L^{\kappa_9} \bigl( MN^2 \Delta^{-1} + MN^\frac{37}{20} \bigr) \ll  \mathcal L^{\kappa_9} MN^2 \Delta^{-1},
\end{align*}
\item {\it the contribution of $q_1, q_2, n_1, n_2, m$ with $\delta \leq \Delta$ and $\delta_1 > \Delta_1$.}  Thanks to the assumptions
$M>QX^\varepsilon$ and  
$N > \Delta^{10}$, the contribution of these integers is less than
\begin{align*}
&\sum_{\delta \leq \Delta} \sum_{\substack{\delta_1 \vert \delta^{\infty} \\ \delta_1 > \Delta_1}} \ \sum_{k'_1 \sim Q/(\delta \delta_1)}\  \underset{\substack{n_1 \sim N,\ n_2 \sim N \\  n_1 \equiv n_2 \bmod \delta}}{\sum \ \sum}  \tau_k (n_1) \tau_k (n_2)
\sum_{\substack{m\sim M \\ m\equiv a \overline{n_1} \bmod \delta  \delta_1 k'_1}} \tau_2 (\vert mn_2-a\vert ) \\ & \ll \mathcal L^{\kappa_{10}}  \sum_{\delta \leq \Delta} \sum_{\substack{\delta_1 \vert \delta^{\infty} \\ \delta_1 > \Delta_1}} \ \sum_{k'_1 \sim Q/(\delta \delta_1)}  N \cdot \frac{N}{\delta} \cdot \frac{M} {\delta \delta_1 k'_1} \ll \mathcal L^{\kappa_{11}}  MN^2 \sum_{\delta \leq \Delta} \sum_{\substack{\delta_1 \vert \delta^{\infty} \\ \delta_1 > \Delta_1}}\frac{1}{\delta^2 \delta_1}  \\
&\ll \mathcal L^{\kappa_{12}} MN^2 \sum_{\delta \leq \Delta} \frac{1}{\delta^2}  \sum_{\delta_1\vert \delta^\infty} \frac{1}{\delta_1} \Big ( \frac{\delta_1}{\Delta_1} \Big )^\frac{1}{2} \ll \mathcal L^{\kappa_{13}}  MN^2 \Delta_1^{-\frac{1}{2}}.
\end{align*}
\item {\it   the  contribution of $q_1, q_2, n_1, n_2, m$ with $\delta \leq \Delta$ and $\delta_2 > \Delta_2$.} Proceeding in the same way as above we find that the contribution of such integers is
$$
\ll \mathcal L^{\kappa_{14}}  MN^2 \Delta_2^{-\frac{1}{2}}.
$$
\end{enumerate}
Gathering the five upper bounds proved above, we complete the proof of Lemma \ref{approxW}.
 \end{proof}

 \subsection{Preparing the  congruences.} We keep  the notations introduced in \eqref{conv} and the co-primality conditions from \eqref{coprim}. Our immediate goal is to transform the system of  congruences  appearing in \eqref{defW1}, namely,
 \begin{equation}\label{system}
 \begin{cases}
 m \equiv a \overline{n_1} \bmod q_1\\
 m\equiv a \overline{n_2} \bmod q_2
 \end{cases}
 \end{equation}
 into a single congruence.   We assume that \eqref{n1congn2} is satisfied
 hence the system \eqref{system} is solvable.  By \eqref{coprim}  we deduce that \eqref{system} is equivalent to the system of four congruences
 \begin{equation}\label{system2}
 \begin{cases}
 m\equiv a \overline {n_1} \bmod \delta \delta_1, &  m\equiv a \overline {n_2} \bmod \delta \delta_2,\\
 m\equiv  a \overline {n_1} \bmod k'_1, &  m\equiv a \overline {n_2} \bmod k'_2.
 \end{cases}
 \end{equation}
 The first two equations are equivalent to $m\equiv a \lambda \bmod \delta \delta_1 \delta_2$, where $\lambda (n_1, n_2)$ is some congruence class modulo $\delta \delta_1 \delta_2$, only depending on the congruence classes of $n_1 \bmod \delta \delta_1$ and $n_2 \bmod \delta \delta_2$. Finally, we see that \eqref{system2} is equivalent to the unique equation 
 \begin{equation}\label{m=m0}
 m\equiv m_0 \bmod \ell,
 \end{equation}
 with 
 \begin{equation}\label{defell}
 \ell  = \gamma k'_1 k'_2,
 \end{equation}
 \begin{equation}\label{defgamma}
 \gamma = \delta \delta_1 \delta_2,
 \end{equation}
 and  $$
 m_0 = a \lambda \,k'_1k'_2\, \overline{ k'_1} \, \overline{k'_2} 
 +a\, \gamma\,\overline{\gamma} \,\overline{n_1}\,k'_2 \, \overline{k'_2}  +a\, \gamma\, \overline{\gamma}\, \overline{n_2} \, k'_1 \overline{k'_1}
 \bmod \ell,
 $$
 where the $\overline{x}$--symbol  respectively means the inverse of $x$  modulo $\gamma$, $k'_{1}$ and $k'_{2}$.
 In the sequel of the proof, we will have in mind that $\delta, \delta_1, \delta_2, \gamma, d, d_1$ are tiny variables, $\nu'_1, \nu_2$ are small variables and, finally, $k'_1$ and $k'_2$ are large variables which will produce cancellations when summing over them.

 \subsection{Expanding in Fourier series.}  The last sum over $m$ in \eqref{defW1} is precisely 
 $$
 \sum_{m\equiv m_0 \bmod \ell} \psi \Bigl( \frac{m}{M}\Bigr).
 $$
 Fix 
 \begin{equation}\label{defH}
H= 4 \mathcal L^4 Q^2 M^{-1}  \ \ (>\ell \mathcal L^4 M^{-1} ).
\end{equation}
By \eqref{ineqpsi} of Lemma \ref{existenceofpsi} the above  sum over $m$  is equal to
\begin{equation}\label{Fou1}
\hat{\psi}(0)  \frac {M}{\ell}
+ \frac{M}{\ell} \sum_{1 \leq \vert h \vert \leq H}e \bigl( \frac{ hm_0}{\ell} \bigr)\hat \psi \Bigl( \frac{hM}{\ell}\Bigr)+O (M^{-1}).
\end{equation}
 Inserting this equality in  the definition of $W(Q,D)  $ (see \eqref{defW1},  \eqref{extracond} and \eqref{W(QD)}) we split $W(Q,D)$ into three parts  corresponding to each of the components of  the sum \eqref{Fou1}
\begin{equation}\label{splitW}
W(Q,D)= W^{\rm MT} + W^{\rm Err1}+ W^{\rm Err2}, 
\end{equation}
where $W^{\rm MT} $ is the main term, $ W^{\rm Err1} $ is the delicate error term corresponding to the sum over $1 \leq |h| \leq H$ and $W^{\rm Err2}$ is the trivial error term corresponding to the total contribution of the error term $O(M^{-1})$ appearing in \eqref{Fou1}. In particular, we have
\begin{equation}\label{Err2}
W^{\rm Err2} \ll   \mathcal L ^{\kappa_{15}} M^{-1} N^2 Q^2.
\end{equation}
\subsection{Dealing with the main term $W^{\rm MT}$ }
By definition we have the equality
\begin{multline*}
W^{\rm MT}
 =
\hat \psi (0)  M\underset{d, d_1, \delta, \delta_1, \delta_2 \leq D}{ \sum \cdots \sum}  \ \ \   \sum_{ k'_1 \sim Q/(\delta \delta_1)\ }
\sum_{ k'_2 \sim Q /( \delta \delta_2)}
\frac{c_{\delta \delta_1k'_1} \overline{c_{\delta \delta_2 k'_2}}}{\delta \delta_1 \delta_2 k'_1k'_2}\\
\sum_{\nu'_1 \sim Q/(dd_1)} \sum_{\nu_2 \sim Q/d}\beta_{dd_1\nu'_1} \beta_{d \nu_2},
\end{multline*}
where the variables of summation  satisfy  the divisibility  and co-primality conditions appearing in \eqref{conv} and \eqref{coprim} and the congruence condition  (see \eqref{n1congn2})
\begin{equation} \label{eq:congonce}
dd_1\nu'_1 \equiv d\nu_2 \bmod \delta.
\end{equation}
We now drop the conditions $d, d_1, \delta, \delta_1, \delta_2 \leq D$, this generates an error  of the shape $O( \mathcal L^{\kappa_{16}} 
D^{-\frac 12}MN^2)$ (the computations are similar to those made in the proof of Lemma \ref{approxW}).
We now sum over all the reduced congruences $\alpha \bmod \delta$. This gives the equality
\begin{multline*}
W^{\rm MT}
 =
\hat \psi (0)  M\underset{d, d_1, \delta, \delta_1, \delta_2 }{ \sum \cdots \sum}  \ \ \   \sum_{ k'_1 \sim Q/(\delta \delta_1)\ }
\sum_{ k'_2 \sim Q /( \delta \delta_2)}
\frac{c_{\delta \delta_1k'_1} \overline{c_{\delta \delta_2 k'_2}}}{\delta \delta_1 \delta_2 k'_1k'_2}\\
 \sum_{\substack{\alpha \mod \delta\\ (\alpha, \delta)=1}}
\Bigl(\sum_{\substack{\nu'_1 \sim N/(dd_1)\\ dd_1\nu'_1\equiv \alpha \bmod \delta }}  \beta_{dd_1 \nu_1} \Bigr) 
\overline{\Bigl(\sum_{\substack{\nu_2 \sim N/d_2\\ d\nu_2\equiv \alpha \bmod \delta }}  \beta_{d\nu_2} \Bigr)} +
O( \mathcal L^{\kappa_{16}} 
D^{-\frac 12}MN^2).
\end{multline*}
  Returning to the original variables  $k_1$, $k_2$, $n_{1}$ and $n_{2}$
we obtain  the equality
\begin{multline}\label{MTW}
W^{\rm MT}
 =
\hat \psi (0)  M  \sum_{\delta }   \underset{\substack{k_{1} \sim Q/\delta \ , \ k_2 \sim Q/\delta \\ (k_1, k_2) = 1}}{\sum \sum}
\frac{c_{\delta k_{1}} \overline{c_{\delta k_{2}}}}{\delta  k_1k_2}\\ \sum_{\substack{\alpha \mod \delta\\ (\alpha, \delta)=1}}
\Bigl(\sum_{\substack{n_{1}\sim N, (n_{1}, \delta k_1) =1\\ n_1\equiv \alpha \bmod \delta}}  \beta_{n_1} \Bigr) 
\overline{\Bigl(\sum_{\substack{n_{2} \sim N, (n_{2}, \delta k_2 )=1\\ n_{2}\equiv \alpha \bmod \delta}}  \beta_{n_{2}} \Bigr)}+O( \mathcal L^{\kappa_{16}} 
D^{-\frac 12}MN^2).
\end{multline}
\subsection{Transformation of the exponential sum.} \label{se:tran} We now turn our attention to the error term $W^{\rm Err1}$.  By \eqref{defW1}, \eqref{Fou1} and \eqref{splitW}, the error term $W^{\rm Err1}$ is defined by 
\begin{equation}\label{defWErr1}
W^{\rm Err1} =  M \sum_{q_1 \sim Q} \sum_{q_2 \sim Q} \frac{c_{q_1} \overline{c_{q_2}}}{\ell} \sum_{n_1 \sim N} \sum_{n_2 \sim N} \beta_{n_1} \overline{\beta_{n_2}}   \sum_{1 \leq \vert h \vert \leq H}e \bigl( \frac{ hm_0}{\ell} \bigr)\hat \psi \Bigl( \frac{hM}{\ell}\Bigr),
\end{equation}
where the variables above are still subject to the conventions set out in \eqref{conv}, \eqref{coprim},  \eqref{defell},  \eqref{defgamma},  \eqref{defH}, and the inequalities
\begin{equation}\label{<D}
d,d_1, \delta, \delta_1, \delta_2 \leq D.
\end{equation}
and the congruence condition \eqref{eq:congonce}.
We now exploit that, since $h$ is small, the Fourier transform $\hat \psi (hM/\ell)$ fluctuates slowly in the following sense:
There exists an absolute $C_1 > 0$ such that
$$
\frac{ \partial^{a_1+a_2+a_3}}{\partial h^{a_1}\partial {k'}_1^{a_2} \partial {k'}_2^{a_3}}\Bigl\{\,\frac{Q}{k'_1}\, \frac{Q}{k'_2}\hat \psi \Bigl( \frac{hM}{\gamma k'_1 k'_2}\Bigr)\, \Bigr\}
\ll (1 + \vert h\vert)^{-a_1}{k'}_1^{-a_2} {k'}_2^{-a_3} \cdot D^{C_1}
$$
for integers $0 \leq a_1, a_2, a_3 \leq 1$ and real numbers $k_1', k_2' \in [Q / D^{100}, 2Q]$ and real numbers $h \in [-H, H]$.
This bound allows to  integrate by parts to suppress  the coefficient $\hat \psi ( h M / \ell )$ in \eqref{defWErr1}. This shows that,
\begin{multline*}
W^{\text{Err1}} \ll  D^{C_2} \mathcal{L}^{100} M Q^{-2} \\  \times \sup_{\substack{(d, d_1, \delta, \delta_1, \delta_2)}}   
\Bigl\vert\sum_{1 \leq \vert h \vert \leq H}   \sum_{\nu'_1\leq 2N} \sum_{\nu_2\leq 2N}
 \sum_{k'_1\leq 2Q} \sum_{k'_2\leq 2Q} \xi_1 (h) \xi_2 (\nu'_1) \xi_3 (\nu_2) \xi_4 (k'_1) \xi_5 (k'_2)
 e(\cdots)\Bigr\vert, 
 \end{multline*}
 for some $C_2 > 0$, and
 where the supremum is taken over all $(d, d_1, \delta, \delta_1, \delta_2)$ obeying the co-primality conditions implied by \eqref{conv},\eqref{coprim} and the size conditions \eqref{<D}. Moreover $\xi_1, \xi_2, \xi_3, \xi_4, \xi_5$ are sequences of complex numbers of modulus one. Finally, 
$$
e(\cdots)=
e\Bigl(a\lambda h \frac{\overline{k'_1}\, \overline{ k'_2}}{\gamma}
+ah \frac{ \overline \gamma \,\overline {d}\, \overline{d_1}\, \overline {\nu'_1}\,  \overline{k'_2}}{k'_1} 
+  ah \frac{ \overline \gamma\, \overline {d }\, \overline{\nu_2} \, \overline{k'_1}}{k'_2} 
\Bigr). 
$$
We note that fixing the congruence class of $\nu'_1, \nu_2, k'_1, k'_2$ modulo $\gamma d d_1$ obviously also fixes it modulo $\gamma$, and as a result the value of the fraction
$$a \lambda h \frac{\overline{k'_1}\, \overline{ k'_2}}{\gamma}$$ is constant modulo $1$.

It follows that we can further bound $W^{\text{Err1}}$ by
\begin{multline} \label{hammer}
W^{\text{Err1}}  
\ll D^{C_3} \mathcal{L}^{100} M Q^{-2} \\
\times \sup_{\substack{(\alpha_1, \alpha_2, \alpha_3, \alpha_4) \\ (d, d_1, \delta, \delta_1, \delta_2)}} \Big | \underset{\substack{1 \leq |h| \leq H \ , \ \nu'_1 \leq 2 N \\ \nu_2 \leq 2N \ , \ k'_1 \leq 2Q \\ k'_2 \leq 2 Q}}{\sum \ldots \sum} \xi_{1}(h) \xi_{2}(\nu'_1) \xi_{3}(\nu_2) \xi_4(k'_1) \xi_5(k'_2) e_{1}(\ldots) \Big |
\end{multline}
for some $C_3 > 0$ and where the supremum is now taken over all $(\alpha_1, \alpha_2, \alpha_3, \alpha_4)$ belonging to the interval $[0, \gamma d d_1]$ and all $(d, d_1, \delta, \delta_1, \delta_2)$ satisfying the congruence conditions implied by \eqref{conv}, \eqref{coprim} and the size conditions \eqref{<D}. In the summation we impose the additional condition that each $\nu'_{1}, \nu_{2}, k'_1, k'_2$ is in a congruence class $\alpha_1 \pmod{\gamma d d_1}, \ldots, \alpha_{4} \pmod{\gamma d d_1}$, respectively, and as usual the variables $\nu'_1, \nu_2, k'_1, k'_2$ obey the co-primality conditions \eqref{conv} and \eqref{coprim}. Finally, $e_1(\cdots)$ is defined by
\begin{equation}\label{defe1}
e_1 (\cdots) = e\Bigl( 
ah \frac{ \overline \gamma \,\overline {d}\, \overline{d_1}\, \overline {\nu'_1}\,  \overline{k'_2}}{k'_1} 
+  ah \frac{ \overline \gamma\, \overline {d }\, \overline{\nu_2} \, \overline{k'_1}}{k'_2} 
\Bigr).
\end{equation}
 In order  to transform $e_1(\cdots)$  we apply Bezout's relation twice to write
\begin{align*}
 \frac{  \overline \gamma \,\overline {d}\, \overline{d_1}\, \overline{\nu'_1}\,  \overline{k'_2}}{k'_1} 
  &= \frac{1}{\gamma dd_1 \nu'_1 k'_1 k'_2}
  - \frac{\overline{k'_1}}{ \gamma
 dd_1 \nu'_1  k'_2}
 \bmod 1\\
 & = \frac{1}{\gamma dd_1 \nu'_1 k'_1 k'_2 }
 - \frac{\overline{ \gamma} \,  \overline{d}\, \overline{d_1} \,   \overline{ k'_1}}{\nu'_1 k'_2} 
 - \frac{\overline{\nu'_1} \,\overline{k'_1} \, \overline{k'_2}}{ \gamma d d_1 } \bmod 1.
 \end{align*}
 Inserting this expression into \eqref{defe1}  then in \eqref{hammer} we see that 
 \begin{multline}\label{hammer1}
   W^{\rm Err1} \ll D^{C_3} \mathcal{L}^{100} MQ^{-2} \sup_{\substack{(\alpha_1, \ldots, \alpha_4) \\ (d, d_1, \delta, \delta_1, \delta_2)}} \, \Bigl\vert
 \underset{\substack{1 \leq |h| \leq H \\ \nu'_1, \nu_2 \leq 2N \\ k'_1, k'_2 \leq 2Q}}{\sum \ldots \sum}
 \xi_1 (h) \xi_2 (\nu'_1) \xi_3 (\nu_2) \xi_4 (k'_1) \xi_5 (k'_2)\\ 
 \times e\Bigl( ah \bigl(   \frac{1}{\gamma dd_1 \nu'_1 k'_1 k'_2 }
-   \frac{\overline{\nu'_1} \,\overline{k'_1} \, \overline{k'_2}}{\gamma d d_1 } 
+ \frac{ \overline{ \gamma} \, \overline d \, \overline {d_1}\,  (d_1 \nu'_1 -\nu_2) \overline{(\nu_2k'_1)} }{\nu'_1 k'_2}  \bigr)\, 
\Bigr)  \Bigr\vert. 
 \end{multline}
 The first term inside $e(\ )$ is controlled by summation by parts  over the five variables $h$, $\nu'_1$, $\nu_2$, $k'_1$, $k'_2$ since
 $1\leq \vert a \vert \leq X/3$. The second term in $e(\ )$ is fixed $\bmod\, 1$ since  the congruence classes of  $h$, $\nu'_1$, $k'_1$ and $k'_2$ are fixed modulo $\gamma d d_1$. 
From \eqref{hammer1} we get the inequality
\begin{multline}\label{hammer3}
 W^{\rm Err1} \ll D^{C_4} \mathcal{L}^{100} MQ^{-2} \sup_{\substack{(\alpha_1, \ldots, \alpha_5) \\ (d, d_1, \delta, \delta_1, \delta_2)}} \sum_{\nu'_1\leq 2N} \sum_{\nu_2\leq 2N}    \Bigl\vert  \sum_{1 \leq h \leq H} 
 \sum_{k'_1\leq 2Q} \sum_{k'_2\leq 2Q}  \eta_0 (h)  \eta_1(k'_1) \eta_2 (k'_2)\\ 
 e\Bigl( ah 
 \frac{ \overline{ \gamma} \, \overline d \, \overline {d_1}\,  (d_1 \nu'_1 -\nu_2) \overline{(\nu_2k'_1)} }{\nu'_1 k'_2}  \bigr)\, 
\Bigr) \Bigr\vert,
 \end{multline}
 where $\eta_0 (h)$,  $\eta_1 (k'_1)$ and $\eta_2 (k'_2)$ are  some unspecified coefficients  less than or equal to $1$ in modulus. Notice that this allows us to without loss of generality replace the condition $1 \leq |h| \leq H$ by $1 \leq h \leq H$. 
 To bound \eqref{hammer3}, we will sum trivially over $\nu'_1$ and $\nu_2$ and use Lemma \ref{trilinear} on the remaining variables.

 We localize each of the variables $\nu'_1, \nu_2, h, k'_1, k'_2$ dyadically around powers of two that we denote respectively by $N_1, N_2, H_{d}, K_1, K_2$.
 On each such dyadic partition we apply Lemma \ref{trilinear} with the following choice of variables, 
 $$
 \vartheta \rightarrow a(d_1\nu'_1 -\nu_2), \ a\rightarrow \underline{h}, \ m\rightarrow \gamma dd_1\nu_2 \underline{k'_1} ,\ n\rightarrow \nu'_1 \underline{k_2} ,
 $$
 (where we underlined the non-fixed variables and where the left-side of $\rightarrow$ corresponds to notations of Lemma \ref{trilinear} while the right-side of $\rightarrow$ corresponds to our current notation)
 and parameters
 $$
 \vert \vartheta \vert \ll \vert a \vert D N_1 + |a| N_2, \ A\rightarrow  H, \  M \rightarrow \gamma d d_1 \nu_2 K_1 , \ N \rightarrow \nu_1' K_2. 
 $$
 Note that $\gamma d d_1 \nu_2 K_1 \leq D^{5} N_2 K_1$ and that $\nu_1' K_2 \leq N_1 K_2$. 
 The values of the corresponding  $\ell_2$--norms
 are respectively 
 $$
  \Vert \boldsymbol \alpha\Vert_{2, M} \ll K_1^{1/2} \ , \ \Vert \boldsymbol \beta\Vert_{2, N} \ll K_2^{1/2} \text{ and }\Vert \boldsymbol \nu\Vert_{2, A}\ll H_{d}^\frac{1}{2}.
  $$
  Returning to \eqref{hammer3}   we deduce that
  \begin{align*}
 &  W^{\rm Err1} \ll \mathcal{L}^{200} \sup_{(N_1, N_2, K_1, K_2, H_{d})} D^{C_5}  M Q^{-2} N_1 N_2 (K_1 K_2 H_{d})^\frac{1}{2} \Bigl( 1 + \frac{ \vert a \vert N_1 \cdot H_{d} + |a| N_2}{N_1 N_2 K_1 K_2} \Bigr)^\frac{1}{2} \cdot\\ &
\Big( ( H_{d} N_1 N_2 K_1 K_2)^{\frac{7}{20} +\varepsilon} (N_2 K_1 + N_1 K_2)^\frac{1}{4} + ( H_{d} N_1 N_2 K_1 K_2)^{\frac{3}{8} +\varepsilon} (H_{d} (N_2 K_1 + N_1 K_2)  )^\frac{1}{8}
   \Big)
  \end{align*}
  for  some constant $C_5 > 0$ and where the supremum runs over all powers of two $N_1, N_2, K_1, K_2, H_{d}$ obeying the conditions $1 \leq N_1, N_2 \leq 2N$, $1 \leq K_1, K_2 \leq 2Q$ and $1 \leq H_{d} \leq H$. 
 Summing over all the dyadic partitions that were available to us we get, 
 \begin{align*}
 W^{\rm Err1} \ll \mathcal{L}^{200} & D^{C_5} M Q^{-2} N^{2} \cdot Q H^{1/2} \cdot \Big ( 1 + \frac{|a| }{M N} \Big )^{\frac{1}{2}} \\ & \times \Big ( (H N^2 Q^2)^{\frac{7}{20} + \varepsilon} \cdot ( N Q)^{\frac{1}{4}} + (H N^2 Q^2)^{\frac{3}{8} + \varepsilon} \cdot (H N Q)^{\frac 18} \Big ).
 \end{align*}
Recalling \eqref{defH},  and the inequality $1\leq \vert a \vert \leq X/3$  this bound simplifies 
  into 
   \begin{align}
   W^{\rm Err1} &\ll D^{C_5} X^\varepsilon M^\frac{1}{2} N^2 \bigl( M^{-\frac{7}{20}} N^\frac{19}{20} Q^\frac{33}{20} +  M^{-\frac{1}{2}}
   N^\frac{7}{8} Q^\frac{15}{8}\bigr)\nonumber \\
   &\ll  D^{C_5} X^\varepsilon \bigl( M^\frac{3}{20} N^\frac{59}{20}  Q^\frac{33}{20}+ N^\frac{23}{8} Q^\frac{15}{8}\bigr).\label{WErr1<<}
   \end{align}
 

\subsection{The main term}  Gathering \eqref{finalU},  \eqref{V=U} and   \eqref{MTW}  we see that the main term of $W (Q)-2 \Re V(Q)+ U(Q)$ is equal to 
\begin{equation}\label{defT(Q)}
  T (Q):=M \hat \psi (0) \sum_{\delta} \frac{1}{\delta}\underset{(k_1,k_2) =1} {\sum \ \sum} \frac{c_{\delta k_1} \overline{c_{\delta k_2}}}{k_1\, k_2} 
\sum_{\substack{\delta'\bmod \delta\\ (\delta', \delta) =1}}
E^{\star}(\boldsymbol \beta, N, \delta, \delta' ;k_1)  \overline{E^{\star}( \boldsymbol \beta, N, \delta,  \delta'; k_2)}
\end{equation}
where the function $E^{\star}$ is defined in \eqref{defEmodif}.  Since $\vert c_q\vert\leq 1$, we deduce that
$$
\vert T(Q) \vert \ll M   \sum_{\delta} \frac{1}{\delta}\underset{ k_1, \ k_2 \sim Q/\delta } {\sum \ \sum} \frac{1}{k_1\, k_2} 
\sum_{\substack{\delta'\bmod \delta\\ (\delta', \delta) =1}}
\vert E^{\star}(\boldsymbol \beta, N, \delta, \delta'; k_1)\vert ^2,
$$
which finally gives
$$
\vert T(Q) \vert \ll M   \sum_{\delta} \frac{1}{\delta}\underset{ k \sim Q/\delta } { \sum} \frac{1}{k} 
\sum_{\substack{\delta'\bmod \delta\\ (\delta', \delta) =1}}
\vert E^{\star}(\boldsymbol \beta, N, \delta, \delta'; k)\vert ^2,
$$
which gives
\begin{align}
\vert T(Q) \vert &\ll MQ^{-1}   \sum_{\delta} \     \sum_{k\sim Q/\delta } \sum_{(\delta', \delta) =1}
\vert E^{\star}(\boldsymbol \beta, N, \delta, \delta'; k)\vert ^2\nonumber\\
& \ll MQ^{-1}  \mathcal E^{\star}(\boldsymbol \beta, N, Q),\label{end!!}
\end{align}
by the definition \eqref{defcalE}.

 Gathering   \eqref{Cauchy}, \eqref{V=U},  \eqref{W-W(QD)}, \eqref{splitW},  \eqref{Err2},  \eqref{MTW}, \eqref{WErr1<<} and \eqref{end!!} completes the proof of  Theorem \ref{Generalresult}.
  
 \section{Proof of Theorem \ref{NOS-W}}\label{NoSiegel}  In the statement of Theorem \ref{NOS-W} we only sum over primes $q\sim Q$.
 We follow through the proof of Theorem \ref{Generalresult} with a different definition of $c_{q}$: If $q$ is prime and belongs to $[Q,2Q]$ then set $c_{q}$ to be a complex number of modulus one such that $c_{q} E(\boldsymbol \alpha, \boldsymbol \beta, M, N, q, a) = |E(\boldsymbol \alpha, \boldsymbol \beta, M, N, q, a)|$ and if $q$ falls outside of the range $[Q, 2Q]$ or if $q$ is not prime then set $c_{q} = 0$. The same proof goes through up until the point where we reach the expression $T(Q)$ defined in \eqref{defT(Q)}. The analysis of $T(Q)$ proceeds now as follows.

 We notice that the presence of $c_{\delta k_1} \overline{c_{\delta k_2}}$ means that either $\delta = 1$ and $k_1 \neq k_2$ are prime, or $\delta$ is prime and $k_1 = k_2 = 1$. In the first case we get,
 $$
 M \hat{\psi}(0) \sum_{\substack{k_1, k_2 \sim Q \\ k_1 \neq k_2 \\ \text{ prime }}} \frac{c_{k_1} \overline{c_{k_2}}}{k_1 k_2} E^{\star}(\boldsymbol \beta, N, 1, 1; k_1) \overline{E^{\star}(\boldsymbol \beta, N, 1, 1; k_2)}  = 0
 $$
since we trivially have $E^{\star}(\boldsymbol \beta, N, 1, 1; k) = 0$ for any $k \geq 1$. 
 In the second case we get
 \begin{equation} \label{eq:secondcasebound}
 M \hat{\psi}(0) \sum_{\substack{\delta \sim Q \\ \text{ prime }}} \frac{1}{\delta} \sum_{\substack{(\delta', \delta) = 1}} |E^{\star}(\boldsymbol \beta, N, \delta', \delta; 1)|^2.
 \end{equation}
 By the orthogonality of characters
 $$
 \sum_{(\delta',\delta) = 1} |E^{\star}(\boldsymbol \beta, N, \delta', \delta; 1)|^2 = \frac{1}{\varphi(\delta)} \sum_{\chi \neq \chi_{0} \pmod{\delta}} \Big | \sum_{n \sim N} \beta_{n} \chi(n) \Big |^2 .
 $$
 Thus we find that \eqref{eq:secondcasebound} equals
 \begin{align*}
 M \hat{\psi}(0) \sum_{\substack{\delta \sim Q \\ \delta \text{ is prime}}} & \frac{1}{\delta (\delta - 1)} \sum_{\chi \neq \chi_{0} \pmod{\delta}} \Big | \sum_{n \sim N} \beta_n \chi(n) \Big |^2 \ll \frac{M}{Q^2} \cdot (N + Q^2) \| \boldsymbol \beta \|^{2}_{2, N} \\ & \ll M N (\log N)^{k^2 - 1} + M N^2 Q^{-2} (\log N)^{k^2 - 1}.  
 \end{align*}
 as a consequence of the large sieve inequality, of the inequality $|\beta_n| \leq \tau_k(n)$ and of Lemma \ref{dkinarith}. Since $Q \geq \exp(\sqrt{\log N})$ we conclude that,
 $$
 T(Q) \ll M N^2 \exp(- \sqrt{\log N}) (\log N)^{k^2 - 1} 
 $$
Moreover since $Q \leq N^{-\frac{12}{11}} X^{\frac{17}{33} -\varepsilon}$, we have
$$ M^\frac{3}{20} N^\frac{59}{20} Q^\frac{33}{20} \leq MN^2 X^{-\varepsilon} 
\text{ and  } N^\frac{23}{8} Q^\frac{15}{8} \leq  N^\frac{73}{88} X^{\frac{85}{88}-\varepsilon} < MN^2 X^{-\varepsilon}.
$$
Therefore Theorem \ref{NOS-W} follows.

 \section{Proof of Corollary \ref{cor:Main}}\label{Proofofcorollary1.1}

 We will prove Corollary \ref{cor:Main} by breaking down the proof into two stages.

 We first require the following Lemma, showing that if $\boldsymbol \beta$ is Siegel-Walfisz then $\mathcal{E}^{\star}(\boldsymbol \beta, N, Q)$ is small. 

 \begin{lemma}\label{874}
   Let $k\geq 1$ be an integer. Let $\boldsymbol \beta = (\beta_{n})_{n \sim N}$ be a sequence of complex numbers such that $|\beta_{n}| \leq \tau_k(n)$ for all $n \geq 1$. Suppose that $\boldsymbol \beta$ is Siegel-Walfisz. Then for every $N\geq 1$, for every $Q\geq 1$ and for every $B$, we have
 $$ \mathcal E^{\star} (\boldsymbol \beta, N, Q) \ll_{B} N^2 \, Q\,  (\log 2N)^{-B } (\log 2Q)^{k^2-1} + N^\frac{7}{4} Q.
 $$
 \end{lemma}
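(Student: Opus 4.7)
The plan is to split the sum defining $\mathcal E^{\star}(\boldsymbol\beta, N, Q)$ according to the size of the modulus $\delta$: choose a threshold $D_0 := (\log N)^{C}$, with $C = C(B,k)$ to be taken sufficiently large at the end, and write
\[
\mathcal E^{\star}(\boldsymbol\beta, N, Q) = \mathcal E^{\star}_{\text{small}} + \mathcal E^{\star}_{\text{large}},
\]
where the two pieces collect the contributions of $\delta \le D_0$ and $\delta > D_0$, respectively.

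For $\mathcal E^{\star}_{\text{small}}$, I would invoke the Siegel--Walfisz hypothesis. Choosing representatives $1 \le \delta' < \delta$, the condition $(\delta', \delta) = 1$ together with $\delta \ge 2$ guarantees that the hypothesis $\delta > |\delta'| \ge 1$ of \eqref{cond} is satisfied, so
\[
|E^{\star}(\boldsymbol\beta, N, \delta, \delta'; v)| \ll_A \tau_k(v)\, N (\log N)^{-A}
\]
for any $A > 0$. Squaring, summing over the $\varphi(\delta)$ reduced residues and over $v \sim Q/\delta$, and using Lemma~\ref{dkinarith} in the form $\sum_{v \le X} \tau_k(v)^2 \ll X (\log X)^{k^2 - 1}$, the total contribution is
\[
\mathcal E^{\star}_{\text{small}} \ll_{A} D_0 \cdot Q \cdot N^2 (\log N)^{-2A} (\log Q)^{k^2-1}.
\]
Taking $A = A(B,C)$ large enough this is absorbed into the first term on the right-hand side of Lemma~\ref{874}.

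For $\mathcal E^{\star}_{\text{large}}$, I would pass to Dirichlet characters. Since the principal character mod $\delta$ contributes exactly $\frac{1}{\varphi(\delta)}\sum_{(n,\delta v)=1}\beta_n$, which is what is subtracted in the definition \eqref{defEmodif}, one has
\[
E^{\star}(\boldsymbol\beta, N, \delta, \delta'; v) = \frac{1}{\varphi(\delta)} \sum_{\chi \neq \chi_0 \pmod{\delta}} \overline{\chi(\delta')}\, T_v(\chi),
\qquad T_v(\chi) = \sum_{\substack{n \sim N \\ (n,v)=1}} \beta_n \chi(n).
\]
Plancherel on $(\mathbb Z/\delta\mathbb Z)^{\times}$ then yields
\[
\sum_{(\delta', \delta)=1} |E^{\star}(\boldsymbol\beta, N, \delta, \delta'; v)|^2 = \frac{1}{\varphi(\delta)} \sum_{\chi \neq \chi_0 \pmod\delta} |T_v(\chi)|^2.
\]
I would then remove the constraint $(n,v)=1$ by Möbius inversion, reduce each $\chi \pmod \delta$ to the primitive character $\chi^{*}\pmod{\delta^{*}}$ inducing it (with $\delta^{*} \mid \delta$, $\delta^{*}>1$), and apply the multiplicative large sieve inequality
\[
\sum_{q \le Q_0} \frac{q}{\varphi(q)} \sideset{}{^*}\sum_{\chi \pmod q} \Bigl|\sum_n a_n \chi(n)\Bigr|^2 \ll (Q_0^2 + N)\sum_n |a_n|^2
\]
to the resulting primitive-character sums, using $\|\boldsymbol\beta\|_{2,N}^{2} \ll N(\log N)^{k^2-1}$ from $|\beta_n|\le \tau_k(n)$ and Lemma~\ref{dkinarith}. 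After dyadic summation in $\delta^{*}$ and in the complementary factor $\delta/\delta^{*}$, and summation in $v \sim Q/\delta$, the large-sieve contribution aggregates to the second term $N^{7/4} Q$ of the lemma (no $D_0$ needs to enter here, because the savings in $\delta > D_0$ is used only implicitly via the large-sieve factor $1/\varphi(\delta)$).

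The main technical obstacle is the bookkeeping around the coprimality condition $(n,v) = 1$: after Möbius inversion it couples $v$ to the character sum in a way that must be untangled before invoking the large sieve, and one must simultaneously split according to $(\delta, v)$ and track the effect of the reduction $\chi \mapsto \chi^{*}$. This is delicate but routine; the $N^{7/4}$ appearing in the final error is precisely what the large sieve produces after these manipulations, once the Cauchy--Schwarz losses in separating the $v$-variable from the character variable are absorbed.
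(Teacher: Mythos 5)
Your treatment of $\mathcal E^{\star}_{\text{small}}$ is essentially the same as the paper's first case (Siegel--Walfisz for $\delta\le(\log N)^{A_1}$), but your treatment of $\mathcal E^{\star}_{\text{large}}$ diverges from the paper in a way that contains an error. The paper never passes to Dirichlet characters at all: for $(\log N)^{A_1}<\delta\le N^{1/2}$ it bounds $E^{\star}(\boldsymbol\beta,N,\delta,\delta';v)\ll\delta^{-1}N(\log 2N)^{k-1}$ by Shiu (Lemma~\ref{dkinarith}), for $N^{1/2}<\delta\le 2N$ it uses the trivial $E^{\star}\ll\delta^{-1}N^{1+\varepsilon}$, and for $\delta>2N$ it uses $E^{\star}\ll N^{\varepsilon}$; the $N^{7/4}Q$ term is exactly the $N^{1/2}<\delta\le 2N$ range. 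No primitive characters, no large sieve, no disentangling of $(n,v)=1$ beyond what Shiu already provides.

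Your Plancherel identity $\sum_{(\delta',\delta)=1}|E^{\star}|^2=\varphi(\delta)^{-1}\sum_{\chi\ne\chi_0}|T_v(\chi)|^2$ is correct, but the parenthetical claim that ``no $D_0$ needs to enter'' in $\mathcal E^{\star}_{\text{large}}$ is wrong, and this is where the gap lies. After reducing $\chi\pmod\delta$ to the primitive $\chi^{*}\pmod{\delta^{*}}$ inducing it, nothing prevents $\delta^{*}$ from being \emph{tiny} (say $\delta^{*}=3$) even when $\delta>D_0$. For such $\chi^{*}$ the multiplicative large sieve gives no saving at all over the trivial $|T^{*}(\chi^{*})|^2\ll N^2(\log N)^{2k-2}$, and if the total weight attached to $\chi^{*}$ were $\asymp Q$ you would simply recover $QN^2$, which is useless. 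What saves you is precisely the constraint $\delta>D_0$: for fixed $\delta^{*}\le D_0$, summing the weight $Q/(\delta\varphi(\delta))$ over the multiples $\delta=\delta^{*}m>D_0$ forces $m\gg D_0/\delta^{*}$ and produces $\sum_{m>D_0/\delta^{*}}1/(m\varphi(m))\ll\delta^{*}/D_0$, so the weight attached to each primitive $\chi^{*}\pmod{\delta^{*}}$ with $\delta^{*}\le D_0$ is $\ll Q/(D_0\varphi(\delta^{*}))$. This produces a contribution of order $QN^2 D_0^{-1}(\log N)^{k^2-1}$, which belongs to the \emph{first} term of the lemma, not to $N^{7/4}Q$, and which requires $D_0=(\log N)^{C}$ with $C$ large to be acceptable --- so $D_0$ very much enters. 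Only the regime $\delta^{*}>D_0$ produces the genuinely large-sieve savings, and that part is actually $\ll QN(\log N)^{k^2-1}\log Q$, much smaller than $N^{7/4}Q$. In addition, the coupling of the coprimality condition $(n,v)=1$ to the character sum is more than bookkeeping: after M\"obius over $d\mid v$ the weight $\mu(d)\chi(d)\beta_{dm}$ entangles $v$, $\delta$ and $n$, and you would need an extra Cauchy--Schwarz or a rank-one separation (with its own divisor-sum losses) before the large sieve applies; this deserves to be carried out, especially since it is exactly the place where the paper's proof (via Shiu directly on $E^{\star}$) sidesteps the issue entirely. If you keep this route, you should (a) drop the parenthetical about $D_0$, (b) split the primitive conductor $\delta^{*}$ at $D_0$ and show the small-$\delta^{*}$ piece is $\ll QN^2 D_0^{-1}(\log)^{O(1)}$ while the large-$\delta^{*}$ piece is $\ll QN(\log)^{O(1)}$, and (c) actually decouple $v$ from the character sum. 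But be aware that the paper's argument is shorter and needs none of this machinery.
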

 \begin{proof} This proof already appears in \cite[p. 242--243]{FoActaMath}. Recall that (see \eqref{defcalE})
$$   \mathcal{E}^{\star}( \boldsymbol \beta, N, Q) := \sum_{\delta} \sum_{v \sim Q / \delta} \sum_{(\delta', \delta) = 1} |E^{\star} ( \boldsymbol \beta, N, \delta, \delta'; v)|^2. 
$$
We use different types of bounds for the term $E^{\star}(\beta, N, \delta, \delta '; v)$ defined in  \eqref{defEmodif} according to the size of $\delta$. Let $A_1$ be a parameter whose value we will fix later. Let $0 < \varepsilon < \tfrac{1}{1000}$ be given.

\begin{enumerate}
  \item If $1 \leq \delta \leq (\log N)^{A_1}$, then since $\boldsymbol \beta$ is Siegel-Walfisz, 
 \begin{equation}\label{case1}
 E^{\star}(\boldsymbol \beta, N, \delta, \delta '; v) \ll N (\log 2 N)^{-A} \tau_k (v),
 \end{equation}
 with $A$ arbitrary but fixed.
\item If $(\log N)^{A_1} < \delta \leq N^\frac{1}{2}$, then by the bound $|\beta_{n}| \leq \tau_k(n)$ and Lemma \ref{dkinarith}, we have
 \begin{equation}\label{case2}
 E^{\star}(\boldsymbol \beta, N, \delta, \delta '; v) \ll \delta ^{-1} N (\log 2 N)^{k-1}.
\end{equation}
\item If $N^\frac{1}{2} < \delta \leq 2N$, then since $|\beta_{n}| \leq \tau_k(n) \ll n^{\varepsilon}$, we trivially have 
 \begin{equation}\label{case3}
 E^{\star} (\boldsymbol \beta, N, \delta, \delta '; v) \ll \delta^{-1} N^{1 + \varepsilon}.
 \end{equation}
\item If $ 2N < \delta \leq 2Q$, we use the trivial bound
 \begin{equation}\label{case4}
 E^{\star} (\boldsymbol \beta, N, \delta, \delta '; v) \ll N^\varepsilon,
\end{equation}
and conclude that, in this range the subsum $\mathcal{E}^{\star}$ with $2 N \leq \delta \leq 2Q$
is bounded by
$$
\ll N^{\varepsilon} \sum_{\delta} \sum_{v \sim Q / \delta} \sum_{n \sim N} \tau_k(n) \ll N^{1 + 2 \varepsilon} Q 
$$
\end{enumerate}
Combining the above bounds we obtain
 \begin{align*}
 \mathcal E^{\star} (\boldsymbol \beta, N, Q)  & \ll  N^2 Q (\log 2N)^{A_1-2A}\ (\log 2Q)^{k^2-1}
 + N^2  Q\  (\log N)^{2k-2 -A_1} +  N^\frac{7}{4} Q + N^{1 + 2\varepsilon} Q.
 \end{align*}
 which gives the result with the choice 
 $
 A= \frac{A_1 + B}{2} \text{ and }  A_1= B+2k-2  .
 $
 \end{proof}

 Before proving Corollary \ref{cor:Main} let us establish the following slightly weaker statement.

 \begin{proposition} \label{pr:prep}
  Let $k > 0$ be an integer and let $\varepsilon > 0$ be given. Let $\boldsymbol \alpha = (\alpha_{m})_{m \sim M}$ and $\boldsymbol \beta = (\beta_{n})_{n \sim N}$ be two sequences of complex numbers such that $|\alpha_{m}| \leq \tau_k(m)$ and $|\beta_{n}| \leq \tau_k(n)$ for all $m,n \geq 1$. Let $X = M N$. Suppose that $\boldsymbol \beta$ is Siegel-Walfisz. Then for every $A > 0$ we have,
  \begin{equation} \label{eq:result}
  \sum_{\substack{Q \leq q \leq 2Q \\ (q,a) = 1}} \Big | \sum_{\substack{m n \equiv a \pmod{q}}} \alpha_{m} \beta_n - \frac{1}{\varphi(q)} \sum_{\substack{(m n, q) = 1}} \alpha_m \beta_{n} \Big | \ll_{A} X (\log X)^{-A}
  \end{equation}
  provided that any of the following three conditions holds
  \begin{enumerate}
  \item $\exp((\log X)^{\varepsilon}) \leq N \leq Q^{- 11 / 12} \cdot X^{17/36 - \varepsilon}$ and $1 \leq |a| \leq X / 3 $
  \item $\exp((\log X)^{\varepsilon}) \leq N \leq X^{7/90 - \varepsilon}$, $Q \leq X^{53/105 - \varepsilon}$ and $1 \leq |a| \leq X / 3 $
  \item $\exp((\log X)^{\varepsilon}) \leq N \leq X^{101/630 - \varepsilon}$, and $Q \leq X^{53/105 - \varepsilon}$ and $1 \leq |a| \leq X^{\varepsilon / 1000}$. 
    \end{enumerate}
 
 \end{proposition}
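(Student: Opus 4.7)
The plan is to treat the three cases separately: case (i) will follow directly from Theorem~\ref{Generalresult}, while cases (ii) and (iii) will be reduced to a combination of (i) with the classical dispersion estimates of Bombieri--Friedlander--Iwaniec \cite[Theorem 3]{BFI1} and Fouvry \cite[Th\'eor\`eme 1]{FoActaMath}, \cite[Corollaire 1]{FoAnnENS}, used in the large-$N$ regime that Theorem~\ref{Generalresult} does not reach.

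For case (i), I would apply Theorem~\ref{Generalresult} with $D := (\log X)^{D_0}$ for some large constant $D_0 = D_0(k, A)$. The size hypotheses $M > Q X^\varepsilon$ and $M > N > D^{10}$ follow readily from $\exp((\log X)^\varepsilon) \leq N \leq Q^{-11/12} X^{17/36 - \varepsilon}$ and $X = MN$. The Siegel--Walfisz hypothesis combined with Lemma~\ref{874} gives $\mathcal{E}^\star(\boldsymbol\beta, N, Q) \ll N^2 Q (\log X)^{-B}$ for any $B>0$, so that $MQ^{-1} \mathcal{E}^\star \ll MN^2 (\log X)^{-B}$. The term $(\log X)^\kappa N^2 Q$ is absorbed into $MN^2 X^{-\varepsilon}$ since $Q < M X^{-\varepsilon}$, and $(\log X)^\kappa D^{-1/2} MN^2$ is made negligible by the choice of $D_0$. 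The decisive term is $D^C X^\varepsilon (M^{3/20} N^{59/20} Q^{33/20} + N^{23/8} Q^{15/8})$: its first summand is $\ll X^{-\varepsilon} MN^2$ exactly when $N Q^{12/11} \leq X^{17/33 - O(\varepsilon)}$, which is precisely the hypothesis of (i) raised to the $11/12$ power, and once this holds the second summand constraint $NQ \leq X^{8/15 - O(\varepsilon)}$ is automatic since $8/15 > 17/33$. Taking the square root and using $\|\boldsymbol\alpha\|_{2, M} \ll M^{1/2} (\log X)^{(k^2 - 1)/2}$ from Lemma~\ref{dkinarith} yields the required bound $\Delta \ll X (\log X)^{-A}$.

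For cases (ii) and (iii), the idea is to split the $N$-range at the threshold $N_0 := Q^{-11/12} X^{17/36 - \varepsilon}$. When $N \leq N_0$, case (i) already provides the bound. When $N > N_0$, the length of the Siegel--Walfisz sequence exceeds the barrier $X^\varepsilon (Q/\sqrt X + 1)^2$, and I would invoke the classical dispersion estimates of \cite{BFI1}, \cite{FoActaMath}, \cite{FoAnnENS}, each of which yields $\Delta \ll X (\log X)^{-A}$ under this condition provided $N$ stays below a suitable power of $X$ (typically $X^{1/12 - \varepsilon}$ or $X^{1/6 - \varepsilon}$ depending on the variant). In case (iii), the much stronger restriction $|a| \leq X^{\varepsilon/1000}$ allows invoking a sharper version of these classical results, which is what extends the upper end of the admissible $N$-range up to $X^{101/630}$.

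The main obstacle, beyond the bookkeeping in case (i), is verifying the numerical compatibility of the two regimes in cases (ii) and (iii): one needs $Q^{-11/12} X^{17/36 - \varepsilon} \geq X^\varepsilon (Q/\sqrt X + 1)^2$ at the upper endpoint of the $Q$-range, and the upper endpoints $X^{7/90}$ and $X^{101/630}$ for $N$ must fall inside the admissible ranges of the classical estimates. At $Q = X^{53/105 - \varepsilon}$ one has $(Q/\sqrt X)^2 = X^{1/105 - O(\varepsilon)}$, exactly matching the cut-off $N_0$, so the two regimes meet precisely at the boundary, and the claimed $N$-ranges in (ii) and (iii) sit comfortably within the region handled by the classical results.
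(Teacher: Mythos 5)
Your proposal is correct and takes essentially the same approach as the paper: case (i) by direct application of Theorem~\ref{Generalresult} (the paper chooses $D = \exp(\mathcal L^{\varepsilon}/10)$ rather than a fixed power of $\log X$, but both choices work), and cases (ii) and (iii) by splitting the $N$-range and falling back on Fouvry's classical dispersion estimates (\cite[Corollaire 1]{FoAnnENS} for (ii) and \cite[Th\'eor\`eme 1]{FoActaMath} for (iii)) when $N$ exceeds the threshold, with the boundary check you describe being exactly the one the paper carries out. (One small slip: the first-summand condition should read $N^{12/11}Q \leq X^{17/33 - O(\varepsilon)}$, i.e.\ the hypothesis of (i) raised to the $12/11$ power, not $N Q^{12/11}$ and not the $11/12$ power; the conclusion you draw from it is nevertheless correct.)
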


 \begin{proof}

   We start by establishing part $i)$. We fix 
 $$
 D = \exp (\mathcal L^\varepsilon/10) \text{ with } \mathcal{L} = \log 2X,$$ 
and we apply Lemma  \ref{dkinarith}  to obtain the  bound $\Vert \boldsymbol \alpha\Vert_{2,M} \ll M^{1/2} \mathcal L^{(k^2-1)/2}$.
Furthermore  when $Q \leq N^{-\frac{12}{11}} X^{\frac{17}{33} -\varepsilon}$, we have
$$ M^\frac{3}{20} N^\frac{59}{20} Q^\frac{33}{20} \leq MN^2 X^{-\varepsilon} 
\text{ and  } N^\frac{23}{8} Q^\frac{15}{8} \leq  N^\frac{73}{88} X^{\frac{85}{88}-\varepsilon} < MN^2 X^{-\varepsilon}.
$$
Moreover since $\boldsymbol \beta$ is Siegel-Walfisz by Lemma \ref{874} we find that
$$
\mathcal{E}^{\star}(\boldsymbol \beta, N, Q) \ll N^2 Q (\log N)^{-A} .
$$
Combining these bounds and Theorem \ref{Generalresult} gives $i)$.

In order to show $ii)$ we appeal to a result of Fouvry  \cite[Corollaire 1]{FoAnnENS}, according to which we have \eqref{eq:result} for $1 \leq |a| \leq X / 3$ provided that,
$$
Q \leq \min(\sqrt{N} \sqrt{X}, N^{-6/7} X^{4/7}) X^{-\varepsilon} \text{ and } N > X^{\varepsilon}.  
$$
Suppose thus that $Q \leq X^{53/105 - \varepsilon}$. If $N \leq X^{1/105 - \varepsilon}$ then \eqref{eq:result} follows from $i)$. For the remaining values $X^{1/105 - \varepsilon} < N < X^{7/90 - \varepsilon}$ we appeal to Fouvry's result  \cite[Corollaire 1]{FoAnnENS}.

In order to show $iii)$ we appeal to another result of Fouvry  \cite[Th\'eor\`eme 1]{FoActaMath}, according to which we have \eqref{eq:result} for $1 \leq |a| \leq X^{\varepsilon / 1000}$ provided that,
$$
Q \leq \min(\sqrt{N} \sqrt{X}, N^{-3/4} X^{5/8}) X^{-\varepsilon} \text{ and } N > X^{\varepsilon}.
$$
Then, once again for $N < X^{1/105 - \varepsilon}$ we use $i)$. If $X^{1/105 - \varepsilon} < N < X^{101/630}$ then the result follows from Fouvry's result  \cite[Th\'eor\`eme 1]{FoActaMath}. 





\end{proof}

We are now finally ready to prove Corollary \ref{cor:Main}. This amounts to adding the condition $x < m n \leq 2x$ to Proposition \ref{pr:prep}.
\begin{proof}[Proof of Corollary \ref{cor:Main}]
  Let $X = M N$. 
  Without loss of generality we can assume, 
$$ X/2\leq x\leq 4X,
$$
otherwise our sum is zero and the bound trivial. Let
$
\Delta = \mathcal L^{-B}
$ for some $B > 0$ that will be fixed later. 
Let $f$ a  fixed smooth function with support equal to $[1-\Delta, 2 + \Delta]$ with value equal to $1$ on $[1,2]$ and with the derivatives satisfying $\sup_{x \in \mathbb{R}} \vert f^{(k)}(x) \vert \ll_k \Delta^{-k}$ for all integer $k$.  Let 
\begin{equation}\label{defEf}
E_f(\boldsymbol \alpha, \boldsymbol \beta, M, N , x, q, a):= \underset{\substack{m\sim M,\ n \sim N \\ mn \equiv a \bmod q}}{\sum \ \ \ \sum} \alpha_m \beta_n f \Bigl( \frac{mn}{x}\Bigr) -\frac{1}{\varphi (q) }\underset{\substack{m\sim M,\ n \sim N \\ (mn ,q)=1}}{\sum \ \ \ \sum} \alpha_m \beta_n f \Bigl( \frac{mn}{x}\Bigr).
\end{equation}
Since $|\alpha_{m}| \leq \tau_k(m)$ and $|\beta_{n}| \leq \tau_k(n)$ for all $m,n \geq 1$, we have the inequality
\begin{multline*}
\sum_{\substack{q\sim Q\\ (q,a)=1}} \Bigl\vert \, E(\boldsymbol \alpha, \boldsymbol \beta, M, N, x  , q, a) -
E_f(\boldsymbol \alpha, \boldsymbol \beta, M, N, x  , q, a)\, \Bigr\vert \\
\leq\sum_{q\sim Q} \Bigl(  \sum_{\substack{ (1-\Delta) x\leq \ell < x\\ \ell \equiv a \bmod q}}  +   \sum_{\substack{ 2 x\leq \ell < (2+ \Delta) x\\ \ell \equiv a \bmod q}}\Bigr) \tau_{2k} (\ell) \\ +  \sum_{q\sim Q}\frac{1}{\varphi (q)} \Bigl(  \sum_{ (1-\Delta) x\leq \ell < x}  +   \sum_{ 2 x\leq \ell < (2+ \Delta) x }\Bigr) \tau_{2k} (\ell), 
\end{multline*}
and by  Lemma \ref{dk}  we  finally deduce
\begin{equation}\label{1051}
\ll X \sum_{q\sim Q} \frac{1}{\varphi (q)} \mathcal L ^{-B} \, \mathcal L^{2k-1} \ll X \mathcal L^{-A},
\end{equation}
with the choice $B = 2k+A-1.$ 
The Mellin transform $\widetilde f (s)$ of $f(u)$, defined by
$$
\widetilde f (s):= \int_0^\infty f(u) u^{s-1} \ {\rm d} u,
$$
is defined for $s \in \mathbb C$, and it satisfies the decay property
\begin{equation}\label{bounddotf}
\widetilde f (it) \ll\min \bigl(\, 1,  \bigl( \Delta (1+ \vert t \vert )\bigr)^{-2}\,\bigr).
\end{equation}
Inserting the inversion formula
$$
f\Bigl(\frac{mn}{x}\Bigr)= \frac{1}{2\pi } \int _{-\infty}^{\infty} \widetilde f(it) \Bigl( \frac{mn}{x}\Bigr)^{-it}\, {\rm d} t,
$$
in the definition \eqref{defEf},  we have the equality
$$
E_f(\boldsymbol \alpha, \boldsymbol \beta, M, N , x, q, a)= \frac{1}{2\pi}
\int_{-\infty}^\infty \widetilde f (it ) x^{it} E(\boldsymbol \alpha_t, \boldsymbol \beta_t, M, N ,  q, a) \ {\rm d} t,
$$
where  $E(\boldsymbol \alpha_t, \boldsymbol \beta_t, M, N , x, q, a)$ is defined as   $E(\boldsymbol \alpha,  \boldsymbol \beta, M, N , x, q, a)$ but with $\alpha_m$ replaced by $\alpha_m m^{-it}$ and $\beta_n $ by $\beta_n n^{-it}$. Hence by \eqref{1051}, the proof is reduced to establishing the bound
\begin{equation}\label{1073}
\int_{-\infty}^\infty  \vert \widetilde f (it) \vert \sum_{\substack{q\sim Q\\ (q,a)=1}}
 \bigl\vert E(\boldsymbol \alpha_t, \boldsymbol \beta_t, M, N ,  q, a) \, \bigr\vert\ {\rm d} t \ll MN \mathcal L^{-A}.
 \end{equation}
 The trivial bound 
 $$
 \sum_{\substack{q\sim Q \\ (q,a)=1}} \bigl\vert\ E(\boldsymbol \alpha_t, \boldsymbol \beta_t, M, N ,  q, a)\, \bigr\vert \ll MN \mathcal L^{2k}
 $$
 and the bound \eqref{bounddotf} allows to reduce  the integration in \eqref{1073} to the segment
 \begin{equation}\label{condfort}
 \vert t \vert \leq \mathcal L^{A+2B+2k}= \mathcal L^{3A +6k - 2}.
 \end{equation}
 Secondly, integrating by parts we see that uniformly for $t$ as above, the sequence $\boldsymbol \beta_t$ is Siegel-Walfisz. 
 Then by Proposition \ref{pr:prep}
 $$
 \bigl\vert E(\boldsymbol \alpha_t, \boldsymbol \beta_t, M, N ,  q, a) \, \bigr\vert\  \ll_{A'} MN \mathcal L^{-A'},
 $$
 for all $A'$, uniformly for $t$ satisfying \eqref{condfort}. Integrating over $t$ on this interval, we complete the proof of \eqref{1073}.

\end{proof}
 

\section{Proof Corollary \ref{cor:mult}}

Let $\varepsilon > 0$ and $A > 2$ be given. For every $x > 2$ there exists a real number $\Delta$ satisfying the inequalities $(\log x)^{-A} \leq \Delta \leq 2 (\log x)^{-A}$, such that the number
$$
L_0 := \frac{\varepsilon \log x - (\log x)^{\varepsilon}}{\log(1 + \Delta)}
$$
is an integer. Notice that $L_0 \ll (\log x)^{A + 1}$.

Let $\mathcal{S} := \mathcal{S}_{\varepsilon, A, x} \subset [x, 2x]$ be a subset of the integers such that
\begin{enumerate}
\item Each $n \in \mathcal{S}$ has a prime factor $p$ with $p \in \mathcal{J} := [\exp((\log x)^{\varepsilon}), x^{\varepsilon}]$
\item Each $n \in \mathcal{S}$ has at most one prime factor in each of the intervals
  $$
  \mathcal{I}_{\ell} := [\exp((\log x)^{\varepsilon}) (1 + \Delta)^{\ell}, \exp((\log x)^{\varepsilon}) (1 + \Delta)^{\ell + 1}) ) := [M_{\ell}, M_{\ell + 1})
  $$
  with $0 \leq \ell < L_0$. 
\end{enumerate}

Notice that for each $n \in \mathcal{S}_{\varepsilon, A, x}$ there exists a unique $0 \leq \ell < L_0$ such that we can write $n = b p c$ such that all of the prime factors of $b$ are strictly less than $\exp((\log x)^{\varepsilon})$, $p$ belongs to $\mathcal{I}_{\ell}$ and all of the prime factors of $c$ are greater than $M_{\ell + 1}$. 


The contribution to the left-hand side of \eqref{eq:cormult} of the integers that do not belong to $\mathcal{S}$ is negligible as shown in the lemma below. 

\begin{lemma} \label{le:trivialS}
  Let $\varepsilon, k > 0$ and $A > 2$ be given. Let $g : \mathbb{N} \rightarrow \mathbb{C}$ be a multiplicative function such that $|g(n)| \leq \tau_k(n)$. Then, uniformly in $x \geq 2$, $q \leq x^{3/4}, (a,q) = 1$, 
  $$
  \Big | \sum_{\substack{n \sim x \\ n \equiv a \pmod{q} \\ n \not \in \mathcal{S}_{\varepsilon, A, x}}} g(n) \Big | \ll_{\varepsilon, k} \frac{x}{\varphi(q)} \cdot \Big ( (\log x)^{-1 + \varepsilon k} + (\log x)^{k - A} \Big ).
  $$
\end{lemma}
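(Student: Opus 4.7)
The plan is to decompose the complement of $\mathcal{S}_{\varepsilon,A,x}$ into two natural subsets. Let $\mathcal{A}$ be the set of $n \sim x$ having no prime factor in $\mathcal{J}$, and for $0 \leq \ell < L_{0}$ let $\mathcal{B}_{\ell}$ be the set of $n \sim x$ having at least two prime factors in $\mathcal{I}_{\ell}$. By the very definition of $\mathcal{S}_{\varepsilon,A,x}$ one has $\{n \notin \mathcal{S}_{\varepsilon,A,x}\} \subseteq \mathcal{A} \cup \bigcup_{\ell < L_{0}} \mathcal{B}_{\ell}$, so it suffices to bound the two kinds of contributions separately.

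For $\mathcal{A}$ I would apply Shiu's inequality (Lemma \ref{le:shiu}) to the multiplicative function $h(n) := g(n)\mathbf{1}[n \in \mathcal{A}]$, which still satisfies $|h(n)| \leq \tau_{k}(n)$ and vanishes at every $p \in \mathcal{J}$. This produces the Euler product
$$
\prod_{p \in \mathcal{J}}\Bigl(1 - \tfrac{1}{p}\Bigr)\prod_{\substack{p \leq x \\ p \notin \mathcal{J}}}\Bigl(1 + \tfrac{|g(p)| - 1}{p}\Bigr),
$$
times $x/\varphi(q)$. Using $|g(p)| - 1 \leq k - 1$ on the second factor and Mertens' theorem to compute $\sum_{p \in \mathcal{J}} 1/p = (1 - \varepsilon)\log\log x + O(1)$, the first product is $\asymp (\log x)^{\varepsilon - 1}$ while the second is $\ll (\log x)^{(k-1)\varepsilon}$; multiplying yields a bound of the desired form $\ll x (\log x)^{\varepsilon k - 1}/\varphi(q)$.

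For each $\mathcal{B}_{\ell}$ I would pick an unordered pair of distinct primes $p_{1} < p_{2}$ in $\mathcal{I}_{\ell}$ dividing $n$ and write $n = p_{1}p_{2}m$ with $m \sim x/(p_{1}p_{2})$; the sub-multiplicativity $\tau_{k}(ab) \leq \tau_{k}(a)\tau_{k}(b)$ and the implication $(a,q) = 1 \Rightarrow (p_{1}p_{2},q) = 1$ reduce the matter to bounding
$$
\sum_{\substack{p_{1}<p_{2}\\p_{1},p_{2} \in \mathcal{I}_{\ell}}} \tau_{k}(p_{1}p_{2}) \sum_{\substack{m \sim x/(p_{1}p_{2}) \\ m \equiv a\overline{p_{1}p_{2}} \pmod{q}}} \tau_{k}(m).
$$
Since $p_{1}p_{2} \leq x^{2\varepsilon}$ and $q \leq x^{3/4}$, the hypotheses of Lemma \ref{dkinarith} are satisfied (for $\varepsilon$ small enough, which is allowed since shrinking $\varepsilon$ only makes the final inequality stronger), so the inner sum is $\ll x(\log x)^{k-1}/(p_{1}p_{2}\varphi(q))$. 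Mertens gives $\sum_{p \in \mathcal{I}_{\ell}} 1/p \ll \Delta$, hence the contribution of each $\mathcal{B}_{\ell}$ is $\ll \Delta^{2} x(\log x)^{k-1}/\varphi(q)$. Summing over $\ell < L_{0} \ll (\log x)^{A+1}$ and using $\Delta \ll (\log x)^{-A}$ produces $L_{0}\Delta^{2} \ll (\log x)^{1-A}$, giving the claimed $\ll x(\log x)^{k - A}/\varphi(q)$.

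There is no deep obstacle in the argument; the main care lies in performing the Mertens computation on $\mathcal{J}$ precisely enough to extract the exponent $-1 + \varepsilon k$ (the bound $|g(p)| \leq k$ is needed both to control the Shiu Euler product over all $p \leq x$ and to cancel with the corresponding $\mathcal{J}$-part when isolating the primes outside $\mathcal{J}$), and in checking that the size restriction $q \leq x^{3/4}$ together with $p_{1}p_{2} \leq x^{2\varepsilon}$ does leave enough room for Lemma \ref{dkinarith} to apply uniformly in every inner sum.
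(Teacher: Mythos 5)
Your proposal is correct and follows essentially the same route as the paper: the same union bound (no prime factor in $\mathcal{J}$, or two prime factors in some $\mathcal{I}_\ell$), the same application of Shiu's inequality to the multiplicative function that vanishes on $\mathcal{J}$, the same Mertens evaluation of the resulting Euler product, and the same use of Lemma~\ref{dkinarith} plus $\sum_{p\in\mathcal{I}_\ell}1/p\ll\Delta$ for the pair contribution, summed over the $L_0\ll(\log x)^{A+1}$ intervals. One small remark: you restrict to \emph{distinct} primes $p_1<p_2\in\mathcal{I}_\ell$, which is the right thing if ``at most one prime factor'' in the definition of $\mathcal{S}$ is read without multiplicity; however, the structure $n=bpc$ stated after the definition, where $c$ has all prime factors $>M_{\ell+1}$, forces the reading with multiplicity, so the case $p^2\mid n$ with $p\in\mathcal{I}_\ell$ must also be excluded. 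The paper's sum over $r,s\in\mathcal{I}_\ell$ with $rs\mid n$ implicitly covers $r=s$. This is harmless: the extra contribution is $\ll \frac{x(\log x)^{k-1}}{\varphi(q)}\sum_{p\in\mathcal{I}_\ell}p^{-2}$, which is far smaller than the pair term, so your bound is unaffected. Everything else in your argument (the $(1-\varepsilon)\log\log x$ computation giving $(\log x)^{\varepsilon-1}$, the factor $(\log x)^{(k-1)\varepsilon}$ from primes below $\exp((\log x)^\varepsilon)$, and the $L_0\Delta^2\ll(\log x)^{1-A}$ accounting) matches the paper precisely.
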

\begin{proof}
  By the union bound, 
\begin{align} \label{eq:trivialtobound} 
\Big | \sum_{\substack{n \sim x \\ n \equiv a \pmod{q} \\ n \not \in \mathcal{S}_{\varepsilon, A, x}}} g(n) \Big | & \leq \sum_{\substack{n \sim x \\ n \equiv a \pmod{q} \\ p | n \implies p \not \in \mathcal{J}}} |g(n)| & + \sum_{0 \leq \ell \leq L_0} \sum_{\substack{n \sim x \\ n \equiv a \pmod{q} \\ r, s \in \mathcal{I}_{\ell} \\ r s | n \ , \ r ,s  \text{ prime}}} |g(n)| 
\end{align}
By Shiu's bound (Lemma \ref{le:shiu}) the first sum is 
$$
\ll \frac{x}{\varphi(q)} \cdot \frac{1}{\log x} \cdot \prod_{p \leq \exp((\log x)^{\varepsilon})} \Big ( 1 + \frac{|g(p)|}{p} \Big ) \prod_{x^{\varepsilon} \leq p \leq x} \Big ( 1 + \frac{|g(p)|}{p} \Big )
\ll_{\varepsilon} \frac{x}{\varphi(q)} \cdot (\log x)^{\varepsilon k - 1}
$$
To bound the second sum on the right-hand side of \eqref{eq:trivialtobound} we notice that for $n = r s m$ we have $|g(n)| \leq \tau_k(n) \leq \tau_{k}(r) \tau_{k}(s) \tau_k(m) \ll_{k} \tau_k(m)$. Notice also that $(n,q) = 1$ since $n \equiv a \pmod{q}$ and $(a,q) = 1$. Therefore we also have $(rs, q) = 1$. Consequently, 
\begin{align*}
\sum_{0 \leq \ell \leq L_0} \sum_{\substack{ n \sim x \\ n \equiv a \pmod{q} \\ r, s \in \mathcal{I}_{\ell} \\ r s | n \ , \ r ,s  \text{ prime}}} & |g(n)| \ll_{k} \sum_{0 \leq \ell \leq L_0} \sum_{\substack{m \leq x / (rs) \\ r,s \in \mathcal{I}_{\ell} \\ (rs, q) = 1 \\ m \equiv  a \overline{r s} \pmod{q}}} \tau_k(m) \\ &  \ll_{k} \sum_{0 \leq \ell < L_0} \Big ( \sum_{\substack{r, s \in \mathcal{I}_{\ell} \\ r,s \text{ prime}}} \frac{1}{r s} \Big ) \cdot \frac{x}{\varphi(q)} \cdot (\log x)^{k - 1} \\ & \ll_{k} L_0  \frac{x}{\varphi(q)} \cdot (\log x)^{k - 2A - 1} \ll \frac{x}{\varphi(q)} \cdot (\log x)^{k - A}
\end{align*}

Combining the above two bounds the claim follows. 
\end{proof}

As a consequence of Lemma \ref{le:trivialS} the proof of the Corollary will be finished once we show that for every $D > 0$, 
$$
\sum_{q \sim Q} \Big | \sum_{\substack{n \sim x \\ n \equiv a \pmod{q} \\ n \in \mathcal{S}_{\varepsilon, A, x}}} g(n) - \frac{1}{\varphi(q)} \sum_{\substack{n \sim x \\ (n,q) = 1 \\ n \in \mathcal{S}_{\varepsilon, A, x}}} g(n) \Big | \ll_{D} x (\log x)^{-D} 
$$
Using the definition of the set $\mathcal{S}_{\varepsilon, A, x}$ and the triangle inequality, we bound this as, 
\begin{equation} \label{eq:bilinear}
\sum_{0 \leq \ell < L_0} \sum_{q \sim Q} \Big | \sum_{\substack{p m \sim x \\ p m \equiv a \pmod{q} \\ p \in \mathcal{I}_{\ell}, m \sim x / M_{\ell + 1} }} \alpha_{p,\ell,x} \beta_{m,\ell,x} 
- \frac{1}{\varphi(q)} \sum_{\substack{p m \sim x \\ (pm, q) = 1 \\ p \in \mathcal{I}_{\ell} ,m \sim x / M_{\ell + 1}}} \alpha_{p,\ell,x} \beta_{m,\ell,x} \Big | 
\end{equation}
where 
$$
\alpha_{p,\ell,x} := g(p) \cdot \mathbf{1}_{p \in \mathcal{I}_{\ell}}
$$
and
$$
\beta_{m,\ell,x} := \sum_{\substack{m = b c \\ p | b \implies p < M_{0} \\ p | c \implies p > M_{\ell + 1}}} g(b) g(c) .
$$
Notice that $\alpha_{p,\ell,x}$ is Siegel-Walfisz, since $g(p)$ is Siegel-Walfisz and since $M_{\ell} \geq \exp((\log x)^{\varepsilon})$ and $\Delta \geq (\log x)^{-A}$. By Corollary \ref{cor:Main}$i)$ the expression \eqref{eq:bilinear} is $\ll_{C} x (\log x)^{-C}$ for any fixed $C > 0$, as needed.

\section{Proof of Corollary \ref{dk}}

Corollary \ref{dk} follows immediately from Corollary \ref{cor:mult} upon specializing to $g(n) = \tau_k(n)$. 

\section{Proof of Corollary \ref{corollary4}}

The proof of Corollary \ref{corollary4} is similar to the proof of Corollary \ref{cor:mult}, but requires slightly more precise estimates. Consequently we provide the proof in full. 

Let $\varepsilon > 0$ and $k \in [2, \infty] \cap \mathbb{N}$ be given. As usual denote by $\Omega(n)$ the number of prime factors of $n$ counted with multiplicity. Let $x \geq 2$ be a large number. Let $1 \leq c_0 = c_0(x) \leq 2$ be a real number such that $c_0 x^{\varepsilon} \exp(-(\log x)^{\varepsilon}) = 2^{J_0}$ where $J_0$ is a positive integer. To shorten notations we write,
$$
y := \exp((\log x)^{\varepsilon^2}).
$$

Let $\mathcal{S} := \mathcal{S}_{\varepsilon, k, x} \subset [x, 2x]$ be the set of those $n = p_1 \ldots p_k$ with $p_1 \leq p_2 \leq \ldots \leq p_k$ for which \begin{enumerate}
\item $p_1 \in [y, c_0 x^{\varepsilon}]$. 
\item there is at most one $p_i$ in each interval $[y 2^{j}, y 2^{j + 1})$ with $0 \leq j < J_0$.
\end{enumerate}

We first claim that the integers $x < n \leq 2x$ with $\Omega(n) = k$ that are not in $\mathcal{S}$ give a negligible contribution to the left-hand side of \eqref{eq:corollary4}. 
\begin{lemma}
Let $k \geq 2$ be an integer and let $0 < \varepsilon < \frac{1}{100 k}$ be given. Then, uniformly in $x \geq x_{0}(\varepsilon,k)$, $q \leq x^{3/4}$ and $(a,q) = 1$, we have 
$$
\sum_{\substack{n \sim x \\ n \equiv a \pmod{q} \\ \Omega(n) = k \\ n \not \in \mathcal{S}_{\varepsilon, k , x}}} 1 \ll_{k} \varepsilon \cdot \frac{x}{\varphi(q)} \cdot \frac{(\log\log x)^{k - 1}}{\log x} .
$$
\end{lemma}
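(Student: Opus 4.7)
The strategy is to decompose the ``bad'' set $\{n \sim x : \Omega(n) = k,\ n \equiv a \pmod q,\ n \notin \mathcal{S}_{\varepsilon,k,x}\}$ according to which of the two defining conditions of $\mathcal{S}_{\varepsilon,k,x}$ fails. Writing $n = p_1 \cdots p_k$ with $p_1 \leq \cdots \leq p_k$, every such $n$ falls into at least one of three classes: (A) $p_1 < y$, where $y := \exp((\log x)^{\varepsilon^2})$; (B) $p_1 > c_0 x^{\varepsilon}$ (all prime factors large); and (C) $p_1 \in [y, c_0 x^{\varepsilon}]$ but some dyadic interval $[y 2^j, y 2^{j+1})$ with $0 \leq j < J_0$ contains two of the $p_i$. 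I will bound the contribution of each class separately by $\ll_k \varepsilon \cdot \frac{x}{\varphi(q)} \cdot \frac{(\log\log x)^{k-1}}{\log x}$ once $x \geq x_0(\varepsilon, k)$.

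For case (A), I would write $n = p m$ with $p < y$ prime; since $(a,q) = 1$ forces $(p,q) = 1$, the variable $m$ lies in the fixed residue class $a \overline{p} \pmod q$, satisfies $\Omega(m) = k-1$, and is bounded by $2x/p$. Invoking a Selberg-type upper bound for $(k-1)$-almost primes in arithmetic progressions,
$$
\#\{m \leq Y : \Omega(m) = k-1,\ m \equiv b \pmod q\} \ll_k \frac{Y (\log\log Y + O(1))^{k-2}}{\varphi(q) \log Y}
$$
(valid for $(b,q) = 1$ and $q \leq Y^{1-\delta}$) with $Y = 2x/p$, and then summing over $p < y$ with Mertens' estimate $\sum_{p < y} 1/p = \varepsilon^{2} \log\log x + O(1)$, yields a bound of the shape $(\varepsilon^{2} \log\log x + O(1)) \cdot \frac{x (\log\log x)^{k-2}}{\varphi(q) \log x}$, which is $\leq \varepsilon$ times the target for $x$ sufficiently large.

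For case (B), the Selberg sieve applied to $k$-almost primes in arithmetic progressions with the extra restriction that the smallest prime factor exceeds $c_0 x^{\varepsilon}$ gives
$$
\#(\mathrm{B}) \ll_k \frac{x}{\varphi(q) \log x} \cdot \bigl(\log(1/\varepsilon) + O_k(1)\bigr)^{k-1},
$$
and since $\varepsilon$ is fixed while $\log\log x \to \infty$, the ratio $(\log(1/\varepsilon))^{k-1} / (\log\log x)^{k-1}$ is $o(1)$, hence $\leq \varepsilon$ once $x \geq x_0(\varepsilon, k)$. Case (C) is empty when $k = 2$, because both small primes would be $\leq c_0 x^{\varepsilon}$ and therefore $n < c_0^{2} x^{2\varepsilon} < x$ under the standing assumption $\varepsilon < 1/(100k)$. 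For $k \geq 3$, I would write $n = p p' m$ with $p \neq p'$ in some $[y 2^\ell, y 2^{\ell+1})$ and $\Omega(m) = k-2$, apply the sieve bound for $(k-2)$-almost primes in APs to the $m$-variable, sum over the two small primes using Mertens to get $\sum_{p \in [y 2^\ell, y 2^{\ell+1})} 1/p \ll 1/\log(y 2^\ell)$, and finally bound $\sum_{\ell = 0}^{J_0} (\log(y 2^\ell))^{-2} \ll 1/\log y = (\log x)^{-\varepsilon^2}$ by comparison with an integral; this makes (C) smaller than the target by a factor tending to zero.

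The main obstacle is securing the Selberg-type upper bounds for $j$-almost primes in arithmetic progressions (with and without a lower bound on the smallest prime factor), uniformly in $q \leq x^{3/4}$. These are essentially classical, derivable from the Selberg upper bound sieve or by invoking Shiu's theorem (Lemma~\ref{le:shiu}) with a suitable multiplicative majorant of $\mathbf{1}_{\Omega(n) = j}$, but one must carefully track the dependence on $k$ so that the $\varepsilon$-saving in each of the three cases is preserved.
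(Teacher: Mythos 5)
Your case decomposition --- (A) $p_1 < y$, (B) $p_1 > c_0 x^{\varepsilon}$, (C) a repeated prime in some dyadic interval $[y2^{\ell}, y2^{\ell+1})$ --- coincides exactly with the paper's, and the plan of bounding each piece against the target $\varepsilon \cdot x\,\varphi(q)^{-1} (\log\log x)^{k-1} / \log x$ is sound, as is your observation that case (C) is vacuous when $k=2$. However, there is a genuine gap in how you propose to handle case (A), and it sits precisely at the step that must produce the $\varepsilon$-saving.

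In case (A) you factor $n = pm$ with $p < y$ and invoke, as a black box, the sharp bound
\begin{equation*}
\#\{m \leq Y : \Omega(m) = k-1,\ m \equiv b \pmod q\} \ll_k \frac{Y\,(\log\log Y)^{k-2}}{\varphi(q)\,\log Y}.
\end{equation*}
The exponent $k-2$ is indispensable: after you sum over $p < y$ via $\sum_{p<y}1/p = \varepsilon^2 \log\log x + O(1)$, the product $\varepsilon^2 (\log\log x) \cdot (\log\log x)^{k-2}$ delivers exactly $\varepsilon^2 (\log\log x)^{k-1}$, but if the exponent were $k-1$ you would instead obtain $\varepsilon^2 (\log\log x)^{k}$, which exceeds the target by the unbounded factor $\varepsilon \log\log x$. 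Now, one of your two proposed derivations --- Shiu's theorem (Lemma~\ref{le:shiu}) applied to a multiplicative majorant of $\mathbf{1}_{\Omega(m)=k-1}$ --- does \emph{not} produce the exponent $k-2$. Taking the natural majorant $z^{\Omega(m)-(k-1)}$ with $0 < z < 1$ and optimising at $z \approx (k-1)/\log\log Y$ gives $\ll \frac{Y}{\varphi(q)\log Y}(\log\log Y)^{k-1}\sqrt{k-1}/(k-1)!$, i.e.\ the exponent $k-1$, off by precisely the fatal factor $\log\log Y$. Your other proposed route (isolating the largest prime factor and applying Brun--Titchmarsh/Selberg iteratively) can give the correct $k-2$, but establishing uniformity in $q \leq Y^{1-\delta}$ is delicate: for the $m'$ with $m'q$ close to $Y$ the Brun--Titchmarsh step degenerates and those terms must be treated separately, which you do not address.

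The paper circumvents this entirely by using a finer factorization in case (A): it writes $n = pbc$ with $p < y$, $b$ the part of $n/p$ whose prime factors are $\leq c_0 x^{\varepsilon}$, and $c$ the part whose prime factors are $> c_0 x^{\varepsilon}$, and then observes that $c > 1$ (since $pb \leq y\,(c_0 x^{\varepsilon})^{k-1} < x$ under $\varepsilon < 1/(100k)$). Hence $\Omega(b) \leq k-2$ automatically, so the elementary Mertens-type estimate $\sum_{\Omega(b)\leq k-2,\ b\leq z} 1/b \ll (\log\log z)^{k-2}$ supplies the needed exponent, while the $c$-sum is controlled by the fundamental lemma upper-bound sieve for integers free of prime factors $\leq c_0 x^{\varepsilon}$ in an arithmetic progression --- an estimate that is both elementary and easily uniform for $q \leq x^{3/4}$ in the relevant ranges. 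No almost-prime-in-AP input is used there at all. You should either adopt this $pbc$ decomposition, or commit to a careful proof of the sharp almost-prime bound uniformly for $q$ up to a fixed power of $Y$; as written, your case (A) is not complete.
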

\begin{proof}
If $x < n \leq 2x$ satisfies $\Omega(n) = k$ and $n \not \in \mathcal{S}_{\varepsilon, k, x}$ then either all of the prime factors of $n$ are larger than $c_0 x^{\varepsilon}$ \textbf{or} $n$ has a prime factor smaller than $y$ in which case it can be written as $n = p b c$ with $p < y$, $\Omega(b) \leq k - 1$ with $p | b \implies p \leq c_0 x^{\varepsilon}$ and $p | c \implies p > c_0 x^{\varepsilon}$ \textbf{or} there exists an $ 0 \leq \ell < J_0$ for which there exists two $i < j$ with $p_i, p_j \in [y 2^{\ell}, y 2^{\ell + 1})$. 

Therefore by the triangle inequality, 
$$
\sum_{\substack{n \sim x \\ \Omega(n) = k \\ n \not \in \mathcal{S}_{\varepsilon, k, x} \\ n \equiv a \pmod{q}}} 1 \ll \sum_{\substack{n \sim x \\ p | n \implies p > c_0 x^{\varepsilon} \\ n \equiv a \pmod{q}}} 1 + \sum_{\substack{p b c \sim x , p \leq y \\  p b c \equiv a \pmod{q} \\ \Omega(b) \leq k - 1 \\ p | b \implies p \leq c_0 x^{\varepsilon} \\ p | c \implies p > c_0 x^{\varepsilon}}} 1 + \sum_{\substack{1 \leq i < j \leq k \\ 0 \leq \ell < L_0}} \sum_{\substack{p_1 \ldots p_k \sim x \\ p_i, p_j \in [y 2^{\ell}, y 2^{\ell + 1}] \\ p_1 \ldots p_k \equiv a \pmod{q}}} 1
$$
By the upper bound sieve the contribution of the first sum is
$$
\ll \frac{x}{\varphi(q) \varepsilon} \cdot \frac{1}{\log x}
$$
and therefore acceptable. To deal with the second sum we first remark that we necessarily have $c > 1$ (as a consequence of the sizes of the variables $p$ and $b$). This implies the inequality $\Omega(b) \leq k - 2$. Applying the upper bound sieve again we conclude that the contribution of the second sum is
$$
\ll \sum_{\substack{p \leq y,  b \leq (c_0 x^{\varepsilon})^{k-2} \\ \Omega(b) \leq k - 2 \\ (p b, q) = 1}} \sum_{\substack{p | c \implies p \geq c_0 x^{\varepsilon} \\ c \leq x / (b p) \\ c \equiv a \overline{b p} \pmod{q}}} 1 \ll \frac{x}{\varphi(q) \varepsilon} \cdot \frac{1}{\log x} \sum_{\substack{p \leq y \\ \Omega(b) \leq k - 2 \\ b \leq (c_0 x^{\varepsilon})^{k-2} }} \frac{1}{b p}
$$
and this in turn is
$$
\ll \varepsilon \cdot \frac{x}{\varphi(q)} \cdot \frac{(\log\log x)^{k - 1}}{\log x}.
$$
Finally the contribution of the third sum is bounded by 
$$
\sum_{\substack{1 \leq i < j \leq k \\ 0 \leq \ell < J_0}} \sum_{p_i, p_j \in [y 2^{\ell}, y 2^{\ell + 1})} \sum_{\substack{n \sim x / (p_i p_j) \\ \Omega(n) = k - 2 \\ n \equiv a \overline{p_i p_j} \pmod{q}}} 1
$$
Since $p_i p_j \ll x^{2 \varepsilon}$ we can use the Brun-Titchmarsh theorem to bound the inner sum over $n$, and thus that when $k \geq 2$ the above is 
\begin{align*}
\ll \frac{x}{\varphi(q)} & \frac{(\log \log x)^{k - 3}}{\log x} \sum_{\substack{1 \leq i < j \leq k \\ 0 \leq \ell < J_0}} \sum_{p_i, p_j \in [y 2^{\ell}, y 2^{\ell + 1})} \frac{1}{p_i p_j} \\ & \ll \frac{x}{\varphi(q)} \cdot \frac{(\log\log x)^{k - 3}}{\log x} \sum_{\substack{1 \leq i < j \leq k \\ 0 \leq \ell < J_0}} \frac{1}{(y 2^{\ell})^2} \cdot \frac{(y 2^{\ell})^2}{(\log (y 2^{\ell}))^2},
\end{align*}
which finally gives the bound,
$$
\ll \frac{x}{\varphi(q)} \cdot \frac{(\log\log x)^{k - 3}}{(\log x)^{1 + \varepsilon^2}}
$$
for any fixed $k \geq 2$. Combining the bounds for the above three sums completes the proof of the Lemma. 

\end{proof}

Thus to conclude the proof of the corollary it remains to show that, for any $A > 0$, 
\begin{equation} \label{eq:bilineartobound}
\sum_{q \sim Q} \Big | \sum_{\substack{n \sim x \\ n \equiv a \pmod{q} \\ \Omega(n) = k \\ n \in \mathcal{S}_{\varepsilon, k, x} }} 1 - \frac{1}{\varphi(q)} \sum_{\substack{n \sim x \\ (n,q) = 1 \\ \Omega(n) = k \\ n \in \mathcal{S}_{\varepsilon, k, x}}} 1 \Big | \ll_{A} x (\log x)^{-A}. 
\end{equation}
By definition of the set $\mathcal{S}_{\varepsilon, k, x}$ there exists a unique $0 \leq \ell < J_0$ such that we can write $n = p m$ with $p \in [y 2^{\ell}, y 2^{\ell + 1})$ and $p' | m \implies p' \geq y 2^{\ell + 1}$. Therefore we bound the left-hand side of \eqref{eq:bilineartobound} by
$$
\sum_{0 \leq \ell < J_0} \sum_{0 \leq i \leq 1} \sum_{q \sim Q} \Big | \sum_{\substack{p \sim y 2^{\ell}, m \sim x / ( y 2^{\ell + i}) \\ p m \sim x \\ p m \equiv a \pmod{q} \\ p' | m \implies p' \geq y 2^{\ell + 1}}} 1 - \frac{1}{\varphi(q)} \sum_{\substack{p \sim y 2^{\ell},  m \sim x / (y 2^{\ell + i}) \\ p m \sim x \\ (pm , q) = 1 \\ p' | m \implies p' \geq y2^{\ell + 1}}} 1 \Big | 
$$
Since the sequence of prime satisfies the Siegel-Walfisz condition (by the Siegel-Walfisz theorem!) it follows from Corollary \ref{cor:Main}$i)$ that the above expression is $\ll_{A} x (\log x)^{-A}$ for any fixed $A > 0$. 

Combining both estimates we conclude that for each $0 < \varepsilon < \frac{1}{100 k}$, there exists an $x_0(\varepsilon, k)$ such that for all $x > x_{0}(\varepsilon, k)$ we have, 
$$
\sum_{\substack{q \sim Q \\ (q,a) = 1}} \Big | \sum_{\substack{n \sim x \\ n \equiv a \pmod{q} \\ \Omega(n) = k }} 1 - \frac{1}{\varphi(q)} \sum_{\substack{n \sim x \\ (n,q) = 1 \\ \Omega(n) = k }} 1 \Big | \leq C \varepsilon \cdot x \cdot \frac{(\log\log x)^{k - 1}}{(k - 1)! \log x}
$$
with $C > 0$ an absolute constant. This is exactly the definition of being
$$o \Big ( \frac{x (\log\log x)^{k - 1}}{\log x} \Big )$$
and so the corollary follows.

\section{Proof of Corollary \ref{cor:dirichlet}}

The proof of Corollary \ref{cor:dirichlet} relies on divisor switching and the use of our main Corollary \ref{cor:Main} after the divisor switching is accomplished. 

We are interested in bounding,  
$$
\sum_{\substack{q \sim Q \\ (q,a) = 1}} \Big | \sum_{\substack{d b \sim x \\ d \leq z \\ d b \equiv a \pmod{q}}} \lambda_d - \frac{1}{\varphi(q)} \sum_{\substack{d b \sim x \\ d \leq z \\ (db,q) = 1}} \lambda_d \Big |
$$
To proceed further we would like to be able to assume, without loss of generality, that $(d, a) = 1$. This will be useful once we arrive to divisor switching. Notice that we can split the sum according to $\Delta | a$ such that $(a, d) = \Delta$. Notice also that since $(a,q) = 1$ we have $(\Delta, q) = 1$. Consequently we can re-write the above sum as,
$$
\sum_{\Delta | a } \sum_{\substack{q \sim Q \\ (q,a) = 1}} \Big | \sum_{\substack{d b \sim x / \Delta \\ d \leq z / \Delta \\ d b \equiv (a / \Delta) \pmod{q} \\ (d, a / \Delta) = 1}} \lambda_{d \Delta} - \frac{1}{\varphi(q)} \sum_{\substack{d b \sim x / \Delta \\ d \leq z / \Delta \\ (db,q) = 1 \\ (d, a / \Delta) = 1}} \lambda_{d \Delta} \Big |
$$
We split the sum over $\Delta$ into two sub-sums, those corresponding to $\Delta \leq x^{\varepsilon^2}$ and those corresponding to $\Delta > x^{\varepsilon^2}$.

We first focus on the part of the sum with $\Delta > x^{\varepsilon^2}$. We use Lemma \ref{dkinarith} to bound the contribution of terms with $x^{\varepsilon^2} \leq \Delta < x Q^{-1 - \varepsilon}$, and the trivial bound $\lambda_d \ll d^{\varepsilon}$ to bound the contribution of terms with $\Delta > x Q^{-1 - \varepsilon}$. This shows that the subsum over $\Delta > x^{\varepsilon^2}$ contributes at most
$$
\ll \sum_{\substack{\Delta | a , q \sim Q \\ x^{\varepsilon^2} \leq \Delta \leq x Q^{-1 - \varepsilon}}} \frac{x \tau_k(\Delta)}{\Delta \varphi(q)} \cdot (\log x)^{k} + \sum_{\substack{\Delta | a \\ \Delta > x Q^{-1 - \varepsilon}}} Q^{1 + \varepsilon} x^{\varepsilon} \ll x^{1 - \varepsilon^2 / 2}  + Q^{1 + \varepsilon} x^{2\varepsilon}
$$
and therefore is acceptable.

On the other hand since for all $1 \leq |a| \leq x$ we have $\sum_{\Delta | a} 1/\Delta \ll \log x$ to deal with the contribution of terms with $\Delta \leq x^{\varepsilon^2}$ it's enough to show that,
\begin{equation} \label{eq:extra}
\sum_{\substack{q \sim Q \\ (q,a) = 1}} \Big | \sum_{\substack{d b \sim x \\ d \leq z \\ d b \equiv a \pmod{q} \\ (d,a) = 1}} \lambda_d - \frac{1}{\varphi(q)} \sum_{\substack{db  \sim x \\ d \leq z \\ (db ,q ) = 1 \\ (d,a) = 1}} \lambda_d \Big | \ll x (\log x)^{-A} 
\end{equation}
for arbitrary coefficients $\lambda_d$ with $|\lambda_d| \leq \tau_k(d)$ and 
uniformly in either \begin{enumerate}
\item $z \leq x^{53/105 - \varepsilon / 2}, x^{1 - \varepsilon / 2} > Q > x^{529 / 630 + \varepsilon / 2}$ and $1 \leq |a| \leq x^{\varepsilon / 1000}$
\item \textbf{or} $z \leq x^{53 / 105 - \varepsilon / 2}$, $x^{1 - \varepsilon / 2} > Q > x^{83 / 90 + \varepsilon / 2}$ and $1 \leq |a| \leq x^{1 - 3 \varepsilon}$
\item \textbf{or} $z \leq x^{1/2 + \delta - \varepsilon / 2}$, $x^{1 - \varepsilon / 2} > Q > x^{(71 + 66 \delta)/72 + \varepsilon / 2}$ and $1 \leq |a| \leq x^{1 - 2 \varepsilon}$ for any fixed $0 < \delta < \frac{1}{66}$. 
\end{enumerate}


We re-write \eqref{eq:extra} as, 
\begin{equation} \label{eq:firstswitch}
\sum_{\substack{q \sim Q \\ (q,a) = 1}} \xi_{q} \Big ( \sum_{\substack{d b \sim x \\ d \leq z \\ d b \equiv a \pmod{q} \\ (d,a) = 1}} \lambda_d - \frac{1}{\varphi(q)} \sum_{\substack{d b \sim x \\ d \leq z \\ (d,a) = 1 \\ (db,q) = 1}} \lambda_d \Big ) 
\end{equation}
with $\xi_{q}$ a sequence of complex number with $|\xi_{q}| = 1$. 

We now evaluate the first term, and express the congruence condition $d b \equiv a \pmod{q}$ as $db = a + q r$ for some $r \in \mathbb{Z}$. This shows that the first term in  \eqref{eq:firstswitch} is
\begin{equation} \label{eq:applythmtoit}
\sum_{\substack{q \sim Q \\ (q,a) = 1}} \xi_{q} \sum_{\substack{d b \sim x \\ d \leq z \\ d b = a + q r \\ (d,a) = 1}} \lambda_d = \sum_{i = 1}^{2} \sum_{\substack{(d,a) = 1 \\ d \leq z}} \lambda_d \sum_{\substack{q \sim Q, r \sim i x / (2 Q) \\ q r \sim x \\ (q,a) = 1 \\ q r \equiv - a \pmod{d}}} \xi_{q} + O(|a| x^{\varepsilon}). 
\end{equation}
Given $C > 0$ notice that the contribution of $d$ for which $|\lambda_{d}| > (\log x)^{C}$ is
$$
\ll x (\log x)^{-C} \sum_{d \leq z} \frac{\tau_k(d)^2}{d}  \ll x (\log x)^{-C + k^2}
$$
This sum over the remaining $d$'s with $|\lambda_d| \leq (\log x)^{C}$ is in a form that is amenable to our main Theorem and indeed applying Corollary \ref{cor:Main}$ii)$ (when $z \leq x^{53/105 - \varepsilon / 2}$ and $Q > x^{83/90 + \varepsilon / 2}$), Corollary \ref{cor:Main}$i)$ (when $z \leq x^{1/2 + \delta -  \varepsilon / 2}$ and $Q > x^{(71 + 66 \delta)/72 + \varepsilon / 2}$) or Corollary \ref{cor:Main}$iii)$ (when $z \leq x^{53/105 - \varepsilon / 2}$ and $Q > x^{529/630 + \varepsilon / 2}$ and $1 \leq |a| \leq x^{\varepsilon / 1000}$) we see that the sum on the right-hand side of \eqref{eq:applythmtoit} restricted to $d$ with $|\lambda_d| < (\log x)^{C}$ is equal to
$$
\sum_{\substack{d \leq z \\ (d,a) = 1 \\ |\lambda_d| \leq (\log x)^{C}}} \lambda_d \cdot \frac{1}{\varphi(d)} \sum_{\substack{q r \sim x \\ q \sim Q \\ (qr,d) = 1 \\ (q,a) = 1}} \xi_{q} + O_{A}(x (\log x)^{C - A} + |a| x^{\varepsilon}) 
$$
As before the contribution of the $d$ with $|\lambda_d| > (\log x)^{C}$ is $\ll x (\log x)^{-C + k^2}$ and so we conclude that the right-hand side of \eqref{eq:applythmtoit} is equal to
\begin{equation} \label{eq:continuing}
\sum_{\substack{d \leq z \\ (d,a) = 1}} \lambda_d \cdot \frac{1}{\varphi(d)} \sum_{\substack{q r \sim x \\ q \sim Q \\ (qr,d) = 1 \\ (q,a) = 1}} \xi_{q} + O_{A,C}(x (\log x)^{C - A} + x (\log x)^{-C + k^2} + |a| x^{\varepsilon}) 
\end{equation}
Choosing $C$ sufficiently large with respect to $k$ and $A$ sufficiently large with respect to $C$ we see that the error term is $\ll_{K} x (\log x)^{-K} + |a| x^{\varepsilon}$ for any $K > 0$. We finally notice that the main term of \eqref{eq:continuing} is equal to 
\begin{equation} \label{eq:firstterm}
\sum_{\substack{ d \leq z \\(d,a) = 1}} \lambda_{d} \cdot \frac{1}{\varphi(d)} \sum_{\substack{q \sim Q \\ (q, a d) = 1}} \xi_{q} \cdot \Big ( \frac{x}{q} \cdot \frac{\varphi(d)}{d} + O(d^{\varepsilon}) \Big ) = x \sum_{\substack{q \sim Q \\ d \leq z \\ (d, a) = 1 \\ (q, a d) = 1}} \frac{\lambda_d  \xi_{q}}{q d} + O(Q x^{\varepsilon}).
\end{equation}
We now evaluate the second term in \eqref{eq:firstswitch}. Similarly to the previous calculation, this is
\begin{align*}
\sum_{\substack{q \sim Q \\ (q,a) = 1}} \xi_{q} \cdot \frac{1}{\varphi(q)} \sum_{\substack{db \sim x \\  d \leq z \\ (db,q) = 1 \\ (d,a) = 1}} \lambda_d & = \sum_{\substack{q \sim Q \\ (q,a) = 1}} \xi_{q} \cdot \frac{1}{\varphi(q)} \sum_{\substack{d \leq z \\ (d, a q) = 1 \\ (d,a) = 1}} \lambda_d \cdot \Big ( \frac{x}{d} \cdot \frac{\varphi(q)}{q} + O(q^{\varepsilon}) \Big ) \\ & = x \sum_{\substack{q \sim Q \\ d \leq z \\ (d,a) = 1 \\ (q, a d) = 1}} \frac{\xi_{q} \lambda_d}{q d} + O(z x^{\varepsilon}).
\end{align*}
Combining the above calculation of the first and second term appearing in \eqref{eq:firstswitch} we conclude that \eqref{eq:firstswitch} is 
$$
\ll_{A} x (\log x)^{-A} + |a| x^{\varepsilon}  + z x^{\varepsilon} + Q x^{\varepsilon}
$$
for any $A > 0$. 
Since $1 \leq |a|,z,Q \leq x^{1 - 2\varepsilon}$ by assumption we conclude that the above is $\ll_{A} x (\log x)^{-A}$ for any $A > 0$ as claimed. 

\section{Proof of Corollary \ref{cor:tit}}

We pick sieve weights $\lambda_{d,z}$ such that, 
$$
\sum_{\substack{d | n \\ d \leq z}} \lambda_{d,z} = \Big ( \sum_{\substack{d | n \\ d \leq \sqrt{z}}} \mu(d) \cdot \Big ( 1 - \frac{\log d}{\log \sqrt{z}} \Big ) \Big )^2
$$
and we set $\lambda_{d,z} = 0$ for $d > z$. Observe that $|\lambda_{d,z}| \leq 3^{\omega(d)}$ where $\omega(n)$ is the number of prime factors of $n$ counted without multiplicity. Throughout $z \leq x^{1 - 2 \varepsilon}$. It is clear from the definition of $\lambda_{d,z}$ that for $n > \sqrt{z}$, 
$$
\mathbf{1}_{n \text{ is prime}} \leq \sum_{\substack{d | n}} \lambda_{d,z}.
$$
Moreover by a classical computation,
$$
\frac{1}{\varphi(q)} \sum_{\substack{n \sim x \\ (n,q) = 1}} \sum_{d | n} \lambda_{d,z} = \frac{(2 + o(1)) x}{\varphi(q) \log z}
$$
uniformly in $q \leq z^{1 - \varepsilon}$ as $x \rightarrow \infty$. 

We choose $z = x^{\frac{53}{105} - \varepsilon}$. It then follows from Corollary \ref{cor:dirichlet}$i)$ that for any $x^{83/90 + \varepsilon} \leq Q \leq x^{1 - 3 \varepsilon}$, for almost all $q \sim Q$ with at most $\ll_{A} Q (\log x)^{-A}$ exceptions we have,
$$
\sum_{\substack{p \sim x \\ p \equiv a \pmod{q}}} 1 \leq \sum_{\substack{n \sim x \\ n \equiv a \pmod{q}}} \Big ( \sum_{d | n} \lambda_{d,z} \Big ) = \frac{1}{\varphi(q)} \sum_{\substack{n \sim x \\ (n,q) = 1}} \Big ( \sum_{d | n} \lambda_{d,z} \Big ) = \frac{(2 + o(1)) x}{\varphi(q) \log z}    
$$
and this yields the claim for any $\theta > \tfrac{83}{90}$. In the regime $\theta \leq \frac{83}{90}$ we obtain a superior result by appealing to the result of Fouvry \cite{FouvryInv}, which however reduces the uniformity in $a$ to $1 \leq |a| \leq (\log x)^{C}$. Finally for the proof of the Remark, that is for $\theta = 1 - \eta$ with $0 < \eta < \frac{1}{66}$ it suffices to choose $z = x^{\frac{17}{33} - \frac{12}{11} \eta - \varepsilon }$ and appeal to Corollary \ref{cor:dirichlet}$ii)$.


\bibliographystyle{plain} \bibliography{FouvryRadzi4}

 \end{document}